\newtheorem{thm}{Theorem}
\newtheorem{prop}[thm]{Proposition}
\newtheorem{cor}[thm]{Corollary}
\newtheorem{lem}[thm]{Lemma}
\newtheorem{defn}{Definition}
\newcommand{\Q}{\mathbb{Q}}
\newcommand{\C}{\mathbb{C}}
\newcommand{\R}{\mathbb{R}}
\newcommand{\Gal}{\textrm{Gal}}
\newcommand{\Z}{\mathbb{Z}}
\newcommand{\mf}{\mathfrak}
\newcommand{\ord}{\textrm{ord}}
\title{An Asymptotic for the Number of Solutions to Linear Equations in Prime Numbers from Specified Chebotarev Classes}
\author{Daniel M. Kane}
\begin{document}
\maketitle

\begin{abstract}
We extend results relating to Vinogradov's three primes Theorem to provide asymptotic estimates for the number of solutions to a given linear equation in three or more prime numbers under the additional constraint that each of the primes involved satisfies specialized Chebotarev conditions.  In particular, we show that such solutions can be expected to exist unless a solution would violate some local constraint.
\end{abstract}

\section{Introduction and Statement of Results}

In 1937 Vinogradov proved that any sufficiently large odd number could be written as the sum of three primes.  In addition, he managed to provide an asymptotic for the number of ways to do so, proving (as stated in Iwaniec-Kowalski (\cite{IK}) Theorem 19.2)

\begin{thm}[Vinogradov]\label{IKThm}
For $N$ a positive integer and $A$ any real number then
\begin{equation}\label{IKThmEqu}
\sum_{n_1+n_2+n_3=N} \Lambda(n_1)\Lambda(n_2)\Lambda(n_3) =  \mathfrak{G}_3(N) N^2 + O(N^2 \log^{-A}(N)),
\end{equation}
where
$$
\mathfrak{G}_3(N)=\frac{1}{2}\prod_{p|N} (1-(p-1)^{-2}) \prod_{p\nmid N} (1+(p-1)^{-3}),
$$
$\Lambda(n)$ is the Von Mangoldt function, and the asymptotic constant in the $O$ depends on $A$.
\end{thm}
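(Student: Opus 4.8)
The plan is to run the Hardy--Littlewood circle method. Write $e(x) = e^{2\pi i x}$ and set
$$S(\alpha) = \sum_{n \le N} \Lambda(n) e(n\alpha).$$
Since $\Lambda$ is supported on integers $\ge 2$, orthogonality of additive characters gives the exact identity
$$\sum_{n_1+n_2+n_3 = N} \Lambda(n_1)\Lambda(n_2)\Lambda(n_3) = \int_0^1 S(\alpha)^3\, e(-N\alpha)\, d\alpha.$$
Fix a constant $C = C(A)$ to be chosen large and put $Q = \log^C N$. By Dirichlet's approximation theorem every $\alpha \in [0,1]$ admits a fraction $a/q$ with $(a,q) = 1$, $q \le N/Q$, and $|\alpha - a/q| \le Q/(qN)$; call $\alpha$ a major-arc point ($\alpha \in \mathfrak{M}$) if this holds with $q \le Q$, and a minor-arc point ($\alpha \in \mathfrak{m}$) otherwise, so that on $\mathfrak{m}$ one has $Q < q \le N/Q$. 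Then the left side of \eqref{IKThmEqu} splits as $\int_{\mathfrak{M}} + \int_{\mathfrak{m}}$.

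On the major arcs, writing $\alpha = a/q + \beta$ with $|\beta| \le Q/N$, the Siegel--Walfisz theorem (the prime number theorem in arithmetic progressions, uniform for moduli up to a fixed power of $\log N$) gives
$$S(a/q + \beta) = \frac{\mu(q)}{\phi(q)}\, T(\beta) + O\!\left(N \exp(-c\sqrt{\log N})\right), \qquad T(\beta) = \sum_{n \le N} e(n\beta).$$
Cubing, summing over $a$ (which produces the Ramanujan sum $\sum_{(a,q)=1} e(-Na/q) = c_q(N)$), and extending the sum over $q$ to infinity at the cost of an admissible error, the major-arc contribution becomes
$$\left(\sum_{q=1}^\infty \frac{\mu(q)\, c_q(N)}{\phi(q)^3}\right) \int_{|\beta|\le Q/N} T(\beta)^3\, e(-N\beta)\, d\beta + O(N^2 \log^{-A} N).$$
Here the truncated integral counts ordered triples of positive integers summing to $N$ up to a negligible tail from $|\beta| > Q/N$, hence equals $\tfrac12 N^2 + O(N^2\log^{-A}N)$; and the $q$-sum is multiplicative, factoring (via $c_p(N) = p-1$ if $p \mid N$ and $c_p(N) = -1$ otherwise) as the Euler product $\prod_{p\mid N}(1 - (p-1)^{-2})\prod_{p\nmid N}(1 + (p-1)^{-3})$. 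Multiplying, the major arcs contribute $\mathfrak{G}_3(N) N^2 + O(N^2 \log^{-A} N)$.

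For the minor arcs the aim is the uniform bound
$$\sup_{\alpha \in \mathfrak{m}} |S(\alpha)| \ll N \log^{-B} N$$
for arbitrarily large $B$, which is what forces $C$ to be taken large in terms of $B$. Granting it, the trivial $L^2$ identity $\int_0^1 |S(\alpha)|^2\, d\alpha = \sum_{n \le N} \Lambda(n)^2 \ll N \log N$ gives
$$\left| \int_{\mathfrak{m}} S(\alpha)^3\, e(-N\alpha)\, d\alpha \right| \le \Big(\sup_{\alpha \in \mathfrak{m}} |S(\alpha)|\Big) \int_0^1 |S(\alpha)|^2\, d\alpha \ll N^2 \log^{1-B} N,$$
which is $O(N^2 \log^{-A} N)$ upon taking $B = A + 1$. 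The main obstacle is exactly this minor-arc supremum bound: it is Vinogradov's estimate for exponential sums over primes, and I would derive it from Vaughan's identity, which writes $\Lambda$ restricted to $[1,N]$ as a bounded linear combination of ``Type I'' sums $\sum_{m \le M} a_m \sum_n e(mn\alpha)$ and ``Type II'' bilinear sums $\sum_m \sum_n a_m b_n\, e(mn\alpha)$ with coefficients of controlled size. Each piece is then estimated using the rational approximation $|\alpha - a/q| \le 1/q^2$ with $\log^C N < q \le N/\log^C N$ available on $\mathfrak{m}$, together with standard divisor bounds and a Cauchy--Schwarz argument for the bilinear forms. Once this estimate is in hand, the major-arc analysis and the bookkeeping with Siegel--Walfisz and the Euler product are routine.
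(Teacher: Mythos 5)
Your proposal is correct, but it is not the route the paper follows. The paper cites this result from Iwaniec--Kowalski and, in Section~2.1, summarizes their proof, which is the scaffolding the rest of the paper generalizes. You partition the domain of integration $[0,1]$ into major and minor arcs, approximate $S(\alpha)$ on the major arcs via Siegel--Walfisz, and bound $\sup_{\mathfrak m}|S(\alpha)|$ via Vaughan's identity on the minor arcs. Iwaniec--Kowalski instead decompose the \emph{arithmetic function}: they write $\Lambda = \Lambda^\sharp + \Lambda^\flat$, with $\Lambda^\sharp$ a sieve-theoretic approximant (an averaged truncation of $\Lambda = -\mu * \log$, morally inclusion--exclusion over small primes), expand the cube, compute $\sum_{n_1+n_2+n_3=N}\Lambda^\sharp(n_1)\Lambda^\sharp(n_2)\Lambda^\sharp(n_3)$ directly by elementary counting to obtain $\mathfrak G_3(N)N^2$, and show that every cross term is small by a Parseval argument using an $L^\infty$ bound on the Fourier transform of $\Lambda^\flat$ (proved via IK Theorem~13.10 and its ingredients). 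The major/minor-arc dichotomy reappears, but \emph{inside} the proof of that single $L^\infty$ bound rather than as a partition of the outer integral. Both routes are valid and of comparable depth; what the IK organization buys --- and the reason this paper adopts it --- is that the main term emerges from the sieve approximant by direct combinatorics rather than by carefully matching a truncated singular series to a major-arc approximation, and this cleaner bookkeeping is what lets the paper swap $\Lambda^\sharp$ for the more complicated local approximant $\Lambda_{K,C}\cdot\Lambda_z$ in the Chebotarev setting. A small caution on your sketch: when you extend the $q$-sum to infinity and separate the singular integral, the error control needs to be done uniformly in $N$; this is standard but should be noted, since it is precisely the place where the Siegel-zero issue (handled in IK Theorem~5.28 and generalized here in Lemma~\ref{SiegelZerosLem}) enters.
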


It is easy to see that the contribution to the left hand side of Equation \ref{IKThmEqu} coming from one of the $n_i$ a power of prime is negligible, and thus this side of the equation may be replaced by a sum over triples $p_1,p_2,p_3$ of primes that sum to $N$ of $\log(p_1)\log(p_2)\log(p_3)$.  This implies that any sufficiently large odd number can be written as a sum of three primes since $\mathfrak{G}_3(N)$ is bounded below by a constant for $N$ odd.

It should also be noted that the main term, $N^2\mathfrak{G}_3(N)$ can be written as
$$
C_\infty \prod_p C_p,
$$
where
$$
C_\infty = \frac{N^2}{2}
$$
and
$$
C_p = \begin{cases} (1-(p-1)^{-2}) & \textrm{ if } p|N \\ (1+(p-1)^{-3}) & \textrm{ else} \end{cases}.
$$

When written this way, there is a reasonable heuristic explanation for Theorem \ref{IKThm}.  To begin with, the Prime Number Theorem says that the Von Mangoldt function, is approximated by the distribution assigning 1 to each positive integer. The term $C_\infty$ provides an approximation to the number of solutions based on this heuristic.  The $C_p$ can be thought of as corrections to this heuristic. They can be thought of as local contributions coming from congruential information about the primes $p_i$. In particular, $C_p$ can easily be seen to be equal to
$$
\frac{p\#\left\{(n_1,n_2,n_3)\in \left((\Z/p\Z)^*\right)^3: n_1+n_2+n_3 \equiv N \pmod{p} \right\}}{(p-1)^3}.
$$
This can be thought of as a correction factor coming from the fact that no prime (except for $p$) is a multiple of $p$. In particular, $C_p$ is equal to the ratio of the probability that three randomly chosen elements of $(\Z/p\Z)^*$ sum to $N$ modulo $p$ to the probability that three randomly chosen elements of $(\Z/p\Z)^+$ sum to $N$ modulo $p$.

A number of generalizations of Vinogradov's Theorem have since been proven.  Some, such as Zhan in \cite{SmallPrime} and \cite{ClosePrimeSum}, deal with restrictions on the relative sizes of the primes involved.  In particular, \cite{SmallPrime} shows that one of the primes can be taken to be as small as $N^{7/120+\epsilon}$.  It is shown in \cite{ClosePrimeSum} that the primes involved can all be taken to be relatively close to each other.

The problem was generalized to number fields by Tuljaganova in \cite{PrimesInFields2} and later by Noda in \cite{PrimesInFields1}, who ask which elements of the ring of integers can be written as a sum of generators of principle prime ideals.

Several more papers deal with problems where the primes involved are required to be taken from specified subsets of the set of all prime numbers.  In \cite{DensityGoldbach} Li and Pan show that for any three sets of prime numbers with sufficient density that any sufficiently large odd $N$ can be written as a sum of primes, one from each set.  In particular, they show that for any three sets of primes with relative densities $\delta_1,\delta_2$ and $\delta_3$ within the set of all primes so that $\delta_1+\delta_2+\delta_3>2$ that any sufficiently large odd number can be written as a sum of one prime from each set. It is not hard to see that the bound of 2 above is necessary.

A number of authors including Zulauf (\cite{PrimesInProgressionsSums0}), Liu and Zhan (\cite{PrimesInProgressionsOneHalf}), Halupczok (\cite{PrimesInProgressionsSums}) and Meng (\cite{PrimesInProgressionsSums2}) deal with the case where the primes involved are required to be taken from specified arithmetic progressions.

In this paper, we prove a new generalization of Theorem \ref{IKThm}, counting solutions to similar equations where in addition the primes $p_i$ are required to lie in specified Chebotarev classes.  In particular, after fixing Galois extensions $K_i/\Q$ and conjugacy classes $C_i$ of $\Gal(K_i/\Q)$, we find an asymptotic for the sum of $\prod_{i=1}^k \log(p_i)$ over primes $p_1,\ldots,p_k\leq X$ so that $[K_i/\Q, p_i] = C_i$ for each $i$ and $\sum_{i=1}^k a_i p_i =N$.  Note that the results on writing $N$ as a sum of primes from arithmetic progressions, will follow as a special case of this when $K_i$ is abelian over $\Q$ (although our bounds are probably worse).  In particular we prove:

\begin{thm}\label{mainThm}
Let $k\geq 3$ be an integer.  Let $K_i/\Q$ be finite Galois extensions $(1\leq i \leq k)$ and $G_i=\Gal(K_i/\Q)$.  Let $a_1,\ldots,a_k$ be non-zero integers with no common divisor.  Let $C_i$ be a conjugacy class of $G_i$ for each $i$.  Let $K_i^a$ be the maximal abelian extension of $\Q$ contained in $K_i$, and let $D_i$ be its discriminant.  Let $D$ be the least common multiple of the $D_i$.  Let $H_i^0$ be the subgroup of $(\Z/D\Z)^*$ corresponding to $K_i^a$ via global class field theory.  Let $H_i$ be the coset of $H_i^0$ corresponding to the projection of an element of $C_i$ to $\Gal(K_i^a/\Q)$.  Additionally let $N$ be an integer and let $A$ and $X$ be positive numbers, then
\begin{equation}\label{asymptEqu}
\sum_{\substack{p_i \leq X \\ [K_i/\Q, p_i] = C_i \\ \sum_i a_i p_i = N}} \prod_{i=1}^k \log(p_i) =
\left(\prod_{i=1}^k \frac{|C_i|}{|G_i|} \right)C_\infty C_{D} \left(\prod_{p \nmid D} C_p\right) + O\left(X^{k-1} \log^{-A}(X)\right),
\end{equation}
where the sum of the right hand side is over sets of prime numbers $p_1,\ldots,p_k\leq X$ so that $\sum_{i=1}^k a_i p_i = N$, and so that the Artin symbol $[K_i/\Q, p_i]$ lands in the conjugacy class $C_i$ of $G_i$ for all $1\leq i\leq k$.  On the right hand side,
$$
C_\infty = \frac{1}{\sum_{i=1}^k a_i^2}\int_{\substack{x_i\in [0,X] \\ \sum_i a_i x_i = N}} \left(\sum_{i=1}^k a_i \frac{\partial}{\partial x_i} \right)dx_1 \wedge dx_2 \wedge \ldots \wedge dx_k,
$$
$$
C_D = D\left(\frac{\#\{(x_i)\in \left((\Z/D\Z)^*\right)^k : x_i \in H_i, \sum_{i=1}^k a_i x_i \equiv N \pmod D \}}{\prod_{i=1}^k |H_i|}\right),
$$
and the second product is over primes $p$ not dividing $D$ of
$$
C_p = p\left( \frac{\#\{(x_i) \in \left((\Z/p\Z)^*\right)^k:  \sum_{i=1}^k a_i x_i \equiv N \pmod p \}}{(p-1)^k}\right).
$$
The implied constant in the $O$ term may depend on $k,K_i,C_i,a_i,$ and $A$, but not on $X$ or $N$.  Additionally, if $k=2$ and $K_i,C_i,a_i,A,X$ are fixed, then Equation (\ref{asymptEqu}) holds for all but $O(X\log^{-A}(X))$ values of $N$.
\end{thm}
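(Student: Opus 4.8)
The strategy is the Hardy--Littlewood circle method, preceded by a character--theoretic step that turns the Chebotarev conditions into a sum over tuples of irreducible representations: the abelian pieces will build the main term and the non--abelian pieces will be swept into the error. Fix $i$. By column orthogonality of characters,
\[
\mathbf{1}\big[[K_i/\Q,p]=C_i\big]=\frac{|C_i|}{|G_i|}\sum_{\rho\ \mathrm{irred.}}\overline{\chi_\rho(C_i)}\,\chi_\rho(\mathrm{Frob}_p)
\]
for $p$ unramified in $K_i$. Let $\psi_\rho$ be defined by $-L'(s,\rho)/L(s,\rho)=\sum_n\psi_\rho(n)n^{-s}$, so that $\psi_\rho(p)=\chi_\rho(\mathrm{Frob}_p)\log p$ at unramified $p$ and $|\psi_\rho(p^j)|\le(\dim\rho)\log p$ in general; the proper prime powers, and the finitely many primes dividing $D$ or ramifying in some $K_i$, contribute $O(X^{k-3/2}\log^{k}X)$ to everything below (via the $L^2$ bounds of the third paragraph) and are dropped. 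With $S_{i,\rho}(\alpha):=\sum_{p\le X}\psi_\rho(p)e(a_ip\alpha)$, the left side of (\ref{asymptEqu}) is, up to that negligible error,
\[
\int_0^1\prod_{i=1}^k\left(\frac{|C_i|}{|G_i|}\sum_{\rho\ \mathrm{irred.}}\overline{\chi_\rho(C_i)}\,S_{i,\rho}(\alpha)\right)e(-N\alpha)\,d\alpha .
\]
Global class field theory identifies the $1$--dimensional $\rho$ of $G_i$ with the Dirichlet characters modulo $D$ trivial on $H_i^0$, with $\chi_\rho(\mathrm{Frob}_p)=\chi_\rho(p)$ for $p\nmid D$ and $\overline{\chi_\rho(C_i)}$ the value at the coset $H_i$; call these the \emph{abelian} representations and the rest (all of dimension $\ge2$) the \emph{non--abelian} ones.

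Dissect $[0,1)$ into major arcs $\mathfrak M$ around $a/q$ with $q\le\log^{C}X$ and minor arcs $\mathfrak m$, so that $|\mathfrak M|\ll X^{-1}\log^{2C}X$. Expanding the product, any tuple $(\rho_i)_i$ containing a non--abelian $\rho_i$ contributes only to the error: since $L(s,\rho_i\otimes\chi)$ has no pole at $s=1$ for any Dirichlet character $\chi$ (as $\langle\rho_i\otimes\chi,\mathbf 1\rangle=\langle\rho_i,\overline\chi\rangle=0$ when $\dim\rho_i\ge2$), the Siegel--Walfisz theorem for Artin $L$--functions gives $S_{i,\rho_i}(\alpha)\ll X\log^{-A'}X$ on $\mathfrak M$ (the constant depending on the fixed $K_i$, possibly ineffectively), the other $k-1$ factors are $O(X)$, and $|\mathfrak M|$ is tiny. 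For an all--abelian tuple, carrying out the sum over the abelian $\rho_i$ collapses $\sum_{\rho}\overline{\chi_\rho(C_i)}\chi_\rho(p)$ to $|\Gal(K_i^a/\Q)|\cdot\mathbf 1[p\bmod D\in H_i]$, so the surviving contribution is $\prod_i\frac{|C_i|}{|G_i|}|\Gal(K_i^a/\Q)|$ times the usual circle--method main term for $\sum_{\sum a_ip_i=N,\ p_i\bmod D\in H_i}\prod_i\log p_i$. Evaluating the latter in the standard way --- Gauss sums at $a/q$ reassembling into local densities at the primes dividing $D$, the factor $\prod_{p\nmid D}C_p$ from the remaining primes, and the singular integral $C_\infty$ --- yields $C_\infty\cdot\frac{D\,\#\{(x_i)\in\prod_iH_i:\sum_ia_ix_i\equiv N\bmod D\}}{\varphi(D)^k}\cdot\prod_{p\nmid D}C_p$. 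Since $|H_i|=|H_i^0|=\varphi(D)/|\Gal(K_i^a/\Q)|$, the product of all the constants is exactly $\prod_i\frac{|C_i|}{|G_i|}\,C_\infty C_D\prod_{p\nmid D}C_p$, the claimed main term.

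For the minor arcs (here $k\ge3$ enters) it suffices to bound $\int_{\mathfrak m}\prod_i|S_{i,\rho_i}(\alpha)|\,d\alpha$ for each fixed tuple $(\rho_i)$. Since $a_i\ne0$, Parseval gives $\int_0^1|S_{i,\rho}(\alpha)|^2\,d\alpha=\sum_{p\le X}|\psi_\rho(p)|^2\ll X\log X$, and the Vinogradov--Vaughan method --- run on $\psi_\rho$, whose Dirichlet--series ingredients $L(s,\rho)^{\pm1}$ have divisor--bounded coefficients $\ll d_{\dim\rho}(n)$ --- gives $\sup_{\alpha\in\mathfrak m}|S_{i,\rho}(\alpha)|\ll X\log^{-B}X$. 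Applying the $L^\infty$ bound to one factor, the $L^2$ bound to two further factors, and the trivial bound $O(X)$ to the remaining $k-3$ gives $\int_{\mathfrak m}\prod_i|S_{i,\rho_i}|\ll X\log^{-B}X\cdot X^{k-3}\cdot X\log X=X^{k-1}\log^{1-B}X$; taking $B=A+1$ (which fixes admissible $C$ and $A'$) proves (\ref{asymptEqu}) for $k\ge3$. For $k=2$ one argues in mean square over $N$: with $E(N)$ the difference of the two sides of (\ref{asymptEqu}), Bessel's inequality bounds $\sum_N|E(N)|^2$ by $\int_{\mathfrak M}|S_1S_2-(\text{main term})|^2\ll X^3\log^{-2A'+2C}X$ plus $\int_{\mathfrak m}|S_1S_2|^2\le(\sup_{\mathfrak m}|S_1|)^2\int_0^1|S_2|^2\ll X^3\log^{1-2B}X$; choosing $B,A'$ large enough makes this $\ll X^3\log^{-3A}X$, so the number of $N$ (necessarily with $|N|\ll X$) for which $|E(N)|\ge X\log^{-A}X$ is $\ll X\log^{-A}X$.

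The main obstacle is the minor--arc bound of the third paragraph for the non--abelian $\rho$: one must run Vaughan's identity and estimate the resulting Type I and Type II bilinear sums for the coefficients of an arbitrary Artin $L$--function, keeping the divisor bound $d_{\dim\rho}$ under control (the classical treatment covers only the case of a Dirichlet character twist). The secondary point requiring care is the analytic input on the major arcs --- zero--free regions for the relevant $L(s,\rho\otimes\chi)$ with $q\le\log^C X$, which is the source of the dependence on the $K_i$ and of the possible ineffectivity of the implied constants --- while the remaining bookkeeping is a routine adaptation of the classical proof of Vinogradov's theorem.
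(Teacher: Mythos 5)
Your decomposition --- expanding $\mathbf{1}[[K_i/\Q,p]=C_i]$ over all irreducible representations of $G_i$ and separating abelian from non--abelian pieces --- is a genuinely different route from the paper's. The paper instead fixes $c\in C_i$, passes to $L=K_i^{\langle c\rangle}$, and writes $G_{K_i,C_i}$ as a weighted sum of $F_{L,\chi}$ over Hecke (ray class) characters $\chi$ of the \emph{cyclic} group $\Gal(K_i/L)$, so that everything is expressed in terms of abelian Hecke $L$--functions over $L$ rather than Artin $L$--functions of $G_i$ over $\Q$. On the major arcs the two bookkeepings agree (and your computation of the local constants, collapsing $\sum_\rho\overline{\chi_\rho(C_i)}\chi_\rho(p)$ to $|\Gal(K_i^a/\Q)|\mathbf{1}[p\bmod D\in H_i]$ and assembling $C_\infty C_D\prod_{p\nmid D}C_p$, is exactly what the paper obtains in Section \ref{mainTermSec}), though for the Hecke route the zero--free regions and Siegel--zero bounds are completely standard, whereas for general Artin $L(s,\rho\otimes\chi)$ you are implicitly invoking results that are themselves usually proved by passing to Hecke $L$--functions.

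The flag you raise at the end is a real gap, not a routine obstacle, and you have also misidentified what makes it hard. Running Vaughan's identity on the Dirichlet series $-L'(s,\rho)/L(s,\rho)=(-L')(L^{-1})$ produces ``Type I'' terms whose \emph{inner} (long, unweighted) factor carries the Dirichlet coefficients $a_\rho(n)$ of $L(s,\rho)$ itself, not $1$ or $\log n$ as in the classical $\zeta$ case. Thus on the minor arcs you must bound sums of the shape $\sum_{n\le Y}a_\rho(n)\,e(\beta n)$ for $\dim\rho\ge2$ --- these are not geometric series, and the difficulty is not the divisor bound $d_{\dim\rho}(n)$ at all. The natural way to attack them (via Brauer/cyclic induction) reduces to sums $\sum_{N(\mathfrak a)\le Y}\chi(\mathfrak a)e(\beta N(\mathfrak a))$ over ideals of subfields, and this is precisely the sum the paper isolates as the genuinely new difficulty; the standard Weyl/van der Corput results are too weak when the leading coefficient of $N(\cdot)$ has a small--denominator rational approximant, and the paper proves a new polynomial exponential sum estimate (Lemma \ref{polySumBoundLem}, leading to Lemma \ref{IdealSumLem}) together with the rational--approximation Lemmas \ref{RatAproxMultLem}--\ref{RatApproxStructureLem} specifically to handle it. Your proposal does not supply a substitute for this ingredient, so as written the minor--arc bound for non--abelian $\rho$ does not go through.
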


The introduction of Chebotarev classes leads to two main differences between our asymptotic and the classical one.  For one, the Chebotarev Density Theorem tells us that there are fewer primes in these Chebotarev classes than out of them and causes us to introduce a factor of $\prod_{i=1}^k \left( \frac{|C_i|}{|G_i|}\right)$.  Secondly, Global Class Field Theory tells us that the prime $p_i$ will necessarily lie in the subset $H_i$ of $(\Z/D\Z)^*$, giving us the correction factor $C_D$ rather than $\prod_{p|D}C_p$ to account for required congruence relations that these primes satisfy.

It should be noted that the error term is $o(X^{k-1} \log(X)^{-1})$, whereas if $N$ is bounded away from both the largest and smallest possible values that can be taken by $\sum_{i} a_ix_i$ for $x_i\in [0,X]$, then $C_\infty$ will be on the order of $X^{k-1}$.  For $K_i,C_i$ fixed, the first term on the right hand side is a constant. Although it depends on $N$, $C_D$ will be bounded away from both $0$ and $\infty$ unless $N$ cannot be written as a sum $\sum a_i x_i$ with $x_i\in H_i$. Lastly, for $p\nmid Dn\prod_i a_i$, inclusion-exclusion tells us that $C_p = 1+O(p^{-2})$, and for $p|N$, $p\nmid D\prod_i a_i$, $C_p = 1+O(p^{-1})$.  This means that unless $C_p=0$ for some $p$, $\prod_p C_p$ is within a bounded multiple of $\prod_{p|N}(1+O(p^{-1})) = \exp(O(\log\log\log N)).$  Therefore, unless $C_D=0$, $C_p=0$ for some $p$, or $N$ is near the boundary of the available range, the main term on the right hand side of Equation \eqref{asymptEqu} dominates the error.

\section{Overview}

Our proof will closely mimic the proof in \cite{IK} of Theorem \ref{IKThm}.  We provide a brief overview of the proof given in \cite{IK}, discuss our generalization and provide an outline for the rest of the paper.

\subsection{The Proof of Theorem \ref{IKThm}}

On a very general level, the proof given in \cite{IK} depends on writing
$$
\Lambda = \Lambda^\sharp + \Lambda^\flat.
$$
Here $\Lambda^\sharp$ is a nice approximation to the Von Mangoldt function obtained essentially by sieving out multiples of small primes and $\Lambda^\flat$ is an error term.  It is relatively easy to deal with the sum
$$
\sum_{n_1+n_2+n_3=N}\Lambda^\sharp(n_1)\Lambda^\sharp(n_2)\Lambda^\sharp(n_3),
$$
yielding the main term in Equation \eqref{IKThmEqu}.  This leaves additional terms, each involving at least one $\Lambda^\flat$.  These terms are dealt with by showing that $\Lambda^\flat$ is small in the sense that its generating function has small $L^\infty$ norm.

To prove this bound on $\Lambda^\flat$, Iwaniec and Kowalski make use of Theorem 13.10 of \cite{IK}, which states that for any $A$
$$
\sum_{m\leq x} \mu(m)e^{2\pi i \alpha m} \ll x \log^{-A}(x)
$$
($\mu$ is the M\"{o}bius function) with the implied constant depending only on $A$.  This in turn is proved by considering separately the case where $\alpha$ is near a rational number of small denominator and the case where it is not.

If $\alpha$ is close to a rational number, the sum can be bounded through the use of Dirichlet $L$-functions. In particular, one has bounds on $\sum_{n\leq x} \chi(n)\mu(n)$ for $\chi$ a Dirichlet character (\cite{IK} (5.80)). To prove this, Iwaniec and Kowalski use both Theorem 5.13 of \cite{IK}, which gives bounds on the sums of coefficients of the logarithmic derivative of an $L$-function, and some bounds on zero-free regions and Siegel zeroes.

If $\alpha$ is not well approximated by a rational number with small denominator, an appropriate bound is proved by rewriting the sum using some combinatorial identities (\cite{IK} (13.39)) and using the quadratic form trick.  (The actual bound obtained is given in \cite{IK} Theorem 13.9.)

\subsection{Outline of Our Proof}

Our proof of Theorem \ref{mainThm} is similar in spirit to the proof of Theorem \ref{IKThm} given in \cite{IK}.  We differ in a few ways, some just in the way we choose to organize our information and some from necessary complications due to the increased generality.  We provide below an outline of our proof and a comparison of our techniques to those used in \cite{IK}.

Instead of dealing directly with $\Lambda,\Lambda^\sharp$ and $\Lambda^\flat$ as is done in \cite{IK}, we instead deal directly with their generating functions.  In Section \ref{FGdefSec}, we define $G$, which is our equivalent of the generating function for $\Lambda$.  As it turns out, $G$ is somewhat difficult to deal with directly, so we define a related function $F$, that is better suited for techniques involving Hecke $L$-functions.  In Proposition \ref{GFRelationProp} we prove that we can write $G$ approximately as an appropriate sum of $F$'s.

In Section \ref{LocDefSec} we define $G^\sharp$ and $G^\flat$, which are analogues of the generating functions for $\Lambda^\sharp$ and $\Lambda^\flat$.  We also define analogous $F^\sharp$ and $F^\flat$. The sieving technique that we use to write $G^\sharp$ is not quite analogous to that used in \cite{IK}. Essentially, we write our version of $\Lambda^\sharp$ as a product of local factors. This will produce some sums over smooth numbers later in our analysis, so in Lemma \ref{SmoothNumberLem} we bound the number of smooth numbers, so that we may bound errors coming from sums over them.

We next work on proving that $F^\flat$ has small $L^\infty$ norm (this is somewhat equivalent to \cite{IK} showing that generating functions of $\Lambda^\flat$ or $\mu$ are small).  As in \cite{IK}, we split into two cases based on whether or not we are near a rational number.

In Section \ref{alphaSmoothSec}, we deal with the approximation near rationals.  First, in Section \ref{LFunctSec}, we generalize some necessary results about $L$-functions and Siegel zeroes.  In Section \ref{FSmoothApproxSec}, we use these to produce an approximation of $F$, and in Section \ref{FSharpSmoothApproxSec}, we show that this also approximates $F^\sharp$.

In Section \ref{alphaRoughSec}, we deal with showing that $F^\flat$ is small away from rationals.  It should be noted that while the rest of this paper generalizes the corresponding proof in \cite{IK} in a relatively straightforward way by use of standard results, something new is needed for this Section.  The primary reason for this is that while Vinogradov's bound on exponential sums over prime numbers reduces the sum in question to exponential sums over arithmetic progressions, the analogous argument in our case requires bounding sums of the form $\sum e^{2\pi i \alpha N(\mathfrak{a})}$ over ideals $\mathfrak{a}$ in a number field.  To deal with this issue, we will make use of results about exponential sums of polynomials.  Unfortunately, standard results of this type will not be strong enough when the leading term of the polynomial is approximated by a rational number with relatively small (polylogarithmic) denominator. Thus, we require a new bound of this type which is given by Lemma \ref{polySumBoundLem} below.  In the process of deriving this Lemma, we need some results about when multiples of a number with poor rational approximation have a good rational approximation, which we prove in Section \ref{RatApproxSec}.  In Section \ref{ExpSumsSec}, we use this result to prove bounds on sums of the type described above, and in Section \ref{FRoughBoundSec} use these to obtain the necessary control on $F$. In Section \ref{FSharpRoughSec}, we prove bounds for $F^\sharp$, and thus on $F^\flat$.

In Section \ref{GSec}, we use our bounds on $F^\flat$ to prove bounds on $G^\flat$.  Finally, in Section \ref{proofSec}, we use this bound to prove Theorem \ref{mainThm}.  In Section \ref{ErrorTermSec}, we introduce the appropriate product generating functions, and deal with the terms coming from $G^\flat$'s.  In Section \ref{mainTermSec}, we produce the main term of our Theorem.

Finally, in Section \ref{appSec}, we show an application of our Theorem to constructing elliptic curves whose discriminants split completely over specified number fields.

\section{Preliminaries}\label{prelimSec}

In this Section, we introduce some of the basic terminology and results that will be used throughout the rest of the paper. In Section \ref{NotSec}, we briefly recall some asymptotic notation. In Section \ref{CFTSec}, we recall some of the basic facts from class field theory that will be used later. In Section \ref{FGdefSec}, we define the functions $F$ and $G$ along with some of the basic facts relating them.  In Section \ref{LocDefSec}, we define $F^\sharp$ and $G^\sharp$ along with some related terminology and again prove some basic facts.  Finally, in Section \ref{SmoothSec}, we prove a result on the distribution of smooth numbers that will prove useful to us later.

\subsection{Asymptotic Notation}\label{NotSec}

Throughout we use $O(X)$ to denote a quantity whose absolute value is bounded above by some constant times $X$. Let $\Omega(X)$ denote a positive quantity that is bounded below by some constant times the absolute value of $X$. We use, $\Theta(X)$ will be used to denote a quantity which is both $O(X)$ and $\Omega(X)$. Throughout the paper the implied constants will potentially depend on the number fields $K_i,K,L,$ etc. in question, but upon nothing else unless otherwise stated.

\subsection{Class Field Theory}\label{CFTSec}

Specifying the Artin symbol of a prime will sometimes force congruence conditions on it coming from global class field theory.  In this Section, we review some of the basic facts of this theory that will be needed later. A reader interested in proofs of these results is encouraged to read Milne \cite{milne}. The input that we require from class field theory can be summarized in the following theorem:

\begin{thm}\label{CFTTheorem}
There is a one-to-one correspondence between Galois extensions $K/\Q$ with abelian Galois group and pairs $(H,N)$ where $N$ is a positive integer and $H$ is a subgroup of $(\Z/N\Z)^*$ so that $H$ is not periodic modulo $M$ for any $M$ strictly dividing $N$.  Furthermore, if $K$ is the extension corresponding to this pair $(H,N)$, then $K$ is ramified exactly at the primes dividing $N$ (in fact $N$ divides the discriminant of $K$) and there exists an isomorphism $\varphi:(\Z/N\Z)^*/H \rightarrow \Gal(K/\Q)$ so that for any rational prime $p$ not dividing $N$,
$$
[K/\Q,p] = \varphi(p\pmod{N}).
$$
In particular, if $\chi:\Gal(K/\Q)\rightarrow \C^*$ is a character, then $\chi([K/\Q,p]) = \psi(p)$ for some Dirichlet character $\psi$.

More generally, if $K/L$ is any abelian extension of number fields, and $\chi:\Gal(K/L)\rightarrow \C^*$ is a character, then there exists a Grossencharacter $\psi$ so that for primes $\mathfrak{p}$ relatively prime to the discriminant of $K$, we have $\chi([K/L,\mathfrak{p}])=\psi(\mathfrak{p}).$
\end{thm}

From this theorem, we obtain the following corollaries:

\begin{cor}\label{CFTCor}
Let $K/\Q$ be a Galois field extension.  Let $L\subset K$ be a subfield so that $\Gal(K/L)$ is abelian.  Let $\chi:\Gal(K/L)\rightarrow \C^*$ be a character, corresponding as described in Theorem \ref{CFTTheorem} to a Grossencharacter $\psi$ on $L$.  Then there exists a Dirichlet character $\rho$ so that
$$
\psi(\mathfrak{p}) = \rho(N_{L/\Q}(\mathfrak{p}))
$$
for all primes $\mathfrak{p}$ if and only if $\chi$ can be extended to a character on $\Gal(K/\Q)^{ab}$.
\end{cor}
\begin{proof}
Let $G=\Gal(K/\Q)$.  First we claim that if a prime $\mathfrak{p}$ of $L$ has norm $N_{L/\Q}(\mathfrak{p})=p^n$ for some rational prime $p$, then $[K/L,\mf{p}]$ is conjugate to $[K/\Q,p]^n$. This is because if the prime $\mf{q}$ of $K$ sits over $\mf{p}$, then it also sits over $p$.  If the element $g\in G$ fixes $\mf{q}$ and acts via $p$-power Frobenius on $O_K/\mf{q}$, then $g$ is in the conjugacy class of $[K/\Q,p]$.  On the other hand, $g^n$ is the unique element of $G$ that fixes $\mf{q}$ and acts on the residue field by $p^n$ power Frobenius.  Since $\mf{p}$ has residue field $\mathbb{F}_{p^n}$, this means that $g^n$ is in the conjugacy class of $[K/L,\mf{p}]$.

Suppose that $\chi$ extends to a character of $G^{ab}$, and thus to $G$.  Letting $K^{ab}$ be the maximal abelian subextension of $K$ over $\Q$, $\chi$ gives a character of $\Gal(K^{ab}/\Q)$.  Thus by Theorem \ref{CFTTheorem}, there is a Dirichlet character $\psi$ so that $\psi(p)=\chi([K^{ab}/\Q,p])=\chi([K/\Q,p])$.  We claim that for primes $\mf{p}$ of $L$ that $\chi([K/L,\mf{p}])=\psi(N_{L/\Q}(p))$.  This is because if $N_{L/\Q}(\mf{p})=p^n$ then, by the above,
$$
\chi([K/L,\mf{p}]) = \chi([K/\Q,p]^n) = \chi([K/\Q,p])^n = \psi(p)^n = \psi(p^n) = \psi(N_{L/\Q}(\mf p)).
$$

Next assume that $\chi$ is a character on $\Gal(K/L)$ and $\psi$ is a Dirichlet character so that $\chi([K/L,\mf{p}])=\psi(N_{L/\Q}(\mf{p}))$ for all $\mf{p}$.  Let $M$ be the abelian extension of $\Q$ corresponding via the correspondence in Theorem \ref{CFTTheorem} to the kernel of $\psi$.  Let $K'$ be the compositum of $M$ and $K$.  Let $G'=\Gal(K'/\Q)$.  Let $H=\Gal(K'/K)\subset G'$. By Theorem \ref{CFTTheorem}, $\psi$ corresponds to a character $\chi'$ on $\Gal(M/\Q)$, and thus to a character on $G'$. If $N_{L/\Q}(\mf{p})=p^n$, we have that
$$
\chi([K/L,\mf{p}]) = \psi(p^n) = \psi(p)^n = \chi'([K'/\Q,p])^n = \chi'([K'/\Q,p]^n) = \chi'([K'/L,\mf{p}]).
$$
By the Chebotarev Density Theorem, $[K'/L,\mf{p}]$ can take any possible value in $\Gal(K'/L)$.  Thus, for all $g\in\Gal(K'/L)$, we have that $\chi(g/H) = \chi'(g)$.  Thus $\chi'$ vanishes on $H$.  On the other hand, $\chi'$ is necessarily injective on $\Gal(M/\Q)$, and since this generates $G'$ modulo $G$, this implies that $H$ is trivial. Therefore, we have that $\chi(g)=\chi'(g)$ for $g\in \Gal(K/L)$.  Thus, $\chi'$ is an extension of $\chi$ to $G$ and thus to $G^{ab}$.
\end{proof}

\begin{cor}\label{CFTCor2}
Let $L/\Q$ be a number field and $\chi$ a Dirichlet character on $\Q$. Then $\chi(N_{L/\Q}(\mf{p}))$ is trivial on primes $\mf{p}$ of $L$ not dividing the discriminant of $\Q$, if and only if the abelian extension, $M$, of $\Q$ corresponding to the kernel of $\chi$ is contained in $L$.
\end{cor}
\begin{proof}
Let $K$ be the compositum of $M$ and the Galois closure of $L$. By Theorem \ref{CFTTheorem}, $\chi$ corresponds to some character $\psi$ of $\Gal(M/\Q)$ and thus of $\Gal(K/\Q)$. Let $\mf{p}$ be a prime of $L$ with $N_{L/\Q}(\mf{p})=p^n$. As in the proof of Corollary \ref{CFTCor}, we have that
$$
\chi(N_{L/\Q}(\mf{p}))=\chi(p^n)=\chi(p)^n=\psi([K/\Q,p])^n=\psi([K/\Q,p]^n)=\psi([K/L,\mf{p}]).
$$
By the Chebotarev Density Theorem, $[K/L,\mf{p}]$ can take on any value in $\Gal(K/L)$, and thus $\chi$ vanishes on norms from $L$ if and only if $\psi$ vanishes on this set. On the other hand, by assumption, $\psi$ is injective on $\Gal(M/\Q) = \Gal(K/\Q)/\Gal(K/M)$. Thus the kernel of $\psi$ is exactly $\Gal(K/M)$, and thus $\chi$ vanishes on norms from $L$ if and only if $\Gal(K/L)$ contains $\Gal(K/M)$, or equivalently if and only if $M$ is contained in $L$.
\end{proof}

\subsection{$G$ and $F$}\label{FGdefSec}

We begin with a standard definition:
\begin{defn}
Let $e(x)$ denote the function $e(x) = e^{2\pi i x}$.
\end{defn}

We now define $G$ as the generating function for the set primes $p\leq X$ with $[K/\Q,p]=C$ each weighted by $\log(p).$
\begin{defn}
Suppose that $K/\Q$ is a finite Galois extension with $G=\Gal(K/\Q),$ $C$ a conjugacy class of $G$, and $X$ a positive real number.  We then define the generating function
$$
G_{K,C,X}(\alpha) = \sum_{\substack{p\leq X\\ [K/\Q,p]=C}} \log(p)e(\alpha p).
$$
Where the sum is over primes, $p$, with $p\leq X$ and $[K/\Q,p]=C$.
\end{defn}

As it is a little awkward to deal with $G$ directly, we would rather work with a related function defined in terms of characters.  We first need one auxiliary definition:
\begin{defn}
Let $L/\Q$ be a number field.  Let $\Lambda_L$ be the Von Mangoldt function on ideals of $L$, $\Lambda_L:\{\textrm{Ideals of }L\}\rightarrow \R$ defined by
$$
\Lambda_L(\mf{a}) = \begin{cases} \log(N(\mf{p})) \ & \textrm{if} \ \mf{a} = \mf{p}^n\\ 0 \ & \textrm{otherwise}\end{cases}
$$
which assigns $\log(N(\mf p))$ to a power of a prime ideal $\mf p$, and 0 to ideals that are not powers of primes.
\end{defn}

We now define
\begin{defn}
If $L/\Q$ is a number field, $\xi$ a Grossencharacter of $L$, and $X$ a positive number, define the function
$$
F_{L,\xi,X}(\alpha) = \sum_{N(\mf a) \leq X} \Lambda_L(\mf a) \xi(\mf a) e(\alpha N(\mf a)).
$$
Where the sum above is over ideals $\mf a$ of $L$ with norm at most $X$.
\end{defn}

Notice that the sum in the definition of $F$ is determined up to $O(\sqrt{X})$ by the terms coming from primes $\mf a$ of prime norm.

For both $F$ and $G$, we will often suppress some of the subscripts when they are clear from context.  We now demonstrate the relationship between $F$ and $G$. One may expect them to be related since we can write the characteristic function of a conjugacy class of $G$ as a linear combination of characters induced from cyclic subgroups. The generating functions for these cyclic subgroups will turn out to give copies of $F$.

\begin{prop}\label{GFRelationProp}
Let $K$ and $C$ be as above.  Pick a $c\in C$.  Let $L\subseteq K$ be the fixed field of $c$.  Then we have that
\begin{equation}\label{GFEq}
G_{K,C,X}(\alpha) = \frac{|C|}{|G|}\left(\sum_\chi \overline{\chi}(c)F_{L,\chi,X}(\alpha) \right) + O(\sqrt{X}),
\end{equation}
where the sum is over characters $\chi$ of the subgroup $\langle c\rangle \subset G$, which, by Theorem \ref{CFTTheorem}, can be thought of as characters of $L$.
\end{prop}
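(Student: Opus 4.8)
The plan is to expand each $F_{L,\chi,X}(\alpha)$ over the degree-one primes of $L$ and then pass, via orthogonality of characters and the Frobenius dictionary for the tower $\Q\subseteq L\subseteq K$, to a sum over rational primes $p$ with $[K/\Q,p]=C$. First I would discard the negligible part of $F$. By the remark following its definition, the contribution to $F_{L,\chi,X}(\alpha)$ of the prime powers $\mf p^n$ with $n\geq 2$, of the primes $\mf p$ of residue degree $\geq 2$ over $\Q$, and of the finitely many primes ramified in $K$, is $O(\sqrt X)$: in each case the underlying rational prime is $\leq\sqrt X$, so a Chebyshev-type bound (together with $[L:\Q]=O(1)$) applies. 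Writing $p=N(\mf p)$, this leaves
\[
F_{L,\chi,X}(\alpha)=\sum_{\substack{\mf p\subseteq L,\ f(\mf p/\Q)=1\\ p\leq X,\ p\text{ unramified in }K}}\log(p)\,\chi(\mf p)\,e(\alpha p)+O(\sqrt X).
\]
Since $\langle c\rangle$ has only $|\langle c\rangle|=\ord(c)=O(1)$ characters, I can multiply by $\overline\chi(c)$, sum over $\chi$, and interchange the two finite sums, so that the right-hand side of (\ref{GFEq}), before the error term, equals $\frac{|C|}{|G|}\sum_{\mf p}\log(p)\,e(\alpha p)\left(\sum_\chi\overline\chi(c)\chi(\mf p)\right)$, up to $O(\sqrt X)$.

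Next I would evaluate the inner sum. If $\mf p$ is a degree-one prime of $L$ over an unramified prime $p$ and $\mf P\mid\mf p$ in $K$, then comparing residue degrees ($f(\mf P/p)=f(\mf P/\mf p)$ since $f(\mf p/p)=1$, while $f(\mf P/\mf p)=|D_{\mf P/p}\cap\langle c\rangle|$) forces the decomposition group $D_{\mf P/p}$ to lie inside $\langle c\rangle=\Gal(K/L)$; as $\langle c\rangle$ is abelian, $\sigma_{\mf p}:=\mathrm{Frob}_{\mf P/p}\in\langle c\rangle$ does not depend on the choice of $\mf P\mid\mf p$. Under the class field theory identification of $\chi$ with a Grossencharacter of $L$ we have $\chi(\mf p)=\chi(\sigma_{\mf p})$, so orthogonality of characters of the finite abelian group $\langle c\rangle$ gives $\sum_\chi\overline\chi(c)\chi(\mf p)=|\langle c\rangle|$ when $\sigma_{\mf p}=c$ and $0$ otherwise. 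Hence the main part of the right-hand side of (\ref{GFEq}) becomes $\frac{|C|}{|G|}\,|\langle c\rangle|\sum_{p\leq X}\log(p)\,e(\alpha p)\,n_p+O(\sqrt X)$, where $n_p$ is the number of degree-one primes $\mf p$ of $L$ over $p$ with $\sigma_{\mf p}=c$.

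It remains — and this is the only step requiring genuine care — to show that $n_p=\frac{|G|}{|C|\,|\langle c\rangle|}$ when $[K/\Q,p]=C$ and $n_p=0$ otherwise, after which $\frac{|C|}{|G|}\,|\langle c\rangle|\,n_p$ is exactly the indicator that $[K/\Q,p]=C$, the unramified $p$ with $[K/\Q,p]=C$ contribute $G_{K,C,X}(\alpha)$ up to $O(1)$, and the proof is complete. The key point is that $\mf P\mapsto\mf P\cap L$ is a bijection from $\{\mf P\subseteq K:\mf P\mid p,\ \mathrm{Frob}_{\mf P/p}=c\}$ onto $\{\mf p\subseteq L:\mf p\mid p,\ f(\mf p/\Q)=1,\ \sigma_{\mf p}=c\}$: if $\mathrm{Frob}_{\mf P/p}=c$ then $D_{\mf P/(\mf P\cap L)}=\langle c\rangle=\Gal(K/L)$ is the whole group, so $\mf P$ is the unique prime of $K$ over $\mf P\cap L$, $f(\mf P\cap L/p)=1$, and $\sigma_{\mf P\cap L}=c$; conversely the unique prime of $K$ over any such $\mf p$ has Frobenius $c$. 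Counting the source set is then pure group theory: fixing $\mf P_0\mid p$ with $\mathrm{Frob}_{\mf P_0/p}=\sigma_0\in[K/\Q,p]$, the primes over $p$ are the $\tau\mf P_0$ indexed by $G/\langle\sigma_0\rangle$, with $\mathrm{Frob}_{\tau\mf P_0/p}=\tau\sigma_0\tau^{-1}$; this equals $c$ for some $\tau$ precisely when $c$ is $G$-conjugate to $\sigma_0$, i.e.\ precisely when $[K/\Q,p]=C$, and then the set of such $\tau$ is a union of left $\langle\sigma_0\rangle$-cosets of total cardinality $|Z_G(\sigma_0)|=|G|/|C|$, so (distinct primes corresponding to distinct left $\langle\sigma_0\rangle$-cosets, and $|\langle\sigma_0\rangle|=|\langle c\rangle|$ since $\sigma_0,c$ are conjugate) it yields $|G|/(|C|\,|\langle c\rangle|)$ primes. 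Thus the main obstacle is not analytic at all but this decomposition-group bookkeeping in the tower $\Q\subseteq L\subseteq K$, where it is essential that we track residue-degree-one primes of $L$ rather than rational primes that split completely in $L$; the only analytic ingredient is the elementary prime-power bound of the first step.
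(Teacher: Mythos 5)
Your proof is correct and follows essentially the same route as the paper's: expand $F$ over ideals/primes of $L$, apply orthogonality of characters of $\langle c\rangle$ to isolate the condition $\mathrm{Frob}_{\mf p}=c$, and then do the same decomposition-group bookkeeping in the tower $\Q\subset L\subset K$ to show that each rational prime $p$ with $[K/\Q,p]=C$ is hit exactly $|G|/(|C|\,\ord(c))$ times. The only difference is cosmetic ordering: you truncate to degree-one unramified primes of $L$ before applying orthogonality, while the paper applies orthogonality to the full ideal sum and then discards non-prime-norm terms; the $O(\sqrt X)$ error estimates and the final coset count are the same in both.
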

\begin{proof}
We begin by considering the sum on the right hand side of Equation (\ref{GFEq}). It is equal to
\begin{align*}
\sum_\chi \overline{\chi}(c)F_{L,\chi,X}(\alpha) & =
\sum_\chi \sum_{N(\mf a)\leq X} \Lambda_L(\mf a) \overline{\chi}(c) \chi(\mf a) e(\alpha N(\mf a))\\
& = \sum_{N(\mf a)\leq X}\Lambda_L(\mf a)e(\alpha N(\mf a)) \sum_{\chi} \overline{\chi}(c)\chi([K/L,\mf a])\\
& = \ord(c)\sum_{\substack{N(\mf a)\leq X \\ [K/L,\mf a] = c}}\Lambda_L(\mf a) e(\alpha N(\mf a)).
\end{align*}
Up to an error of $O(\sqrt{X})$, we can ignore the contributions from elements whose norms are powers of primes, because there are $O(\sqrt{X}/\log(X))$ higher powers of primes with norm at most $X$.  Therefore the above equals
$$
\ord(c)\sum_{\substack{N(\mf p)\leq X \\ [K/L,\mf p] = c\\N(\mf p) \ \textrm{is prime}}}\log(N(\mf p)) e(\alpha N(\mf p))+O(\sqrt{X}).
$$

We need to determine now which primes $p\in \Z$ are the norm of an ideal $\mf p$ of $L$ with $[K/L,\mf p] = c$, and for such $p$, how many such $\mf p$ lie over it.  Each such $\mf p$ must have only one prime $\mf q$ of $K$ over it and it must be the case that $[K/\Q,\mf q]=c$.  Hence the $p$ we wish to find are exactly those that have a prime $\mf q$ lying over them with $[K/\Q,\mf q]=c$.  These are exactly the primes $p$ so that $[K/\Q,p]=C$.  Hence the term $e(\alpha n)$ appears in the above sum if and only if $n$ is a prime $p$ with $[K/\Q,p]=C$.  We next need to compute the coefficient of this term.  The coefficient will be $\ord(c)\log(p)$ times the number of primes $\mf p$ of $L$ over $p$ with $[K/\Q,\mf p]=c$.  These primes are in 1-1 correspondence with primes $\mf q$ of $K$ over $p$ with $[K/\Q,\mf q]=c$.  Now for such $p$, there will be $\frac{|G|}{\ord(c)}$ primes of $K$ over it, and $\frac{|G|}{|C|\ord(c)}$ of them will have the correct Artin symbol.  Hence the coefficient of $e(\alpha p)$ for such $p$ will be exactly $\frac{|G|}{|C|}\log(p)$.  Therefore the sum on the right hand side of Equation (\ref{GFEq}) is
$$
\frac{|G|}{|C|}\sum_{\substack{p\leq X\\ [K/\Q,p]=C}}\log(p)e(\alpha p) + O(\sqrt{X}).
$$
Multiplying by $\frac{|C|}{|G|}$ completes the proof of the Proposition.
\end{proof}

\subsection{Local Approximations}\label{LocDefSec}

Here we define some simpler functions meant to approximate $F$ and $G$.  In order to do so we will need a number of auxiliary definitions:
\begin{defn}
For $p$ a prime let
$$
\Lambda_p(n) = \begin{cases} 0 \ \ \ \ \ \ \textrm{if} \ p|n \\ \frac{1}{1-p^{-1}} \ \ \textrm{else}  \end{cases}.
$$
\end{defn}
$\Lambda_p$ can be thought of as a local approximation to the Von Mangoldt function, based only on the residue of $n$ modulo $p$.  Putting these functions together we get
\begin{defn}
Let $z$ be a positive real.  Define a function $\Lambda_z$ by
$$
\Lambda_z(n) = \prod_{p< z} \Lambda_p(n) = \begin{cases} 0 \ & \textrm{if} \ p|n \ \textrm{for some prime} \ p<z \\ \prod_{p<z} \frac{1}{1-p^{-1}} \ &\textrm{otherwise}\end{cases}.
$$
\end{defn}
Note that by slight abuse of notation we have already defined several functions denoted by $\Lambda$ with some subscript.  We will disambiguate these by context and by consistently using subscripts either the same as or nearly identical to those used in the original definition (so $\Lambda_z$ will always use $z$ as its subscript, even though this represents a variable).

There are also some related definitions which will prove useful later.
\begin{defn}
Let
$$
C(z) = \prod_{p < z} \frac{1}{1-p^{-1}}.
$$
$$
P(z) = \prod_{p < z} p.
$$
$$
P(z,q) = \prod_{p < z, p\nmid q} p.
$$
\end{defn}
We note that
$$
\Lambda_z(n) = C(z)\sum_{d|(n,P(z))} \mu(d),
$$
that
$$
\Lambda_z(n) = C(z)\sum_{d|(n,P(z,q))} \mu(d)\cdot \left(\begin{cases} 1 \ & \textrm{if} \ (n,q)=1\\ 0 \ & \textrm{otherwise} \end{cases}\right),
$$
and that
$$
C(z) = \Theta(\log(z)).
$$

We will need some other local contributions to the Von Mangoldt function to take into account splitting information.  In particular we define:
\begin{defn}
Let $K/\Q$ be a Galois extension and $C\subset G = \Gal(K/\Q)$ a conjugacy class of the Galois group. Let the image of $C$ in $G^{ab}$ correspond via Theorem \ref{CFTTheorem} to a coset $H$ of some subgroup of $(\Z/D_K\Z)^*$ for $D_K$ the discriminant of $K$.  We define $\Lambda_{K,C}$ to be the arithmetic function:
$$
\Lambda_{K,C}(n) = \begin{cases} \frac{\phi(D_K)}{|H|} \ & \textrm{if} \ n \in H\\ 0 \ & \textrm{otherwise}\end{cases}.
$$
\end{defn}
This accounts for the congruence conditions implied by $n$ being a prime with Artin symbol $C$.
\begin{defn}
Let $L$ be a number field. Let $L'$ be its maximal abelian subextension.  By Theorem \ref{CFTTheorem}, this corresponds to a subgroup $H_L$ of $(\Z/D_L\Z)^*$ for $D_L$ the discriminant of $L$. Let
$$
\Lambda_{L/\Q}(n) = \begin{cases} \frac{\phi(D_L)}{|H_L|} \ & \textrm{if} \ n\in H_L \\ 0 \ &\textrm{otherwise}\end{cases}.
$$
\end{defn}
$\Lambda_{L/\Q}$ accounts for the congruence conditions that are implied by being a norm from $L$ down to $\Q$.

We are now prepared to define our approximations $F^\sharp$ and $G^\sharp$ to $F$ and $G$.
\begin{defn}
For $K/\Q$ Galois, $C$ a conjugacy class in $\Gal(K/\Q)$, and $z$ and $X$ positive real numbers, we define the generating function
$$
G_{K,C,X,z}^\sharp(\alpha) = \frac{|C|}{|G|}\sum_{n\leq X} \Lambda_{K,C}(n)\Lambda_z(n)e(\alpha n).
$$
We also let
$$
G_{K,C,X,z}^\flat(\alpha) = G_{K,C,X}(\alpha) - G_{K,C,X,z}^\sharp(\alpha).
$$
\end{defn}
\begin{defn}
For $L/\Q$ a number field, $\xi$ a Grossencharacter of $L$, $X$ and $z$ positive numbers, we define the function
$$
F_{L,\xi,X,z}^\sharp(\alpha) =
\begin{cases}
\sum_{n\leq X} \Lambda_{L/\Q}(n) \Lambda_z(n) \chi(n) e(\alpha n) \ & \textrm{if} \ \xi = \chi\circ N_{L/\Q} \ \textrm{for some  character} \ \chi \\
0 \ & \textrm{otherwise}
\end{cases}.
$$
We note that although there may be several Dirichlet characters $\chi$ so that $\xi = N_{L/\Q}\circ\chi$, that the product of $\chi(n)$ with $\Lambda_{L/\Q}(n)$ is independent of the choice of such a $\chi$ by Corollary \ref{CFTCor2}.  We also let
$$
F_{L,\xi,X,z}^\flat (\alpha) = F_{L,\xi,X}(\alpha) - F_{L,\xi,X,z}^\sharp(\alpha).
$$
\end{defn}
Again for these functions we will often suppress some of the subscripts.

We claim that $F^\sharp$ and $G^\sharp$ are good approximations of $F$ and $G$, and in particular we will prove that:

\begin{thm}\label{GApproxThm}
Let $K/\Q$ be a finite Galois extension, and let $C$ be a conjugacy class of $\Gal(K/\Q)$.  Let $A$ be a positive integer and $B$ a sufficiently large multiple of $A$.  Then if $X$ is a positive number, $z=\log^B(X)$, and $\alpha$ any real number, then
\begin{equation}\label{GBoundEqu}
\left|G_{K,C,X,z}^\flat(\alpha) \right| = O\left(X\log^{-A}(X)\right),
\end{equation}
where the implied constant depends on $K,C,A$, and $B$, but not on $X$ or $\alpha$.
\end{thm}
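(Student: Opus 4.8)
The plan is to reduce the claimed bound on $G^\flat$ to an $L^\infty$ bound on $F^\flat$ for the finitely many relevant Hecke $L$-functions, and then to prove that bound by a major/minor arc dissection modeled on \ik. First, Proposition \ref{GFRelationProp} already writes $G_{K,C,X}(\alpha) = \frac{|C|}{|G|}\sum_\chi \overline{\chi}(c) F_{L,\chi,X}(\alpha) + O(\sqrt X)$, where $L$ is the fixed field of a chosen $c\in C$ and $\chi$ runs over characters of $\langle c\rangle$ viewed as characters of $L$. I would first establish the parallel identity $G^\sharp_{K,C,X,z}(\alpha) = \frac{|C|}{|G|}\sum_\chi \overline{\chi}(c) F^\sharp_{L,\chi,X,z}(\alpha)$ up to a negligible error; this is essentially formal once one checks that the coset $H$ attached to $C$ and the subgroup $H_L$ attached to $L$ are compatible under the class field theory dictionary, so that $\Lambda_{K,C}(n)$ is reproduced by $\Lambda_{L/\Q}(n)\,\chi(n)$ summed against $\overline{\chi}(c)$ over $\chi$, together with the well-definedness of $\Lambda_{L/\Q}(n)\chi(n)$ noted after the definition of $F^\sharp$. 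Subtracting the two identities, $G^\flat_{K,C,X,z}(\alpha)$ equals $\frac{|C|}{|G|}\sum_\chi \overline{\chi}(c) F^\flat_{L,\chi,X,z}(\alpha)$ plus an acceptable error, so it suffices to show $\|F^\flat_{L,\chi,X,z}\|_\infty = O(X\log^{-A}X)$ for each such $\chi$.

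For the $F^\flat$ bound I would use a Dirichlet dissection. Given $\alpha$, write $\alpha = a/q + \beta$ with $(a,q) = 1$, $q$ bounded by a small power of $X$, and $|\beta|$ correspondingly small. \textbf{Major arcs.} When $q$ is at most a fixed power of $\log X$, I would estimate $F_{L,\chi,X}(\alpha)$ via the analytic theory of Hecke $L$-functions of $L$ twisted by Dirichlet characters --- the generalization of \ik\ Theorem 5.13, zero-free regions, and Siegel-zero bounds carried out in Section \ref{LFunctSec} --- producing in Section \ref{FSmoothApproxSec} an explicit main term (coming from the principal character/trivial ray class), and then in Section \ref{FSharpSmoothApproxSec} show that $F^\sharp_{L,\chi,X,z}(\alpha)$ produces the same main term up to $O(X\log^{-A}X)$; the difference is the desired bound for $F^\flat$ on the major arcs. \textbf{Minor arcs.} When $q$ is larger, the sieved main term of $F^\sharp$ is itself $O(X\log^{-A}X)$, so it is enough to bound $F_{L,\chi,X}(\alpha)$ directly. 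Expanding $\Lambda_L$ by a Vaughan-type combinatorial identity reduces this to Type I and bilinear sums, the crucial ingredient being control of $\sum_{\mf a} e(\alpha N(\mf a))$ over ideals $\mf a$ in a box; choosing an integral basis turns this into an exponential sum $\sum e(P(n_1,\dots,n_d))$ of a polynomial in integer variables whose leading form has coefficients proportional to $\alpha$.

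The main obstacle is exactly this ideal exponential sum on the minor arcs. Classical Weyl and van der Corput bounds for $\sum e(P(n))$ are adequate when the leading coefficient of $P$ admits no good rational approximation with small denominator, but they deteriorate when that coefficient lies near $a/q$ with $q$ only polylogarithmic --- precisely the range that survives after the major arcs are excised. To get past this I would invoke Lemma \ref{polySumBoundLem}, whose proof itself rests on the results of Section \ref{RatApproxSec} on when integer multiples of a poorly-approximable number acquire a good rational approximation; feeding this into the ideal exponential sums of Section \ref{ExpSumsSec} yields the required power-of-$\log$ saving for $F_{L,\chi,X}(\alpha)$ in Section \ref{FRoughBoundSec}, and combined with the $F^\sharp$ estimate of Section \ref{FSharpRoughSec} gives $\|F^\flat\|_\infty = O(X\log^{-A}X)$. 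Assembling the major- and minor-arc bounds and pushing them back through the $G$--$F$ relation proves Theorem \ref{GApproxThm}, taking $B$ a large enough multiple of $A$ to absorb all the powers of $\log X$ lost in the sieve and the dissection.
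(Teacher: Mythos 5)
Your proposal follows the paper's proof of Theorem~\ref{GApproxThm} exactly: it reduces the bound on $G^\flat$ to the $L^\infty$ bound on $F^\flat$ by combining Proposition~\ref{GFRelationProp} with the identity $\Lambda_{L/\Q}(n)\sum_\chi\overline{\chi}(c)\chi(n)=\Lambda_{K,C}(n)$ (equivalently, $\tfrac{|C|}{|G|}\sum_\chi\overline{\chi}(c)F^\sharp_{L,\chi,X,z}=G^\sharp_{K,C,X,z}$), and then invokes Theorem~\ref{FApproxThm}. The rest of your write-up merely recaps how Theorem~\ref{FApproxThm} is itself established in Section~\ref{FSec}, which the paper treats as a black box at this point.
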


\begin{thm}\label{FApproxThm}
Given $L/\Q$ a number field, and $\xi$ a Grossencharacter of $L$, let $A$ be a positive number and $B$ a sufficiently large multiple of $A$.  Then if $X$ is a positive number, $z=\log^B(X)$, and $\alpha$ any real number, then
\begin{equation}\label{FBoundEqu}
\left|F_{L,\xi,X,z}^\flat(\alpha) \right| = O\left(X\log^{-A}(X)\right),
\end{equation}
where the implied constant depends on $L,\xi,A,$ and $B$, but not on $X$ or $\alpha$.
\end{thm}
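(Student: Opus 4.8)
The plan is to establish, for a suitable common ``main term'' $M_{L,\xi}(\alpha)$ (which will turn out to be negligible unless $\alpha$ is very close to a rational of small denominator \emph{and} $\xi$ factors through $N_{L/\Q}$), both
$F_{L,\xi,X}(\alpha) = M_{L,\xi}(\alpha) + O(X\log^{-A}X)$
and
$F^\sharp_{L,\xi,X,z}(\alpha) = M_{L,\xi}(\alpha) + O(X\log^{-A}X)$,
and then subtract. To this end I dissect the torus by Dirichlet's approximation theorem: fixing $Q=\log^{B_1}(X)$ with $B_1$ suitably large, every $\alpha$ satisfies $|\alpha-a/q|\le 1/(qQ)$ for some reduced fraction $a/q$ with $q\le Q$. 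Call $\alpha$ a \emph{major arc} point if moreover $q\le\log^{B_0}(X)$ for an appropriate $B_0=B_0(A)$, and a \emph{minor arc} point otherwise. On major arcs I produce the main term and check that $F$ and $F^\sharp$ both match it; on minor arcs I show that $F$ and $F^\sharp$ are each $O(X\log^{-A}X)$ outright. Both cases force $B$ (hence $z=\log^B X$) to be taken large compared to $A$.

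\textbf{Major arcs.} Write $e(\alpha n)=e((a/q)n)\,e(\beta n)$ with $|\beta|\le 1/Q$, and expand the additive character $e((a/q)\cdot)$ on $\Z/q\Z$ in Dirichlet characters modulo $q$ via Gauss sums. For $F$ this rewrites the sum as a short combination, over characters $\psi$ mod $q$, of $\sum_{N(\mf a)\le X}\Lambda_L(\mf a)\,(\xi\cdot(\psi\circ N_{L/\Q}))(\mf a)\,e(\beta N(\mf a))$. Removing $e(\beta N(\mf a))$ by Abel summation and invoking the prime ideal theorem for the Hecke $L$-function of $\xi\cdot(\psi\circ N_{L/\Q})$, together with the zero-free region and Siegel-zero estimates set up in the $L$-function section, each such inner sum is $\ll X\log^{-C}X$ unless $\xi\cdot(\psi\circ N_{L/\Q})$ is trivial, in which case it contributes the expected main term, a density times $\int_0^X e(\beta t)\,dt$. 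Such $\psi$ exist exactly when $\xi$ is of the form $\chi\circ N_{L/\Q}$, so $M_{L,\xi}(\alpha)=0$ precisely when $F^\sharp$ vanishes identically; otherwise the surviving terms reproduce exactly what one gets from the analogous, now purely rational, character expansion of $\sum_{n\le X}\Lambda_{L/\Q}(n)\Lambda_z(n)\chi(n)e(\alpha n)$. The sieve weight $\Lambda_z$ perturbs this only by the contribution of integers divisible by a prime $\le z$, which is absorbed into the error using $C(z)=\Theta(\log z)$ and the elementary bound for such integers in a fixed progression. This gives the major-arc case of (\ref{FBoundEqu}).

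\textbf{Minor arcs.} For $F^\sharp$ I use $\Lambda_z(n)=C(z)\sum_{d\mid(n,P(z))}\mu(d)$ to write $F^\sharp_{L,\xi,X,z}(\alpha)$ as $C(z)$ times a sum over squarefree $z$-smooth $d\le X$ of $\mu(d)$ against $\sum_{m\le X/d}\Lambda_{L/\Q}(dm)\chi(dm)e(\alpha dm)$; detecting the congruence conditions defining $\Lambda_{L/\Q}$ and $\chi$ by characters to a \emph{fixed} modulus $r$, the inner sum is $\ll\min(X/d,\ \lVert\alpha d r\rVert^{-1})$ up to $O(1)$ factors, and summing over $z$-smooth $d$ — using Lemma~\ref{SmoothNumberLem} to count such $d$ and the condition $q>\log^{B_0}X$ to control $\sum_d\lVert\alpha d r\rVert^{-1}$ — yields a bound of the shape $X\log^{-B_0+O(1)}X=O(X\log^{-A}X)$ once $B_0\ge A+O(1)$. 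For $F$ the argument is much harder: I apply a Vaughan- or Heath--Brown-type combinatorial identity to $\Lambda_L$ to split $\sum_{N(\mf a)\le X}\Lambda_L(\mf a)\xi(\mf a)e(\alpha N(\mf a))$ into Type I sums $\sum_{N(\mf d)\le D}c_{\mf d}\sum_{N(\mf m)\le X/N(\mf d)}e(\alpha N(\mf d)N(\mf m))$ and Type II bilinear sums $\sum_{N(\mf d)\sim D}\sum_{N(\mf m)\sim M}a_{\mf d}b_{\mf m}\,e(\alpha N(\mf d)N(\mf m))$ with $D$ in a controlled range. After fixing an integral basis of $\mathcal{O}_L$, the inner sum $\sum_{N(\mf m)\le Y}e(t\,N(\mf m))$ becomes an exponential sum of the norm form (a fixed polynomial of degree $[L:\Q]$) over lattice points in a region, and for Type II sums Cauchy--Schwarz in $\mf d$ followed by expanding the square reduces to the same kind of exponential sum over $\mf d$ with phase $\alpha(N(\mf m_1)-N(\mf m_2))$. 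These are estimated in the exponential-sums section via Lemma~\ref{polySumBoundLem}, whose novelty, and the real crux here, is handling the regime where the leading coefficient $\alpha\,N(\mf d)$ has a rational approximation of polylogarithmic denominator even though $\alpha$ does not — which is where the rational-approximation propagation lemmas (on when multiples of a poorly approximable number acquire a good rational approximation) enter. Feeding these bounds through the Type I/II decomposition gives $F_{L,\xi,X}(\alpha)=O(X\log^{-A}X)$ on minor arcs.

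\textbf{Conclusion and the main difficulty.} On major arcs $F^\flat=F-F^\sharp=M_{L,\xi}-M_{L,\xi}+O(X\log^{-A}X)=O(X\log^{-A}X)$; on minor arcs $F^\flat=O(X\log^{-A}X)$ directly; since these cover all $\alpha$, (\ref{FBoundEqu}) follows (and Theorem~\ref{GApproxThm} then drops out by expanding $G$ through Proposition~\ref{GFRelationProp} and applying this bound to finitely many $F$'s). The step I expect to be the main obstacle is the minor-arc bound for $F$, and within it the control of $\sum e(\alpha N(\mf a))$ over ideals when $\alpha$, or $\alpha$ scaled by a small ideal norm, has a moderately small-denominator rational approximation: this is exactly why the standalone polynomial exponential-sum estimate of Lemma~\ref{polySumBoundLem} and the rational-approximation lemmas are needed, and arranging that the savings there is a large enough power of $\log X$, after the losses incurred in the combinatorial decomposition and in summing over $\mf d$, is the technical heart of the argument.
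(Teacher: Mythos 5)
Your proposal matches the paper's argument essentially step for step: the same smooth/rough (''major/minor arc'') dichotomy under Dirichlet approximation, the same Gauss-sum expansion into Hecke $L$-functions with zero-free region and Siegel-zero input on major arcs, the same sieve--geometric-series bound for $F^\sharp$ on minor arcs, and the same Vaughan-type combinatorial decomposition of $\Lambda_L$ into Type~I/II sums over ideals reduced to norm-form exponential sums via Lemma~\ref{polySumBoundLem} and the rational-approximation propagation lemmas on minor arcs. You also correctly identify the genuine novelty and technical crux --- controlling $\sum e(\alpha N(\mf a))$ when $\alpha$ scaled by a small ideal norm develops a polylogarithmic-denominator approximation --- so I have nothing to add.
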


The proofs of these Theorems will be the bulk of Sections \ref{FSec} and \ref{GSec}.

\subsection{Smooth Numbers}\label{SmoothSec}

We also need some results on the distribution of smooth numbers.  We begin with a definition:
\begin{defn}
Let $S(z,Y)$ be the number of $n\leq Y$ so that $n|P(z)$.  In other words the number of $n\leq Y$ so that $n$ is squarefree and has no prime factors bigger than $z$.
\end{defn}

We will need the following bound on $S(z,Y)$:
\begin{lem}\label{SmoothNumberLem}
If $z\leq \log^B(X)$ and $Y\leq X$, then
$$S(z,Y) \ll Y^{1-1/(2B)}\exp\left( O(\sqrt{\log(X)})\right).$$
\end{lem}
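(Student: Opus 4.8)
The plan is to use Rankin's trick. For any parameter $\sigma \in (0,1)$, since $(Y/n)^\sigma \geq 1$ whenever $n \leq Y$, we have
$$
S(z,Y) = \sum_{\substack{n \mid P(z) \\ n \leq Y}} 1 \ \leq\ \sum_{n \mid P(z)} \left(\frac{Y}{n}\right)^{\sigma} = Y^{\sigma} \prod_{p \leq z}\left(1 + p^{-\sigma}\right),
$$
where the last equality holds because $P(z)$ is squarefree, so its divisors are in bijection with subsets of $\{p : p \leq z\}$. This reduces the problem to choosing $\sigma$ well and estimating the Euler product.

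The key choice is $\sigma = 1 - \tfrac{1}{2B}$, which immediately produces the factor $Y^{1 - 1/(2B)}$. For the product I would use $1 + x \leq e^{x}$ to get $\prod_{p \leq z}(1 + p^{-\sigma}) \leq \exp\!\big(\sum_{p \leq z} p^{-\sigma}\big)$, and then bound the prime sum by the full sum over integers and compare with an integral:
$$
\sum_{p \leq z} p^{-\sigma} \ \leq\ \sum_{2 \leq n \leq z} n^{-\sigma} \ \leq\ \int_{1}^{z} t^{-\sigma}\, dt \ =\ \frac{z^{1-\sigma} - 1}{1-\sigma} \ =\ 2B\left(z^{1/(2B)} - 1\right).
$$
Since $z = \log^{B}(X)$, we have $z^{1/(2B)} = \log^{1/2}(X) = \sqrt{\log X}$, so the exponent is at most $2B\sqrt{\log X} = O(\sqrt{\log X})$, with the implied constant depending only on $B$ (which is permitted). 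Combining the two estimates gives $S(z,Y) \leq Y^{1-1/(2B)}\exp\!\big(O(\sqrt{\log X})\big)$, as claimed. The degenerate cases are handled directly: if $z < 2$ then $P(z) = 1$ and $S(z,Y) \leq 1$, and if $Y < 1$ then $S(z,Y) = 0$.

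I do not expect a serious obstacle; this is a routine application of Rankin's trick. The one subtlety worth flagging is the estimate of $\sum_{p \leq z} p^{-\sigma}$: it must be done by comparison with $\int_{1}^{z} t^{-\sigma}\,dt \asymp z^{1-\sigma}/(1-\sigma)$, rather than by factoring out $p^{-1}$ and invoking $\sum_{p \leq z} 1/p \asymp \log\log z$, since the latter route would introduce a spurious $\log\log\log X$ factor in the exponent and just miss the stated bound. The hypothesis $Y \leq X$ is used only to phrase the error factor in terms of $\log X$; it plays no further role.
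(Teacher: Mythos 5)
Your proof is correct, and it takes a genuinely different route from the paper's. You apply Rankin's trick directly: bound the counting sum by $Y^{\sigma}\prod_{p\leq z}(1+p^{-\sigma})$ with $\sigma = 1-\tfrac{1}{2B}$, and control the Euler product by $\exp\bigl(\sum_{p\leq z}p^{-\sigma}\bigr)$ with an integral-comparison estimate for the prime sum. The paper instead writes $\int_0^Y S(z,y)\,dy$ as a Perron/Mellin contour integral of $(s(s+1))^{-1}\prod_{p\leq z}(1+p^{-s})Y^{s+1}$, shifts the line of integration to $\mathrm{Re}(s) = 1-\tfrac{1}{2B}$, uses essentially the same Euler-product bound $\ll \exp\bigl(z^{1-\mathrm{Re}(s)}/(1-\mathrm{Re}(s))\bigr)$ to control the integrand, and then recovers $S(z,Y)$ from the smoothed average via $\int_0^{2Y}S(z,y)\,dy \gg Y\,S(z,Y)$. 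Both arguments hinge on the same product estimate with the same choice $1-\sigma=\tfrac{1}{2B}$, so they land on the same exponent, but your version avoids the contour integral and the smoothing-and-unsmoothing step entirely; it is cleaner, shorter, and requires no analytic continuation or absolute-convergence bookkeeping. The paper's approach would be more flexible if one later wanted a sharper or uniform-in-$\sigma$ estimate (for instance to pick up secondary terms), but for the stated upper bound your elementary route is preferable. Your side remark about avoiding the $\sum_{p\leq z}1/p$ factoring is also apt.
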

\begin{proof}
Notice that
$$
\int_{y=0}^Y S(z,y)dy = \frac{1}{2\pi i} \int_{1-i\infty}^{1+i\infty} (s(s+1))^{-1} \prod_{p< z} (1+p^{-s}) Y^{s+1} ds.
$$
Note that for $\Re(s)>\frac{1}{2},$
$$
\left|\prod_{p< z}(1+p^{-s})\right| = \left|\exp\left(\sum_{p< z} p^{-s} + O(1)\right)\right| \ll \exp\left(\frac{z^{1-\Re(s)}}{1-\Re(s)} \right).
$$
Changing the line of integration to $1-\Re(s) = \frac{1}{2B}$, we get that the integrand is at most $s^{-2}Y^{2-1/(2B)}\exp\left(O(\sqrt{\log(X)}) \right)$.  Integrating and evaluating at $2Y$, we get that
$$
Y^{2-1/(2B)}\exp\left( O(\sqrt{\log(X)})\right) \gg \int_{y=0}^{2Y} S(z,y)dy \gg Y S(z,Y),
$$
proving our result.
\end{proof}

It should be noted that while Lemma \ref{SmoothNumberLem} is neither new nor the best bound currently known for $S(z,Y)$, that we use it because it is simple and sufficient for our purposes. We will also use the following Corollary.

\begin{cor}\label{SmoothNumbersCor}
If $z\leq \log^B(X)$ and $Y\geq X$, then
$$S(z,Y) \ll Y^{1-1/(3B)}.$$
\end{cor}
\begin{proof}
Apply Lemma \ref{SmoothNumberLem} with $X=Y$.
\end{proof}

\section{Approximation of $F$}\label{FSec}

In this Section, we will prove Theorem \ref{FApproxThm}.

In order to prove Theorem \ref{FApproxThm}, we will split into cases based upon whether $\alpha$ is well approximated by a rational number of small denominator.  If it is (the smooth case), we proceed to use the theory of $L$-functions to approximate $F$.  If $\alpha$ is not well approximated (the rough case), we generalize results on exponential sums over primes to show that $|F|$ is small.  In either case, $F^\sharp$ is not difficult to approximate. We note that the use of the word ``smooth" here has nothing to do with the concept of smooth numbers discussed in the previous section, and is merely an unfortunate coincidence of terminology.

We note that by Dirichlet's approximation Theorem, we can always find a pair $(a,q)$ with $a$ and $q$ relatively prime and $q<M=\Theta(X\log^{-B}(X))$ with $\left| \alpha - \frac{a}{q}\right| \leq \frac{1}{qM}$.  We consider the smooth case to be the one where $q\leq z = \log^B (X)$. As we will often be concerned with whether a real number is well approximated by a rational number of given denominator, we make the following definition:

\begin{defn}
We say that a real number $\alpha$ has a \emph{rational approximation with denominator $q$} if there exists an integer $a$ relatively prime to $q$ so that
$$
\left| \alpha - \frac{a}{q} \right| < \frac{1}{q^2}.
$$
\end{defn}

\subsection{$\alpha$ Smooth}\label{alphaSmoothSec}

In this Section, we will prove the following Proposition:
\begin{prop}\label{FrootProp}
Let $L$ be a number field, and $\xi$ a Grossencharacter.  If $z = \log^B(X)$, $Y\leq X$ and $\alpha = \frac{a}{q}$ with $a$ and $q$ relatively prime and $q\leq z$, then for some constant $c>0$ (depending only on $L$, $\xi$ and $B$),
$$
|F_{L,\xi,Y,z}^\flat(\alpha)| = O\left( X \exp\left( -c\sqrt{\log(X)}\right) \right).
$$
\end{prop}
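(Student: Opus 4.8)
The plan is to reduce the statement to a standard prime-number-theorem estimate for the Hecke $L$-functions of $L$, handled via the classical contour-shift argument with a Siegel-zero caveat. First I would dispose of the trivial case: if $\xi$ is not of the form $\chi\circ N_{L/\Q}$ for a Dirichlet character $\chi$, then $F^\sharp\equiv 0$, and one must instead show $|F_{L,\xi,Y,z}(\alpha)|$ itself is small; since $\alpha=a/q$ with $q\le z=\log^B X$, the twist $e(\alpha N(\mathfrak a))$ depends only on $N(\mathfrak a)\bmod q$, so $F_{L,\xi,Y}(a/q)$ is a linear combination over residues $r\bmod q$ of $\sum_{N(\mathfrak a)\le Y,\ N(\mathfrak a)\equiv r} \Lambda_L(\mathfrak a)\xi(\mathfrak a)$, which is a sum of $\psi$-type functions attached to $\xi$ twisted by ray-class characters of modulus $q$; none of these twisted characters is trivial (because $\xi$ itself is not induced from $\Q$ in the relevant sense), so each contributes $O(Y\exp(-c\sqrt{\log Y}))$ by the standard zero-free region for Hecke $L$-functions, and summing $q\le\log^B X$ such terms keeps the bound of the stated shape.

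In the main case $\xi=\chi\circ N_{L/\Q}$, I would write $F^\flat = F - F^\sharp$ and estimate each piece. For $F$: again expand $e(\alpha n)$ in residues mod $q$, and for each residue class write the relevant sum over prime ideals as a combination of $\sum_{N(\mathfrak p)\le Y}\log N(\mathfrak p)\,\psi(\mathfrak p)$ over ray-class characters $\psi$ of $L$ of modulus dividing $qD_L$ and conductor compatible with $\chi$. Apply the explicit formula / Perron-plus-contour-shift for the Hecke $L$-function $L(s,\psi)$: the contribution of the pole (present only for the trivial character's contribution, matching the $\Lambda_{L/\Q}$-normalized main term) gives exactly the $F^\sharp$-type main term, and the rest is bounded using the zero-free region $\sigma > 1 - c/\log(q(|t|+3))$ for $q\le\log^B X$, giving $O(Y\exp(-c'\sqrt{\log Y}))$, except for a possible Siegel zero, whose effect I would absorb using Siegel's ineffective bound — this is why the constant $c$ is allowed to depend on $L,\xi,B$ and the implied constant is ineffective. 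For $F^\sharp$ itself: since $\Lambda_z(n)=C(z)\sum_{d\mid(n,P(z,q))}\mu(d)[(n,q)=1]$, the sum defining $F^\sharp$ becomes $C(z)\sum_{d\mid P(z,q)}\mu(d)\sum_{n\le Y,\ d\mid n,\ (n,q)=1}\Lambda_{L/\Q}(n)\chi(n)e(\alpha n)$; the inner sum over $n$ in a fixed progression is handled by the same $L$-function machinery (now for Dirichlet $L$-functions of modulus $\operatorname{lcm}(d,q,D_L)\le z\cdot z\cdot D_L$, still polylogarithmic), producing a main term that telescopes — via the sieve identity and $C(z)=\Theta(\log z)$ — to match the main term from $F$, with error $O(Y\exp(-c\sqrt{\log X}))$ per $d$; the number of $d$'s is $S(z,P(z))\ll X^{o(1)}$, so Lemma \ref{SmoothNumberLem} (or rather the trivial bound $2^{\pi(z)}=\exp(O(z/\log z))=\exp(O(\log^{B-1}X))$, which is too big) forces a little care — I would instead keep the error per $d$ at size $O((Y/d)\exp(-c\sqrt{\log X}))$ and sum $\sum_d 1/d \ll \log z$, absorbing the loss into the exponential. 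Subtracting, the main terms cancel and $|F^\flat(a/q)| = O(X\exp(-c\sqrt{\log X}))$.

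The main obstacle is organizing the cancellation of the two main terms cleanly: the main term coming from $F$ is expressed through the residue of a Hecke $L$-function of $L$ and naturally carries the factor $\phi(D_L)/|H_L|$ and a congruence restriction to $H_L\subset(\Z/D_L\Z)^*$, while the main term coming from $F^\sharp$ is built by hand from the $\Lambda_{L/\Q}$ and $\Lambda_z$ sieve weights; one must check that global class field theory identifies "$n$ lies in the image of the norm map, i.e.\ $n\in H_L$" with exactly the congruence conditions that the $L$-function main term sees, so that after the residue expansions the polynomial-in-$Y$ parts agree termwise and only the exponentially small errors survive. A secondary technical point is bookkeeping the dependence on $q\le z=\log^B X$ uniformly: all the $L$-functions in play have modulus bounded by a fixed power of $\log X$, so the zero-free region gives $\exp(-c\log Y/\log\log Y)$-type savings, comfortably stronger than the claimed $\exp(-c\sqrt{\log X})$, leaving room to absorb the $\exp(O(\sqrt{\log X}))$-sized combinatorial losses from Lemma \ref{SmoothNumberLem} and the $O(z)$ summations.
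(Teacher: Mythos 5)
Your decomposition and the treatment of $F$ match the paper: Proposition~\ref{FatRatProp} in the paper is exactly the Hecke-$L$-function contour argument you describe, with the zero-free region (restated via \cite{IK} Theorem 5.13 as Equation~(\ref{LBoundEqn})) and a Siegel-zero bound (Lemma~\ref{SiegelZerosLem}); the Gauss-sum expansion over Dirichlet characters mod $q$ at the end is also how the paper reduces $\alpha=a/q$ to $\alpha=0$. Your concluding remark that the two main terms must cancel via class field theory is also the right picture — in the paper both evaluate to $rY$ in Propositions~\ref{FatRatProp} and~\ref{FSharpAtRatProp}.

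Where you diverge, and where there is a genuine gap, is $F^\sharp$. In the paper's Proposition~\ref{FSharpAtRatProp} no $L$-functions are used at all: after expanding $\Lambda_z(n)=C(z)\sum_{d\mid(n,P(z,q'D_L))}\mu(d)$, the inner sum $\sum_{m\le Y/d}\Lambda_{L/\Q}(dm)\chi'(m)$ is a sum of a function of $m$ that is periodic with period dividing $q'D_L$ (a fixed, $X$-independent modulus), so it equals $r\mu(d)Y/d\cdot\phi(q'D_L)/(q'D_L)$ plus an error of $O(q'D_L)$, i.e.\ a \emph{constant} error per $d$. Summed over $d\le Y$ with $d\mid P(z,q'D_L)$, there are $S(z,Y)$ terms, and Lemma~\ref{SmoothNumberLem} gives $C(z)\,q\,S(z,Y)\ll Y^{1-1/(2B)}\exp(O(\sqrt{\log X}))$, which is comfortably inside the target error; a second application of Lemma~\ref{SmoothNumberLem} controls the extension of $\sum_d\mu(d)/d$ to all $d\mid P(z,q'D_L)$. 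Your plan instead feeds the inner sum to a prime-number-theorem for $L$-functions, which is overkill (the summand is not a von Mangoldt weight — it is just a bounded periodic function) and, more importantly, gives an error of shape $O((Y/d)\exp(-c\sqrt{\log(Y/d)}))$ that \emph{degrades as $d\to Y$}: you cannot honestly promote this to $O((Y/d)\exp(-c\sqrt{\log X}))$ uniformly in $d$, which is the step your ``$\sum_d 1/d\ll\log z$'' absorption relies on. You noticed that $2^{\pi(z)}$ is too large, but the correct count of relevant $d$ is $S(z,Y)$, not $2^{\pi(z)}$ — that is precisely the content of Lemma~\ref{SmoothNumberLem}, which you dismiss. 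The repair is to split at $d\le Y^{1-1/(4B)}$ (where $\log(Y/d)\gg\log Y$, so the PNT saving is genuine and your $\sum 1/d$ works) and to handle $d>Y^{1-1/(4B)}$ by the trivial bound $Y/d\le Y^{1/(4B)}$ times $S(z,Y)\ll Y^{1-1/(2B)}\exp(O(\sqrt{\log X}))$; without such a split, the argument does not close. The paper's periodicity argument sidesteps all of this because its per-$d$ error is a constant rather than a quantity that can spike when $Y/d$ is small.

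One smaller slip worth flagging: you write that the Dirichlet $L$-functions appearing in the $F^\sharp$ analysis have modulus $\operatorname{lcm}(d,q,D_L)\le z^2 D_L$. Since $d\mid P(z)$, $d$ is $z$-\emph{smooth} but can be nearly as large as $Y$, so $\operatorname{lcm}(d,q,D_L)$ is not polylogarithmic. What is polylogarithmic is the \emph{period} of the inner summand in $m$ (at most $qD_L$), because the $d$-dependence has been factored out; that is the quantity you actually need to be small, and the paper uses it directly without ever invoking an $L$-function for that sum.
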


We note that this result can easily be extended to all smooth $\alpha$.  In particular we have:
\begin{cor}\label{FsmoothCor}
Let $L$ and $\xi$ be as above.  Let $A$ be a constant, and $B$ a sufficiently large multiple of $A$.  Let $z=\log^B(X)$.  Suppose that $\alpha=\frac{a}{q}+\theta$ with $a$ and $q$ relatively prime, $q\leq z$ and $|\theta|\leq \frac{1}{qM}$ (for $M=X\log^{-B}(X)$).  Then
$$
|F_{L,\xi,X,z}^\flat(\alpha)| = O(X\log^{-A}(X)).
$$
\end{cor}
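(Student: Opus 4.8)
The plan is to deduce Corollary \ref{FsmoothCor} from Proposition \ref{FrootProp} by absorbing the extra factor $e(\theta n)$ via partial summation, treating $|\theta| \leq \frac{1}{qM}$ with $M = \Theta(X\log^{-B}(X))$ as a slowly varying weight. First I would observe that
$$
F_{L,\xi,X,z}^\flat(\alpha) = \sum_{n \leq X} c(n) e(\theta n),
$$
where $c(n)$ is the coefficient sequence defining $F_{L,\xi,n,z}^\flat(a/q)$ incrementally — that is, $c(n) = \Lambda_L^{\text{(ideal count)}}(n)\xi\text{-part}(n)e(an/q) - (\text{the }\sharp\text{ term at }n)$ — so that the partial sums $S(Y) := \sum_{n \leq Y} c(n) = F_{L,\xi,Y,z}^\flat(a/q)$ are exactly the quantities bounded by Proposition \ref{FrootProp}. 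Here one must be a little careful: the function $F^\sharp_{L,\xi,X,z}$ has an $X$-dependence not only through the range of summation but potentially through normalizing constants like $C(z)$; since $z = \log^B(X)$ is itself fixed once $X$ is, I would first fix $X$, note $z$ is then a constant, and apply the Proposition with the upper limit $Y$ ranging over $[1,X]$ while all other data ($L,\xi,z,a,q$) stay fixed — which is precisely the form in which Proposition \ref{FrootProp} is stated (it bounds $F^\flat_{L,\xi,Y,z}$ for $Y \leq X$). So for all $Y \leq X$ we have $|S(Y)| = O\!\left(X\exp(-c\sqrt{\log X})\right)$.

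Next I would apply Abel summation:
$$
F_{L,\xi,X,z}^\flat(\alpha) = \sum_{n \leq X} c(n) e(\theta n) = S(X)e(\theta X) - 2\pi i \theta \int_1^X S(t) e(\theta t)\, dt.
$$
Taking absolute values, $\left|F_{L,\xi,X,z}^\flat(\alpha)\right| \leq |S(X)| + 2\pi |\theta| \int_1^X |S(t)|\, dt \leq \left(1 + 2\pi|\theta| X\right) \cdot O\!\left(X \exp(-c\sqrt{\log X})\right)$. Since $|\theta| \leq \frac{1}{qM} \leq \frac{1}{M} = O(\log^B(X)/X)$, the factor $|\theta| X$ is $O(\log^B X)$, so the whole bound is
$$
O\!\left(X \log^B(X) \exp(-c\sqrt{\log X})\right) = O\!\left(X \exp(-c'\sqrt{\log X})\right)
$$
for any $c' < c$ (the polylog factor is swallowed by the exponential). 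Finally, for any fixed $A$, $\exp(-c'\sqrt{\log X}) = O(\log^{-A}(X))$, which gives the claimed $O(X\log^{-A}(X))$. The hypothesis that $B$ is a sufficiently large multiple of $A$ is inherited directly from Proposition \ref{FrootProp}, which already requires it implicitly through the dependence of $c$ on $B$ and through the sieve bounds; no new constraint on $B$ is introduced at this stage.

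The only genuine subtlety — and the step I would be most careful about — is the reduction from the interval estimate over all $Y \leq X$ to something I can feed into partial summation cleanly, since the "generating function" $F^\flat_{L,\xi,Y,z}$ is a partial sum of a fixed sequence only if the $\sharp$-part's coefficients do not themselves depend on the cutoff. Reading the definition, $F^\sharp_{L,\xi,X,z}(\alpha) = \sum_{n\leq X}\Lambda_{L/\Q}(n)\Lambda_z(n)\chi(n)e(\alpha n)$ is manifestly a partial sum of the $X$-independent sequence $\Lambda_{L/\Q}(n)\Lambda_z(n)\chi(n)e(an/q)$ once $z$ is frozen, and similarly $F_{L,\xi,X}$ is a partial sum over ideals ordered by norm; so $F^\flat_{L,\xi,Y,z}(a/q)$ genuinely equals $\sum_{n \leq Y} c(n)$ for a fixed sequence $c(n)$ (with the ideal-sum contributions grouped by their norm $n$), and the Abel summation is legitimate. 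Everything else is routine. I would present the argument in roughly the three-paragraph structure above: set up the partial-sum identification, apply Abel summation and estimate, then convert the $\exp(-c\sqrt{\log X})$ savings into the desired power of $\log$.
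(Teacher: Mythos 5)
Your proof is correct and matches the paper's argument: both reduce the Corollary to Proposition \ref{FrootProp} by Abel summation against the factor $e(\theta n)$, using $|\theta|\leq 1/(qM)$ with $M=\Theta(X\log^{-B}X)$ to keep the extra factor at $O(\log^B X)$, which is then absorbed by the $\exp(-c\sqrt{\log X})$ savings. The only difference is cosmetic (you use the integral form of partial summation, the paper uses the discrete telescoping form), and your explicit check that $F^\flat_{L,\xi,Y,z}$ is a genuine partial sum of a fixed sequence once $X$, hence $z$, is frozen is a point the paper leaves implicit.
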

\begin{proof}[Proof (Given Proposition \ref{FrootProp})]
Noting that if $F_{L,\xi,X}^\flat (\alpha) = \sum_{n\leq X} a_n e(\alpha n)$, then by Abel summation and Proposition \ref{FrootProp},
\begin{align*}
F_{L,\xi,X}^\flat (\alpha) & = \sum_{n\leq X} a_n e\left(\frac{na}{q}\right)e\left( n\theta\right)\\
& = (1-e(\theta))\left( \sum_{Y\leq X} F_{L,\xi,Y}^\flat\left(\frac{a}{q}\right) e(Y\theta) \right) + F_{L,\xi,X}^\flat\left(\frac{a}{q}\right)e((X+1)\theta)\\
& = O\left(X^{-1}\log^{B}(X)\right)\left(\sum_{Y\leq X} O\left(X\log^{-A-B}(X) \right)\right)+O\left(X\log^{-A-B}(X) \right)\\
& = O\left(X \log^{-A}(X)\right).
\end{align*}
\end{proof}

In order to prove Proposition \ref{FrootProp}, we will need to separately approximate $F$ and $F^\sharp$.  For the former, we will also need to review some basic facts about Hecke $L$-functions.

\subsubsection{Results on $L$-functions}\label{LFunctSec}

Fix a number field $L$ and a Grossencharacter $\xi$.  We consider Hecke $L$-functions of the form $L(\xi\chi,s)$ where $\chi$ is a Dirichlet character of modulus $q\leq z=\log^B(X)$ thought of as a Grossencharacter via $\chi(\mf a) = \chi(N_{L/\Q}(\mf a))$.  We let $d$ be the degree of $L$ over $\Q$, and let $D_L$ be the discriminant.  We let $\mf m$ be the modulus of the character $\xi$, and $q$ the modulus of $\chi$.  We note that $\xi\chi$ has modulus at most $q\mf m$.  Therefore by \cite{IK}, in the paragraph above Theorem 5.35, $L(\xi\chi)$ has analytic conductor $\mf q \leq 4^d|d_L|N(\mf m) q^d$, and by Theorem 5.35 of \cite{IK}, for some constant $c$ depending only on $L$, $L(\xi\chi,s)$ has no zero in the region
$$
\sigma > 1-\frac{c}{d\log(|d_L|N(\mf m)q^d(|t|+3))}
$$
except for possibly one Siegel zero.  Note also that $L(\xi\chi,s)$ has a simple pole at $s=1$ if $\xi=\bar{\chi}$, and otherwise is holomorphic. Noting that
$$
\frac{-L'(\xi\chi,s)}{L(\xi\chi,s)} = \sum_{\mf a} \Lambda_L(\mf a)\xi\chi(\mf a)N(\mf a)^{-s},
$$
and that the $n^{-s}$ coefficient of the above is at most $d\log(n)$, we may apply Theorem 5.13 of \cite{IK} and obtain for a suitable constant $c>0$,
\begin{align}
& \sum_{N(\mf a)\leq Y}  \Lambda_L(\mf a) \xi(\mf a)\chi(\mf a) =\label{LBoundEqn}\\
& r Y - \frac{Y^\beta}{\beta} + O\left(Y\exp\left(\frac{-c\log Y}{\sqrt{\log Y}+3\log(q^d)+O(1)} \right)(\log(Yq^d)+O(1))^4 \right),\notag
\end{align}
where the term $\frac{Y^\beta}{\beta}$ should be taken with $\beta$ the Siegel zero if it exists; $r=0$ unless $\xi\chi=1$, in which case, $r=1$; and the implied constants may depend on $L$, and $\xi$ but not on $\chi$ or $Y$.

In order to make use of Equation \ref{LBoundEqn}, we will need to prove bounds on the size of Siegel zeroes.  In particular we show that:
\begin{lem}\label{SiegelZerosLem}
For all $L$ and $\xi$, and all  $\epsilon>0$, there exists a $c(\epsilon)>0$ so that for every Dirichlet character $\chi$ of modulus $q$ and every Siegel zero $\beta$ of $L(\xi\chi,s)$,
$$
\beta > 1- \frac{c(\epsilon)}{q^\epsilon}.
$$
\end{lem}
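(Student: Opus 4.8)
The plan is to reduce Lemma~\ref{SiegelZerosLem} to the classical Siegel bound for Dirichlet $L$-functions, by a standard ``positivity of a product of $L$-functions'' argument applied to the factorization of the relevant Hecke $L$-function into abelian pieces. The key observation is that a Siegel zero $\beta$ of $L(\xi\chi, s)$ can only occur when $\xi\chi$ is a real character, and in that regime the Hecke $L$-function over $L$ has an Artin factorization relating it to Dirichlet $L$-functions over $\Q$. First I would recall that if $\beta$ is a Siegel zero of $L(\xi\chi,s)$ then $\xi\chi$ must be quadratic (a genuine zero so close to $1$ forces the character to be real, since the product $L(\xi\chi,s)L(\overline{\xi\chi},s)$ otherwise has a double real zero near $1$ contradicting the zero-free region up to the exceptional one). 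In particular the restriction of $\xi\chi$ to the rational primes is controlled, and for fixed $L$ and $\xi$ there are only finitely many possibilities for the ``$L$-part'' while the $\chi$-part ranges over quadratic Dirichlet characters of conductor dividing $q$.

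Next I would invoke the relevant form of Siegel's theorem. The cleanest route: consider the Dedekind-type product $\prod_{\psi} L(\xi\chi\psi, s)$ taken over all characters $\psi$ of $\Gal(L^a/\Q)$ where $L^a$ is the maximal abelian subextension; this product equals (up to finitely many Euler factors bounded independently of $\chi$) a product of Dirichlet $L$-functions $L(\chi', s)$ over $\Q$ with $\chi'$ of conductor $\ll_L q$. The product has nonnegative Dirichlet coefficients (it is, up to bounded factors, the Dedekind zeta function of an abelian field twisted by $\chi$, hence a product of abelian $L$-functions whose logarithm has nonnegative coefficients), and it has a simple pole at $s=1$ coming from the trivial character. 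A Siegel zero $\beta$ of $L(\xi\chi,s)$ is a zero of this whole product. Then Siegel's theorem in its standard form — for every $\epsilon>0$ there is $c(\epsilon)>0$ with $L(\chi', \sigma) \neq 0$ for $\sigma > 1 - c(\epsilon) q^{-\epsilon}$ for real primitive $\chi'$ of conductor $q$ (see \cite{IK} Theorem 5.28 or the surrounding discussion) — applied to the relevant Dirichlet factor gives $\beta > 1 - c(\epsilon) (Cq)^{-\epsilon}$, and absorbing the constant $C = C(L)$ into a renamed $c(\epsilon)$ (using $\epsilon/2$ in place of $\epsilon$, say) finishes the bound.

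Concretely the steps are: (i) show a Siegel zero forces $\xi\chi$ real, so we may assume $\chi$ runs over real characters and $\xi$ over a finite real set; (ii) factor the Hecke $L$-function $L(\xi\chi, s)$, or rather a suitable finite product containing it, as a product of Dirichlet $L$-functions over $\Q$ of conductor $\ll_L q$, up to finitely many Euler factors which are nonzero and bounded near $s=1$ uniformly in $\chi$; (iii) observe $\beta$ is then a real zero near $1$ of one of these Dirichlet $L$-functions, with the product having a pole at $1$ to license the positivity argument underlying Siegel; (iv) quote Siegel's theorem for Dirichlet $L$-functions to get $\beta > 1 - c'(\epsilon)q^{-\epsilon}$ and rescale $\epsilon$ to kill the $L$-dependent conductor constant.

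**The main obstacle** will be step~(ii): making the reduction from a Hecke $L$-function over a general number field $L$ to Dirichlet $L$-functions over $\Q$ genuinely rigorous and uniform in $\chi$. The point is that $\xi$ is an arbitrary Grossencharacter of $L$, not necessarily of the form $\chi'\circ N_{L/\Q}$, so $L(\xi\chi, s)$ by itself need not be a product of abelian $L$-functions over $\Q$ — but a Siegel zero is a zero of a {\em real} character's $L$-function, and one must argue that the exceptional real zero can be detected inside an auxiliary product that {\em is} abelian over $\Q$ (for instance by multiplying by $L(1,s)$ over $L$, i.e.\ $\zeta_L(s)$, and using that $\zeta_L(s)/\zeta(s)$ is entire and nonvanishing in a neighborhood of $s=1$ that one must control). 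Alternatively, and perhaps more cleanly, one can bypass $L$ entirely: a Siegel zero of $L(\xi\chi,s)$ with $\xi$ fixed is, via the functional-equation/Deuring–Heilbronn style phenomenon, tied to a real zero of an honest Dirichlet $L$-function $L(\chi\chi_0, s)$ where $\chi_0$ is a fixed real character attached to $L$ (the quadratic character cut out by some quadratic subfield, if $\xi$ has a quadratic ``rational component''); when $\xi$ has no such component there is simply no Siegel zero for large $q$ and the bound is trivial. Pinning down exactly which fixed character $\chi_0$ appears, and handling the finitely many exceptional small $q$, is the fiddly part, but it is a finite case check once the structure is laid out.
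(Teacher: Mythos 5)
Your proposal correctly identifies the key difficulty but does not resolve it, and the intended reduction to Dirichlet $L$-functions over $\Q$ is in fact not available in general, so there is a genuine gap at your step (ii).

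The heart of your plan is to factor $L(\xi\chi,s)$, or a product containing it, into Dirichlet $L$-functions over $\Q$ and then quote the classical Siegel theorem. This only works when every relevant Hecke character is of norm type, i.e.\ of the form $\chi'\circ N_{L/\Q}$, which happens exactly when the corresponding class field is abelian over $\Q$. A general real Grossencharacter $\xi$ of $L$ cuts out a quadratic extension $M/L$, and $L(\xi,s)=\zeta_M(s)/\zeta_L(s)$ is a degree-$[L:\Q]$ $L$-function over $\Q$ that is generically irreducible (for instance, take $L$ real quadratic of class number $2$ and $\xi$ the nontrivial class-group character, or any nonabelian $M/\Q$). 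Neither twisting by $\chi$, nor multiplying by $\zeta_L(s)$, nor averaging over characters $\psi$ of $\Gal(L^a/\Q)$ as you suggest, turns such an $L$-function into a product of Dirichlet $L$-functions over $\Q$: none of those operations interact with an Artin factorization unless $L/\Q$ is abelian. Your fallback claim that ``when $\xi$ has no rational quadratic component there is simply no Siegel zero for large $q$'' is also an assertion without an argument --- ruling out Siegel zeros for Hecke $L$-functions of a fixed infinite family is not something one gets for free, and indeed if it were true one would not need a Siegel-type bound at all in that regime.

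The paper takes the route you implicitly set aside: it stays over $L$ and reruns the Landau--Siegel positivity argument with Hecke $L$-functions. Concretely it forms
$$
L(s)=\zeta_L(s)\,L(\xi\chi_1,s)\,L(\xi\chi_2,s)\,L(\xi^2\chi_1\chi_2,s),
$$
which for totally real $\xi\chi_i$ has nonnegative coefficients at each prime ideal of $L$ (the same $(1+\eta_1(\mf p))(1+\eta_2(\mf p))\ge 0$ computation as over $\Q$), a simple pole at $s=1$ from $\zeta_L$, and conductor $O((q_1q_2)^{2d})$. One then follows \cite{IK}~\S5.9 with the convexity bound and a partial-sum estimate for $\sum_{N(\mf a)\le x}\xi\chi(\mf a)$ taking the place of the rational analogues, and inherits the ineffectiveness exactly as in the classical case; the $\xi$-dependence is absorbed into the implied constant since $\xi$ is fixed. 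That is the step your proposal is missing: one must carry out the Siegel mechanism directly in the Hecke setting rather than push the problem back to $\Q$.
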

\begin{proof}
We follow the proof of Theorem 5.28 part 2 from \cite{IK}, and note the places where we differ.  We note that Theorem 5.35 states that we only need by concerned with the case when $\xi\chi$ is totally real.  We then consider two such $\chi$ having Siegel zeros.  We use, $L(s)=\zeta_L(s)L(\xi\chi_1,s)L(\xi\chi_2,s)L(\xi^2\chi_1\chi_2)$, which has conductor $O(q_1 q_2)^{2d}$ instead of the analogous one from \cite{IK}.  This gives us a convexity bound on the integral term of $O((q_1 q_2)^d x^{1-\beta})$, instead of the one listed.  Again assuming that $\beta > 3/4$, we take $x>c(q_1 q_2)^{4d}$.  We notice that we still have (5.64) for $\sigma>1-1/d+\epsilon$ (for any $\epsilon>0$) by noting that $|\sum_{N(\mf a)\leq x}\xi\chi(\mf a)|=O(x^{1-1/d}+\max(x,q))$.  Therefore, Equation (5.75) of \cite{IK} becomes
$$
L(\xi\chi_2,1) \gg (1-\beta_1)(q_1q_2)^{-4d(1-\beta_1)}(\log (q_1 q_2))^{-2}.
$$
The rest of the argument from \cite{IK} carries over more or less directly.
\end{proof}

\subsubsection{Approximation of $F$}\label{FSmoothApproxSec}

We prove
\begin{prop}\label{FatRatProp}
With $L,\xi,\chi,Y,r$ as above, $X\geq Y$ and $z=\log^B(X)$,
\begin{equation}\label{PrimeSumEq}
F_{L,\xi\chi,Y}(0) = rY+ O\left(X\exp(-c\sqrt{\log(X)})\right).
\end{equation}
Where again $c$ depends on $L,\xi$ but not $\chi,X,Y$.
\end{prop}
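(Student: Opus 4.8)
The plan is to observe that $F_{L,\xi\chi,Y}(0)$ is, by definition, exactly the partial sum $\sum_{N(\mf a)\le Y}\Lambda_L(\mf a)\xi(\mf a)\chi(\mf a)$ appearing on the left of Equation (\ref{LBoundEqn}). That estimate therefore applies verbatim and yields that $F_{L,\xi\chi,Y}(0)$ equals $rY$ minus the Siegel term $Y^\beta/\beta$ plus the explicit error term displayed in (\ref{LBoundEqn}); here $r$ is exactly the constant of the present statement, and the Siegel term is present only when $L(\xi\chi,s)$ has a Siegel zero $\beta$. It then suffices to show that, under the standing hypotheses $Y\le X$ and $q\le z=\log^B(X)$, both the Siegel term and that explicit error term are $O\bigl(X\exp(-c\sqrt{\log X})\bigr)$ with $c=c(L,\xi)>0$, which is the bound claimed in (\ref{PrimeSumEq}).

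I would first clear away small $Y$. If $Y\le X^{1/2}$, the trivial bound $|F_{L,\xi\chi,Y}(0)|\ll Y$ (a Chebyshev-type estimate, or (\ref{LBoundEqn}) itself with the trivial character) together with $|rY|\le Y$ gives $|F_{L,\xi\chi,Y}(0)-rY|\ll X^{1/2}$, which is far within the target. So I may assume $Y>X^{1/2}$, hence $\log Y\ge\tfrac12\log X$ once $X$ is large. In this range the explicit error term from (\ref{LBoundEqn}) is immediate: since $q\le\log^B(X)$ we have $3\log(q^d)=O(\log\log X)$, which is dominated by $\sqrt{\log Y}\ge\sqrt{\tfrac12\log X}$, so the exponent there is at most $-c'\sqrt{\log Y}\le -c''\sqrt{\log X}$, while the polynomial factor $(\log(Yq^d)+O(1))^4=O(\log^4 X)$ is absorbed by shrinking the constant; the whole term is thus $O\bigl(X\exp(-c''\sqrt{\log X})\bigr)$.

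The substantive part is the Siegel term, and this is the main obstacle: without an effective lower bound on $1-\beta$ one cannot rule out a contribution $Y^\beta/\beta$ comparable to $Y$, i.e.\ as large as the main term itself. If $\xi\chi$ is not real there is no Siegel zero (as in the discussion around Theorem 5.35 of \cite{IK}) and we are done. Otherwise, fix $\epsilon>0$ (depending only on $L,\xi,B$) with $B\epsilon<\tfrac12$, and invoke Lemma \ref{SiegelZerosLem}: since $q\le z=\log^B(X)$ it gives $1-\beta\gg_\epsilon q^{-\epsilon}\gg_\epsilon \log^{-B\epsilon}(X)$. Using $\log Y\ge\tfrac12\log X$ (we are in the large-$Y$ case),
$$
\frac{Y^\beta}{\beta}\ll Y\exp\bigl(-(1-\beta)\log Y\bigr)\ll Y\exp\Bigl(-c(\epsilon)\,\log^{-B\epsilon}(X)\cdot\tfrac12\log X\Bigr)\ll X\exp\Bigl(-\tfrac{c(\epsilon)}{2}\,\log^{\,1-B\epsilon}(X)\Bigr)\ll X\exp\bigl(-c\sqrt{\log X}\bigr),
$$
the last inequality because $1-B\epsilon>\tfrac12$. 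Combining the two regimes and the two terms yields (\ref{PrimeSumEq}); the only point needing care is the choice of $\epsilon$ small relative to $B$, which is harmless since $B$ is a fixed large multiple of $A$ and $\epsilon$ may depend on $L,\xi,B$, so the final constant depends only on $L$ and $\xi$ as required.
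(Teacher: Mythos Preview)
Your argument is correct and follows essentially the same approach as the paper: identify $F_{L,\xi\chi,Y}(0)$ with the left side of (\ref{LBoundEqn}), bound the explicit error term using $q\le z=\log^B(X)$, and control the Siegel term via Lemma~\ref{SiegelZerosLem}. Your write-up is in fact more careful than the paper's: the case split at $Y=X^{1/2}$ cleanly disposes of small $Y$ (which the paper glosses over), and your choice of $\epsilon$ with $B\epsilon<\tfrac12$ is what is actually needed---the paper's ``$\epsilon=1-1/(2B)$'' is evidently a slip for $\epsilon=1/(2B)$.
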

\begin{proof}
Applying Lemma \ref{SiegelZerosLem} with $\epsilon=1-1/(2B)$ to Equation \ref{LBoundEqn}, we get that
\begin{align*}
\sum_{N(\mf a)\leq Y}  \Lambda_L(\mf a) \xi(\mf a)\chi(\mf a) & = r Y - \frac{Y^\beta}{\beta} + O\left( X \exp(-c\sqrt{\log(X)})\right)\\
& = r Y + O\left( Y \exp(-c(\epsilon)\sqrt{\log(Y)}\right) + O\left( X \exp(-c\sqrt{\log(X)})\right)\\
& = r Y + O\left( X \exp(-c\sqrt{\log(X)})\right).
\end{align*}
\end{proof}

\subsubsection{Approximation of $F^\sharp$}\label{FSharpSmoothApproxSec}

\begin{prop}\label{FSharpAtRatProp}
With $L,\xi,\chi,X,Y,r$ as above, $z=\log^B(X)$,
\begin{equation*}
F_{L,\xi\chi,Y}^\sharp(0) = rY+ O\left(X\exp(-c\sqrt{\log(X)})\right).
\end{equation*}
\end{prop}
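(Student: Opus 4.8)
The plan is to evaluate $F^\sharp_{L,\xi\chi,Y}(0)$ essentially by hand: it is a sieved sum of a periodic arithmetic function, so no analytic input beyond P\'olya--Vinogradov and the smooth-number bound of Lemma~\ref{SmoothNumberLem} is required. First I would dispose of the degenerate case. If $\xi$ is not of the form $\psi\circ N_{L/\Q}$ for some Dirichlet character $\psi$, then, since $\chi$ is itself induced from $\Q$, neither is $\xi\chi$, so $F^\sharp_{L,\xi\chi,Y}\equiv 0$ by definition; moreover $\xi\chi\neq 1$ in this case, so $r=0$ and the claim is trivial. Hence assume $\xi\chi=\psi\circ N_{L/\Q}$, and choose $\psi$ primitive, so that (using $\xi=\psi_0\circ N_{L/\Q}$ with $\psi_0$ fixed, and the remark that $\Lambda_{L/\Q}(n)\psi(n)$ is independent of the choice of $\psi$) its modulus satisfies $q_\psi\ll_{L,\xi}q\ll_{L,\xi}z$. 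Then $F^\sharp_{L,\xi\chi,Y}(0)=\sum_{n\leq Y}\Lambda_{L/\Q}(n)\Lambda_z(n)\psi(n)$.

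Next I would unfold the two local factors. Using $\frac{\phi(D_L)}{|H_L|}\mathbf 1[n\bmod D_L\in H_L]=\sum_\eta\eta(n)$, the sum ranging over the Dirichlet characters $\eta$ mod $D_L$ that are trivial on $H_L$, together with $\Lambda_z(n)=C(z)\mathbf 1[\gcd(n,P(z))=1]$, one obtains
$$
F^\sharp_{L,\xi\chi,Y}(0)=C(z)\sum_{\eta}\sum_{\substack{n\leq Y\\ \gcd(n,P(z))=1}}(\psi\eta)(n).
$$
Each character $\psi\eta$ has modulus $\ll z$, and I would split according to whether it is principal. When $\psi\eta$ is non-principal, expanding $\mathbf 1[\gcd(n,P(z))=1]=\sum_{d\mid(n,P(z))}\mu(d)$ reduces the inner sum to $\sum_{d\mid P(z),\,d\leq Y}\mu(d)(\psi\eta)(d)\sum_{m\leq Y/d}(\psi\eta)(m)$; P\'olya--Vinogradov (applied to the primitive part) bounds each $m$-sum by $O(\sqrt z\log z)$, while Lemma~\ref{SmoothNumberLem} bounds the number of surviving $d$ by $S(z,Y)\ll Y^{1-1/(2B)}\exp(O(\sqrt{\log X}))$, so such an $\eta$ contributes $O\big(Y^{1-1/(2B)}\exp(O(\sqrt{\log X}))\big)$.

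When $\psi\eta$ is principal, $\psi$ agrees with $\bar\eta$ up to a principal character, so $\psi$ has conductor dividing $D_L$ and $(\psi\eta)(n)=\mathbf 1[\gcd(n,D_L)=1]$; since $D_L\mid P(z)$ once $X$ is large, the inner sum becomes $\sum_{n\leq Y,\ \gcd(n,P(z))=1}1=Y/C(z)+O\big(Y^{1-1/(2B)}\exp(O(\sqrt{\log X}))\big)$, the error being the usual sieve estimate obtained exactly as in the proof of Lemma~\ref{SmoothNumberLem} (Rankin's trick). The crucial point is that $\psi\eta$ is principal for some $\eta$ trivial on $H_L$ if and only if $\psi$ is itself trivial on $H_L$, which by global class field theory holds if and only if $\psi\circ N_{L/\Q}=1$, i.e.\ $\xi\chi=1$; thus the principal case occurs for exactly one $\eta$ when $r=1$ and for no $\eta$ when $r=0$. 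Summing over the $O(1)$ characters $\eta$ and multiplying by $C(z)=\Theta(\log\log X)$ yields $F^\sharp_{L,\xi\chi,Y}(0)=rY+O\big(C(z)Y^{1-1/(2B)}\exp(O(\sqrt{\log X}))\big)$, which is $rY+O(X\exp(-c\sqrt{\log X}))$ because $Y^{1-1/(2B)}\leq X\exp(-\log(X)/(2B))$.

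The step I expect to be the main obstacle is the class field theory bookkeeping in the principal case: pinning down exactly which characters $\psi\eta$ are principal, and in particular noticing that when $r=1$ the conductor of $\psi$ divides the fixed modulus $D_L$ — so that $q_\psi$ carries no prime factor exceeding $z$ which could otherwise perturb the sieve density $1/C(z)$ and spoil the coefficient of the main term. Everything else is routine: the non-principal contributions are governed by short character sums via P\'olya--Vinogradov, and the divisors surviving the sieve are counted by Lemma~\ref{SmoothNumberLem}.
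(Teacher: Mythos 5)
Your proposal is correct and follows the same underlying strategy as the paper: sieve out $\Lambda_z$ via M\"obius, reduce to a sum of periodic arithmetic functions, identify via class field theory exactly when a main term survives (namely when $\xi\chi$ is trivial, i.e.\ $r=1$), and absorb all errors through the smooth-number count $S(z,Y)$ of Lemma~\ref{SmoothNumberLem}. The differences are cosmetic. The paper does not expand $\Lambda_{L/\Q}$ into a sum of Dirichlet characters at all; after writing $F^\sharp = C(z)\sum_{d\mid P(z,q'D_L)}\mu(d)\chi'(d)\sum_{m\leq Y/d}\Lambda_{L/\Q}(dm)\chi'(m)$, it simply observes that the inner $m$-sum is periodic modulo $q'D_L$ with mean $\overline{\chi'}(d)\,\phi(q'D_L)/(q'D_L)$ when $\chi'$ is trivial on $H_L$ and $0$ otherwise, and uses the trivial $O(q'D_L)$ bound per $d$ where you invoke P\'olya--Vinogradov. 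Your P\'olya--Vinogradov bound is valid but unnecessary, since $S(z,Y)$ already dominates under a trivial $O(z)$ estimate on each $m$-sum. The paper also sieves by $P(z,q'D_L)$ rather than $P(z)$ so that $d$ is automatically coprime to the modulus; your version handles the same point by noting $(\psi\eta)(d)=0$ when $\gcd(d,q_\psi D_L)>1$. Your class-field-theory bookkeeping for the principal-character case --- forcing the conductor of $\psi$ to divide $D_L$ and exhibiting exactly one witnessing $\eta$ when $r=1$ --- is correct, and this is the only place requiring real care.
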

\begin{proof}
If $\xi\chi$ is not of the form $\chi'\circ N_{L/\Q}$, then $F^\sharp = 0$ and we are done.  Otherwise let $\xi\chi$ be as above with $\chi'$ a character of modulus $q'$.  We have that
\begin{align*}
F_{L,\chi',Y}^\sharp(0) & = \sum_{n\leq Y} \Lambda_{L/\Q}(n)\Lambda_z(n)\chi'(n)\\
& = C(z) \sum_{n\leq Y} \sum_{d|(P(z,q'D_L),n)}\mu(d)\Lambda_{L/\Q}(n)\chi'(n)\\
& = C(z) \sum_{d|P(z,q'D_L)} \sum_{n=dm \leq Y} \mu(d)\Lambda_{L/\Q}(n)\chi'(n)\\
& = C(z) \sum_{d|P(z,q'D_L)}\mu(d)\chi'(d)\sum_{m\leq Y/d} \Lambda_{L/\Q}(dm)\chi'(m).
\end{align*}
Consider for a moment the inner sum over $m$.  It is periodic with period $q'D_L$.  Note that the sum over a period is 0 unless $\chi'$ is trivial on $H_L$, in which case the average value is $\overline{\chi'}(d)\frac{\phi(q'D_L)}{q'D_L}$.  Since $r=1$ if $\chi'$ vanishes on $H_L$ and $r=0$ otherwise, we have that:
\begin{align*}
F_{L,\chi',Y}^\sharp(0) & = C(z)\left(\frac{\phi(q'D_L)}{q'D_L}\right)\sum_{\substack{d|P(z,qD_L)\\ d\leq Y}}\left(\frac{r\mu(d)Y}{d} + O(q'D_L)\right).
\end{align*}
The sum of error term here is at most $O\left( C(z)q S(z,Y)\right)$ which by Lemma \ref{SmoothNumberLem} is $ O\left(Y^{1-1/(2B)}\log^2(z)q \exp(O(\sqrt{\log(X)})) \right).$  The remaining term is
$$
r Y C(z)\frac{\phi(q'D_L)}{q'D_L}\sum_{\substack{d|P(z,q'D_L)\\ d\leq Y}} \frac{\mu(d)}{d}.
$$
The error introduced by extending the sum to all $d|P(z,q'D_L)$ is at most
$$O\left(YC(z)\int_Y^\infty S(z,y)y^{-2}dy \right).$$  By Lemma \ref{SmoothNumberLem} this is $$O\left(Y^{1-1/(2B)}\log(z)\exp(O(\sqrt{\log(X)}))\right).$$  Once we have extended the sum we are left with
\begin{align*}
r Y C(z)\frac{\phi(q'D_L)}{q'D_L}\sum_{\substack{d|P(z,q'D_L)}} \frac{\mu(d)}{d} & =
r Y C(z) \left(\frac{\phi(q'D_L)}{q'D_L}\right)\left( \frac{\phi(P(z,q'D_L))}{P(z,q'D_L)}\right) \\
& = r Y C(z) \left(\frac{\phi(P(z))}{P(z)} \right)\\
& = r Y.
\end{align*}
Hence
\begin{align*}
F_{L,\xi\chi,Y,z}^\sharp(0) & = rY + O\left(Y^{1-1/(2B)}\log^2(z)q^2 \exp(O(\sqrt{\log(X)})) \right)\\
& = rY + O\left(X\exp(-c\sqrt{\log(X)})\right).
\end{align*}
\end{proof}

\subsubsection{Proof of Proposition \ref{FrootProp}}

\begin{proof}
Combining Propositions \ref{FatRatProp} and \ref{FSharpAtRatProp} we obtain that
$$
F_{L,\xi\chi,Y,z}^\flat(0) = O\left(X\exp(-c\sqrt{\log(X)})\right).
$$
Our Proposition follows immediately after noting that
$$
F_{L,\xi,X,z}^\flat\left(\frac{a}{q}\right) = \frac{1}{\phi(q)}\sum_{\chi \textrm{ mod } q} G(\bar{\chi},a/q) F_{L,\xi\chi,X,z}^\flat(0).
$$
Where $G(\bar{\chi},a/q)$ is the Gauss sum
$$
G(\bar{\chi},a/q) = \sum_{x\pmod q} \bar{\chi}(x)e(ax/q).
$$
\end{proof}

\subsection{$\alpha$ Rough}\label{alphaRoughSec}

In this Section, we will show that $|F^\flat(\alpha)|$ is small for $\alpha$ not well approximated by a rational of small denominator.  We will do this by showing that both $|F(\alpha)|$ and $|F^\sharp(\alpha)|$ are small.  The proof of the latter will resemble the proof of Proposition \ref{FSharpAtRatProp}.  The proof of the former will require some machinery including some Lemmas about rational approximations and exponential sums of polynomials.

\subsubsection{Bounds on $F^\sharp$}\label{FSharpRoughSec}

\begin{prop}\label{FsharpRoughProp}
Fix $L$ a number field, and $\xi$ a Grossencharacter.  Fix $B$ and let $z=\log^B(X)$.  Let $\alpha$ be a real number.
If $\alpha$ has a rational approximation with denominator $q$, then
$$
|F_{L,\xi,z}^\sharp(\alpha)| = O\left(X\log(X)\log(z)q^{-1} +q\log(q)\log(z) + X^{1-1/(4B)}\exp(O(\sqrt{\log(X)})\right),
$$
where the implied constant may depend on $L$ and $\xi$ but nothing else.
\end{prop}
\begin{proof}
We note that the result is trivial unless $\xi=N_{L/\Q}(\chi)$ for some Dirichlet character $\chi$ of modulus $Q$.  Hence we may assume that
$$
F_{L,\xi,z}^\sharp(\alpha) = \sum_{n\leq X}\Lambda_{L/\Q}(n)\Lambda_z(n)\chi(n)e(\alpha n).
$$
Let $D_L$ be the discriminant of $L$.
We note that
\begin{align*}
F_{L,\xi,z}^\sharp(\alpha) & = \sum_{n\leq X} \Lambda_{L/\Q}(n)\Lambda_z(n)\chi(n)e(\alpha n)\\
& = C(z) \sum_{n\leq X} \sum_{d|(n,P(z,QD_L))} \mu(d) \Lambda_{L/\Q}(n)\chi(n) e(\alpha n)\\
& = C(z) \sum_{d|P(z,QD_L)} \mu(d)\chi(d) \sum_{md = n \leq X} \Lambda_{L/\Q}(dm)\chi(m)e(\alpha dm)\\
& = O\left( C(z) \sum_{d|P(z,QD_L)} \left| \sum_{m\leq X/d} \Lambda_{L/\Q}(dm)\chi(m)e(\alpha dm) \right|\right).
\end{align*}
In order to analyze the last sum, we split it up based on the residue class of $m$ modulo $QD_L$.  Each new sum is a geometric series with ratio of terms $e(\alpha QD_L d)$.  Hence we can bound this sum as $\min\left(\frac{X}{d},\frac{QD_L}{2||dQD_L\alpha||}\right)$, where $||x||$ is the distance from $x$ to the nearest integer. Therefore we have that
$$
|F_{\chi,z}^\sharp(\alpha)| = O\left( C(z) \sum_{d\leq X^{1-1/(4B)}} \min\left(\frac{X}{d},\frac{QD_L}{2||dQD_L\alpha||}\right) + C(z) X^{1/4B} S(z,X)\right).
$$
We bound the sum in the first term by looking at what happens as $d$ ranges over an interval of length $\frac{q}{3QD_L}$. We get that $dQD_L\alpha = x_0 + kQ\alpha$ for $x_0$ the value at the beginning of the interval and $k$ an integer at most $\frac{q}{3QD_L}$.  Notice that $kQD_L\alpha$ is within $\frac{1}{3q}$ of $\frac{kQD_La}{q}$, which must be distinct for different values of $k$.  Hence none of the fractional parts of $dQD_L\alpha$ can be within $\frac{1}{3q}$ of each other. Hence the sum over this range of $d$ is at most $\frac{X}{d} + \frac{2QD_L}{2/(3q)} + \frac{2QD_L}{2(2/2q)} + \ldots =O\left( \frac{X}{d} + 3qQD_L\log(q)\right).$  Furthermore the $\frac{X}{d}$ term does not show up in the first such interval, since when $d=0$, $dQD_L\alpha$ is an integer. We have $3QD_LX^{1-1/(4B)}/q+1$ of these intervals.  Therefore, the first term is at most $C(z)$ times
\begin{align*}
O&\left(  \frac{X}{(q/(3QD_L))} + \frac{X}{2(q/(3QD_L))} + \ldots + 9Q^2D_L^2\log(q) X^{1-1/(4B)}+3qQD_L\log(q)  \right) \\
& = O\left( X \log(X)q^{-1} + \log(q)X^{1-1/(4B)} + q\log(q)\right).
\end{align*}
The other term is bounded by Lemma \ref{SmoothNumberLem} as
$$
O\left( \log(z) X^{1-1/(4B)}\exp\left(O(\sqrt{\log(X)}) \right)\right).
$$
Putting these bounds together, we get that
$$
|F_{L,\xi,z}^\sharp(\alpha)| = O\left(X\log(X)\log(z)q^{-1} +q\log(q)\log(z) + X^{1-1/(4B)}\exp(O(\sqrt{\log(X)})\right).
$$
\newline
\end{proof}

\subsubsection{Lemmas on Rational Approximation}\label{RatApproxSec}

In the coming Sections, we will need some results on rational approximation of numbers.  In particular, we will need to know how often multiples of a given $\alpha$ have a good rational approximation. We have the following Lemmas.
\begin{lem}\label{RatAproxMultLem}
Let $X,Y,A$ be positive integers.  Let $\alpha$ be a real number with rational approximation of denominator $q$.  Suppose that for some $B$, that $XY B^{-1} > q > B$.  Then for all but $O\left(Y\left(A^{3/2}B^{-1/2} + A^2 B^{-1} +\log(AY)A^3 X^{-1} \right)\right)$ of the integers $n$ with $1\leq n \leq Y$, $n\alpha$ has a rational approximation with denominator $q'$ for any $XA^{-1} > q' > A$.
\end{lem}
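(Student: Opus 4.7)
The plan is to show that any \emph{bad} $n$---one for which $n\alpha$ has no rational approximation of denominator in $(A, X/A)$---must be close to a rational with denominator at most $A$, then to count such $n$ using the structural approximation $\alpha = a/q + \theta$ with $|\theta| \le 1/q^2$ and $\gcd(a,q) = 1$.

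First I would apply Dirichlet's approximation theorem to $n\alpha$ with parameter $M = X/A$, obtaining coprime integers $p$ and $Q$ with $1 \le Q \le X/A$ and $|n\alpha - p/Q| \le 1/(QM) = A/(QX) \le 1/Q^2$. If $A < Q \le X/A$, then $Q$ gives the desired rational approximation of $n\alpha$, so $n$ is good. Otherwise $Q \le A$, which implies $||Qn\alpha|| \le A/X$. Hence every bad $n$ produces some $q^* \le A$ with $||q^* n \alpha|| \le A/X$, and
\begin{equation*}
\#\{\text{bad } n \le Y\} \le \sum_{q^* = 1}^{A} \#\bigl\{n \le Y : ||q^* n \alpha|| \le A/X\bigr\}.
\end{equation*}

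Substituting $\alpha = a/q + \theta$, the inner condition $||q^* n \alpha|| \le A/X$ becomes, after absorbing the perturbation $|q^* n \theta| \le AY/q^2$, a congruence restriction forcing $q^* n a \pmod q$ to lie in one of $O(qA/X + AY/q + 1)$ residue classes; since $\gcd(a,q) = 1$, this is equivalent to a congruence on $q^* n \pmod q$. I would bound the number of resulting lattice points $(q^*, n) \in [1,A] \times [1,Y]$ by two complementary tools: (a) for each fixed $q^*$, the Vinogradov-type estimate $\#\{n \le N : ||n\beta|| < \delta\} \ll N\delta + N/q_\beta + q_\beta \delta + 1$ applied to $\beta = q^*\alpha$, whose best rational approximation has denominator $q/\gcd(q^*,q)$ provided this gcd exceeds $\sqrt{q^*}$; and (b) passing to the product $m = q^* n \in [1, AY]$ in a fixed residue class mod $q$, the divisor-sum bound $\sum_{m \le AY,\; m \equiv r \pmod q} \tau(m) \ll (AY/q + 1)\log(AY)$ for the number of admissible factorizations. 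The hypothesis $B < q < XY/B$ then converts the arithmetic terms into the $A^2 B^{-1}$ and $A^3 \log(AY) X^{-1}$ portions of the stated error.

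The main obstacle is producing the $Y A^{3/2} B^{-1/2}$ term, which does not arise from any single naive bound of the above type. I expect it to come from a careful split by $d = \gcd(q^*, q)$: the regime $d \ge \sqrt{q^*}$ is handled by approach (a) above, while the regime $d < \sqrt{q^*}$---where the natural approximation of $q^*\alpha$ of denominator $q/d$ is too coarse to feed into Vinogradov---requires either a direct lattice-point count over $(q^*, n)$ or a pigeonhole/Cauchy--Schwarz step exchanging the two variables. Optimally balancing these two regimes and summing over divisors $d$ of $q$ and then over $q^*$, while using $q > B$ to control denominators, should produce the $\sqrt{A/B}$ contribution; the logarithmic factor $\log(AY)$ emerges from the divisor-sum step in the residue-class count.
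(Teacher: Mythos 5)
Your opening reduction matches the paper's: Dirichlet applied to $n\alpha$ with parameter $X/A$ shows that a bad $n$ yields some $q^*\leq A$ with $\|q^*n\alpha\|\leq A/X$. The two arguments then part ways. You propose to bound $\#\{\text{bad } n\}$ by $\sum_{q^*\leq A}\#\{n\leq Y:\|q^*n\alpha\|\leq A/X\}$ and to estimate each inner count via the rational approximation of $q^*\alpha$. The paper instead divides $\|q^*n\alpha\|\leq A/X$ through by $q^*n$ to view it as an approximation of $\alpha$ itself -- namely $|\alpha-a'/d|<A/(q^*nX)$ for some $d\mid q^*n$ -- and then organizes the count over the finite set of fractions $a'/d$ lying that close to $\alpha$, bounding the bad $n$'s attributed to each such $d$ by $YA^2/d$.

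This re-indexing is not cosmetic, and the obstruction you correctly identify -- producing the $YA^{3/2}B^{-1/2}$ term -- is exactly where it matters. The paper introduces a free threshold $n_0$: it concedes all $n<n_0$ as possibly bad (cost $n_0$), and for $n\geq n_0$ the accuracy of the induced approximation of $\alpha$ sharpens to $A/(X\max(d,n_0))$. Comparing $a'/d$ against the known approximation $a/q$ then forces the smallest admissible $d\neq q$ to satisfy $d\geq\min(q/2,\,n_0B/(2AY))$, so its contribution is $O(YA^2B^{-1}+Y^2A^3(n_0B)^{-1})$; choosing $n_0=YA^{3/2}B^{-1/2}$ balances the concession against this and yields precisely the $YA^{3/2}B^{-1/2}$ term. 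Your decomposition by $q^*$ contains no such pivot on the size of $n$. Moreover, a union bound over $q^*\leq A$ inherently risks overcounting by a factor of $A$, so to land on the exponent $3/2$ you would need a per-$q^*$ bound of order $YA^{1/2}B^{-1/2}$ uniformly, which neither of your tools (a) nor (b) as stated supplies -- in particular not in the regime $\gcd(q^*,q)<\sqrt{q^*}$ that you yourself flag, where the perturbation $q^*|\theta|$ exceeds $(\gcd(q^*,q)/q)^2$ and $q^*\alpha$ therefore has no usable approximation of denominator $q/\gcd(q^*,q)$. The Cauchy--Schwarz and lattice-count alternatives you gesture at for that regime are not developed, and it is not clear they can recover the $n_0$-balance; as written this is an incomplete argument with a genuine gap at its central term.
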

\begin{proof}
By Dirichlet's approximation theorem, $n\alpha$ always has a rational approximation $\frac{a}{q'}$ with $q'<XA^{-1}$ and
$$
\left| n\alpha - \frac{a}{q'} \right| < \frac{1}{q'XA^{-1}}.
$$
Therefore, $n\alpha$ lacks an appropriate rational approximation only when the above has a solution for some $q'\leq A$.  If such is the case then, dividing by $n$, we find that $\alpha$ is within $(q')^{-1}n^{-1}X^{-1} A$ of some rational number of denominator $d$ so that $d|nq'$.  Note that this error is at most $\max(n,d)^{-1}X^{-1} A$.

Given such a rational approximation to $\alpha$ with denominator $d$, we claim that it contributes to at most $YA^2 d^{-1}$ bad $n$'s.  This is because there are at most $A$ values of $q'$, and for each value of $q'$, we still need that $n$ is a multiple of $\frac{d}{(d,q')} \geq d A^{-1}$.  Hence for each $q'$, there are at most $YAd^{-1}$ bad $n$. Since there are at most $A$ values of $q'$, we have at most $YA^2d^{-1}$ bad $n$.

Next, we pick an integer $n_0$.  We will now consider only $Y\geq n \geq n_0$ so that $\alpha n$ has no suitable rational approximation.  We do this by analyzing the denominators $d$ for which some rational number of denominator $d$ approximates $\alpha$ to within $X^{-1} A (\max(d,n_0))^{-1}.$  Suppose that we have some $d\neq q$ which does this.  $\alpha$ is within $q^{-2}$ of a number with denominator $q$, and within $X^{-1}n_0^{-1} A$ of one with denominator $d$.  These two rational numbers differ by at least $(dq)^{-1}$ and therefore,
$$
(dq)^{-1} \leq q^{-2} + X^{-1} A n_0^{-1}.
$$
Hence, either $dq^{-1}$ or $X^{-1}A n_0^{-1} dq$ is at least $\frac{1}{2}.$  Hence, either $d\geq\frac{q}{2}$, or
$$
d \geq \frac{X n_0}{2 A q} \geq \frac{n_0 B}{2 A Y}.
$$
Therefore, the smallest such $d$ is at least the minimum of $\frac{q}{2}$ and $\frac{n_0 B}{2 A Y}$.

Next, suppose that we have two different such denominators, say $d$ and $d'$.  The fractions they represent are separated by at least $(dd')^{-1}$ and yet are both close to $\alpha$.  Therefore,
$$
(dd')^{-1} \leq X^{-1} A (d^{-1} + d'^{-1}).
$$
Therefore, we have that $\max(d,d') \geq \frac{X}{2A}.$  Hence, there is at most one such denominator less than $\frac{X}{2A}$.

Next, we wish to bound the number of such denominators $d$ in a dyadic interval $[K,2K]$.  We note that the corresponding fractions are all within $X^{-1}AK^{-1}$ of $\alpha$, and that any two are separated from each other by at least $(2K)^{-2}$.  Therefore, the number of such $d$ is at most $1+8KX^{-1}A$.

To summarize we potentially have the following $d$ each giving at most $YA^2d^{-1}$ bad $n$'s.
\begin{itemize}
\item One $d$ at least $\min\left(\frac{q}{2},\frac{n_0 B}{2AY}\right)$.
\item For each diadic interval $[K,2K]$ with $K\geq \frac{X}{2A}$ at most $10KX^{-1}A$ such $d$'s
\end{itemize}
Notice that there are $\log(2AY)$ such diadic intervals, and that each contributes at most $10YA^3X^{-1}$ bad $n$'s.
We also potentially have $n_0$ bad $n$'s from the numbers less than $n_0$.  Hence the number of $n$ for which there is no suitable rational approximation of $n\alpha$ is at most
$$
 O\left( n_0 + YA^2 B^{-1} + Y^2 A^3 B^{-1} n_0^{-1} + \log(AY)YA^{3} X^{-1}\right).
$$
Substituting $n_0 = Y A^{3/2} B^{-1/2}$ yields our result.
\end{proof}

We will also need the following related Lemma:
\begin{lem}\label{RatApproxStructureLem}
Let $X,A,C$ be positive integers.  Let $\alpha$ be a real number with rational approximation of denominator $q$.  Suppose that for some $B>2A$, that $XB^{-1} > q > B$.  Then there exists a set $S$ of natural numbers so that
\begin{itemize}
\item elements of $S$ are of size at least $\Omega(BA^{-1})$.
\item The sum of the reciprocals of the elements of $S$ is $O(A^2 B^{-1}+X^{-1}A^4 C)$.
\item for all positive integers $n\leq C$, either $n$ is a multiple of some element of $S$ or $n\alpha$ has a rational approximation with some denominator $q'$ with $XA^{-1}n^{-1} > q' > A$.
\end{itemize}
\end{lem}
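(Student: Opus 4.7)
\emph{Construction of $S$.} For each positive integer $n\leq C$ that is \emph{bad}, meaning $n\alpha$ admits no rational approximation with denominator in $(A,\,XA^{-1}n^{-1})$, Dirichlet's approximation theorem applied with $Q = XA^{-1}n^{-1}$ nonetheless produces $(a,q')$ with $\gcd(a,q')=1$, $q'\leq A$, and $|n\alpha - a/q'|\leq An/(q'X)$. Reducing $a/(nq')$ to lowest terms as $a'/d$, one finds $d = nq'/\gcd(a,n)$ and $m := d/q' = n/\gcd(a,n)$, which divides $n$. I would let $S$ be the set of all such $m$'s, ranging over bad $n\leq C$. By construction, every bad $n\leq C$ is a multiple of an element of $S$, so the third bullet holds.

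\emph{Lower bound on elements of $S$.} Mirroring the analysis in Lemma~\ref{RatAproxMultLem}, compare $a'/d$ with the given approximation $p/q$ of $\alpha$. If $a'/d\neq p/q$ then $|a'/d - p/q|\geq 1/(dq)$, and combining this with $|\alpha - p/q|\leq 1/q^2$ and $|\alpha - a'/d|\leq A/(q'X)$ via the triangle inequality yields
$$
\frac{1}{dq} \leq \frac{1}{q^2} + \frac{A}{q'X}.
$$
A two-case split on which right-hand term dominates, combined with $q\leq X/B$ and $q'\leq A$, forces $d\geq \min(q/2,\, q'B/(2A)) = q'B/(2A)$ and hence $m\geq B/(2A)$. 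In the degenerate case $a'/d = p/q$ one has $q'\mid q$ and $m = q/q'\geq q/A\geq B/A$. Uniformly $m = \Omega(B/A)$.

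\emph{Sum of reciprocals.} Writing $d = q'm$, the inequality $|\alpha - a'/d|\leq A/(q'X)$ becomes $|q'\alpha - a'/m|\leq A/X$. For fixed $q'\leq A$ I would count reduced fractions $a'/m$ with $m\in[K,2K]$ satisfying $|q'\alpha - a'/m|\leq A/X$: the standard spacing bound $1/(4K^2)$ between distinct reduced fractions with denominators in $[K,2K]$ limits the count to $O(1 + AK^2/X)$, so the contribution to $\sum 1/m$ from this window is $O(1/K + AK/X)$. Summing dyadically in $K$ from $B/(2A)$ up to $C$, the $1/K$ piece is dominated by the smallest $K$ (giving $O(A/B)$) and the $AK/X$ piece by the largest (giving $O(AC/X)$). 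Summing over $q'\leq A$ gives a total of $O(A^2/B + A^2C/X)$, comfortably within the claimed $O(A^2B^{-1} + X^{-1}A^4C)$.

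The main obstacle is the dyadic count: one has to carefully use the spacing between reduced fractions of bounded denominator to keep the per-window count small, and track the loss when summing over $q'\leq A$. A secondary subtlety is the lower bound step, where the degenerate case $a'/d = p/q$ must be peeled off separately from the generic triangle inequality argument, since otherwise the separation bound $|a'/d - p/q|\geq 1/(dq)$ is unavailable.
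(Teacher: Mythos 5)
Your proof is correct and follows essentially the same route as the paper: apply Dirichlet's theorem with parameter $XA^{-1}n^{-1}$ to force $q'\leq A$ for a bad $n$, pass from $|n\alpha-a/q'|\leq An/(q'X)$ to a rational approximation $a'/d$ of $\alpha$, extract a divisor of $n$ from $d$ to place in $S$, and then control the size and reciprocal sum of $S$ by comparing against the given approximation $p/q$ via the triangle inequality together with a dyadic spacing count of nearby reduced fractions. Your bookkeeping choice of $m = d/q' = n/\gcd(a,n)$ and organizing the reciprocal sum over $q'\leq A$ (rather than the paper's enumeration of denominators $d$ together with divisors $D\leq A$ of $d$, which caps $d$ at $AC$ and incurs an extra factor of $A$) is a minor streamlining that in fact yields the slightly sharper bound $O(A^2B^{-1}+A^2CX^{-1})$.
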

\begin{proof}
We use the same basic techniques as the proof of Lemma \ref{RatAproxMultLem}.

We begin by letting $S$ be the set of all integers of the form $\frac{d}{D}$ for some integers $d,D$ with $A\geq D$, $D|d$, $d\leq AC$ and
$$
\left|\alpha - \frac{a}{d} \right| \leq  \frac{1}{XA^{-1}},
$$
for some integer $a$ relatively prime to $d$.

We begin by verifying the third claim for this set $S$. Note that $n\alpha$ always has a rational approximation $\frac{a}{q'}$ accurate to within $\frac{1}{q'XA^{-1}n^{-1}}$ with $q'<XA^{-1}n^{-1}$.  This means that we have an appropriate rational approximation of $n\alpha$ unless this $q'$ is less than $A$. If this happens, it is the case that
$$
\left|\alpha - \frac{a}{nq'} \right| \leq \frac{1}{q'XA^{-1}} \leq \frac{1}{XA^{-1}}.
$$
Letting $d=nq'/\gcd(a,nq')\leq AC$ and $D=q'/\gcd(a,q')$, we see that $n$ is a multiple of $\frac{d}{D}$, which is in $S$ since
$$
\left|\alpha - \frac{a/\gcd(a,nq')}{d} \right| \leq \frac{1}{XA^{-1}}.
$$

To verify the first property, we note that if we have integers $a$ and $d$, with $d$ not a multiple of $q$, so that
$$
\left|\alpha - \frac{a}{d} \right| \leq  \frac{1}{XA^{-1}},
$$
then $\alpha$ is within $q^{-2}$ of a rational number of denominator $q$ and within $X^{-1}A$ of one of denominator $d$.  Hence,
$$
(dq)^{-1} \leq q^{-2} + X^{-1}A.
$$
Therefore,
$$
d\geq \min\left( \frac{q}{2},\frac{X}{2A}\right)\geq \frac{q}{2}.
$$
Therefore, every element of $S$ is of the form $\frac{d}{D}$ with $d\geq \frac{q}{2} \geq \frac{B}{2}$ and $D\leq A$. Thus every such element is $\Omega(BA^{-1})$.

Finally, we verify the second property. To each element $\frac{d}{D}$ of $S$, we may associate the rational number $\frac{a}{d}$ so that
$$
\left|\alpha - \frac{a}{d} \right| \leq  \frac{1}{XA^{-1}}.
$$
The sum of the reciprocals of elements of $S$ associated to this fraction is at most $\sum_{D\leq A} (d/D)^{-1} = O(A^2d^{-1})$. Given two such approximations, $\frac{a}{d}$ and $\frac{a'}{d'}$, they must differ by at most $\frac{2}{XA^{-1}}$, and thus
$$
(dd')^{-1} \leq \frac{2}{XA^{-1}}.
$$
Therefore the second largest such $d$ is at least $\sqrt{XA^{-1}/2}.$

Next we consider the number of such approximations with $d$ lying in a diadic interval $[K,2K]$. All of these approximations are within $X^{-1}A$ of $\alpha$ and are separated from each other by at least $\frac{1}{4K^2}$. Therefore, taking $K>\sqrt{XA^{-1}/2}$, the number of such approximations is $O(X^{-1}AK^2)$, so the contribution they make to the sum of the reciprocals of the elements of $S$ is at most $O(X^{-1}A^3 K)$. Summing this over $K$ a power of 2 of size at most $AC$, yields a total contribution of $O(X^{-1}A^4 C)$. Thus, the sum of the reciprocals of elements of $S$ corresponding to all of the appropriate rational approximations except for the one of minimal denominator is at most $O(X^{-1}A^4 C)$. The contribution coming from the approximation with minimal denominator consists of the sum of reciprocals of $O(A)$ terms each of size $\Omega(B/A)$, and is thus $O(A(B/A)^{-1})=O(A^2 B^{-1})$. Combining this with the contribution from the other rational approximations yields our result.
\end{proof}

We will be using Lemma \ref{RatApproxStructureLem} to bound the number of ideals of $L$ so that $N(\mf a)\alpha$ has a good rational approximation.  In order to do this we will also need the following:

\begin{lem}\label{NormMultLem}
Fix $L$ be a number field.  Let $n$ be a positive integer, and let $X$ and $\epsilon$ be positive real numbers.  Then for any $\epsilon>0$, we have that:
$$
\sum_{\substack{n | N(\mf a)\\N(\mf a) < X}} \frac{1}{N(\mf a)} = O \left(\frac{\log(X) n^\epsilon}{n} \right),
$$
$$
\sum_{\substack{n | N(\mf{ab})\\N(\mf{ab}) < X}} \frac{1}{N(\mf{ab})} = O \left(\frac{\log^2(X) n^\epsilon}{n} \right),
$$
where the implied constant depends on $L$ and $\epsilon$, but nothing else.
(The first sum above is over ideals $\mf a$ so that $n|N(\mf a)$ and $N(\mf a)\leq X$, the second over pairs of ideals $\mf a$ and $\mf b$, so that $N(\mf a \cdot \mf b)$ satisfies the same conditions).
\end{lem}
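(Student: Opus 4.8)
The plan is to reduce both estimates to a single fact about the Dirichlet series $\sum_{\mf a} N(\mf a)^{-s}$, namely that $\zeta_L(s) = \sum_{\mf a} N(\mf a)^{-s}$ has at most a simple pole at $s=1$ and that the number of ideals of norm $m$ is $O(m^\epsilon)$ for every $\epsilon>0$ (this is standard: $\#\{\mf a : N(\mf a)=m\}$ is multiplicative and bounded by the number of ways to write $m$ as a product of prime-power norms, hence by $d(m)^d = O(m^\epsilon)$ where $d$ is the degree). First I would rewrite the left-hand side of the first estimate by grouping ideals according to their norm: writing $r_L(m) = \#\{\mf a : N(\mf a)=m\}$, we have
$$
\sum_{\substack{n\mid N(\mf a)\\ N(\mf a)<X}} \frac{1}{N(\mf a)} = \sum_{\substack{m<X\\ n\mid m}} \frac{r_L(m)}{m} = \sum_{\substack{k < X/n}} \frac{r_L(nk)}{nk}.
$$
Now $r_L$ is multiplicative and submultiplicative in the weak sense that $r_L(nk) \le r_L(n)\, r_L(k')$ where... actually the cleanest route is just the crude bound $r_L(nk) \le r_L(n) \cdot (nk)^{\epsilon/2}/n^{\epsilon/2}$; more simply, bound $r_L(nk) = O((nk)^\epsilon) = O(X^\epsilon)$ and also use $r_L(n) = O(n^\epsilon)$ to get $r_L(nk) \le r_L(n) r_L(k) \cdot (\text{something})$. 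The honest and sufficient move: since $r_L(nk)\le r_L(n)r_L(k)$ fails in general but $r_L$ is multiplicative, write $n = \prod p^{a_p}$ and $k$ and use $r_L(nk) \le r_L(n) \cdot \prod_{p} r_L(p^{v_p(k)+a_p})/r_L(p^{a_p})$; each local factor is $O(k_p^{\epsilon})$. In the interest of not grinding, I would simply invoke the bound $r_L(nk) = O_\epsilon(n^\epsilon k^\epsilon)$ and then
$$
\sum_{k<X/n} \frac{r_L(nk)}{nk} = O\!\left(\frac{n^\epsilon}{n} \sum_{k<X/n} \frac{k^\epsilon}{k}\right) = O\!\left(\frac{n^\epsilon}{n}\, (X/n)^\epsilon \log(X/n)\right) = O\!\left(\frac{n^{\epsilon}\,X^{\epsilon}\log X}{n}\right),
$$
and since $\epsilon$ is arbitrary this is the claimed $O(X\log(X)n^\epsilon/n)$ after renaming $\epsilon$ (absorbing the $X^\epsilon$ into the $\log X$ is wrong, so really one proves it for $\epsilon/2$ in the exponent of $k$ and notes $(X/n)^{\epsilon/2} = O_\epsilon((X/n)^\epsilon/\log)$; cleanest is to state the result with $n^\epsilon$ and accept a genuine $X^\epsilon$, but since the paper writes $X\log X$, I would instead bound $\sum_{k<X/n} k^{-1+\epsilon'} $ trivially by $O_{\epsilon'}((X/n)^{\epsilon'}) $ and choose $\epsilon'$ small relative to the target $\epsilon$, giving $O(n^\epsilon \cdot X \cdot \log X / n)$ with room to spare).

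For the second estimate, I would reduce to the first by noting that the number of pairs $(\mf a,\mf b)$ with $N(\mf a \mf b) = m$ is $\sum_{d\mid m} r_L(d) r_L(m/d) = (r_L * r_L)(m)$, which is still $O_\epsilon(m^\epsilon)$ (convolution of two $O(m^\epsilon)$ multiplicative functions), but the extra divisor sum is exactly what produces the second logarithm: expanding,
$$
\sum_{\substack{n\mid N(\mf{ab})\\ N(\mf{ab})<X}} \frac{1}{N(\mf{ab})} = \sum_{\substack{j,\ell\\ n\mid j\ell,\ j\ell<X}} \frac{r_L(j)r_L(\ell)}{j\ell} \le \sum_{j<X} \frac{r_L(j)}{j} \sum_{\substack{\ell < X/j\\ n\mid j\ell}} \frac{r_L(\ell)}{\ell}.
$$
The inner sum requires $\ell$ to be a multiple of $n/(n,j)$, so by the first estimate (applied with $n/(n,j)$ in place of $n$ and $X/j$ in place of $X$) it is $O\big(\frac{X}{j}\log X \cdot (n/(n,j))^{\epsilon-1} \cdot (n,j)/n \cdot n\big)$ — more carefully, $O\big(\frac{X/j}{n/(n,j)} \log(X) n^\epsilon\big)$. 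Summing $\frac{r_L(j)}{j} \cdot \frac{X (n,j)}{jn}\log X \, n^\epsilon$ over $j<X$, the sum $\sum_j \frac{r_L(j)(n,j)}{j^2}$ converges (the $(n,j)$ growth is killed by $j^{-2}$ and contributes only an $n^\epsilon$), except that I have lost a factor and should instead split $j<X$ and bound $\sum_{j<X} r_L(j)/j = O(X^\epsilon \log X)$ — hmm, this gives a third log. The right bookkeeping: pull out $\sum_{j<X} r_L(j)(n,j)^{?}/j$ which by the first lemma's method is $O(n^\epsilon \log^2 X)$ when we do \emph{not} have the $(n,j)/n$ saving available, but we \emph{do} have it from the congruence $n \mid j\ell$. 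I would organize it as: $\sum_{n\mid j\ell} \frac{r_L(j)r_L(\ell)}{j\ell}$, factor $n = n_1 n_2$ with $n_1\mid j^\infty$ portion absorbed — concretely write $e = (n,j^\infty)$... The main obstacle is precisely this second-estimate bookkeeping: getting \emph{exactly} two logarithms (not one, not three) out of the double sum with the $n^{\epsilon}/n$ saving intact. The cleanest fix I would actually write down is to prove the first estimate via the contour-integral / Perron method used for $S(z,Y)$ elsewhere in the paper, i.e. $\sum_{n\mid N(\mf a), N(\mf a)<X} N(\mf a)^{-1} = \frac{1}{2\pi i}\int (\cdots) \zeta_L(s+1)\, n^{-s-1}\, \frac{X^s}{s}\, ds \cdot (\text{local factor at primes dividing }n)$, shifting to $\Re(s)=\epsilon$ and picking up the pole at $s=0$ from $\zeta_L(s+1)/s$, which has a double pole there (simple pole of $\zeta_L$ times $1/s$), yielding the single $\log X$; for the pair version one integrates against $\zeta_L(s+1)^2$, whose product with $1/s$ has a triple pole — no, a double pole of $\zeta_L^2$ times $1/s$ gives $\log^2 X$, exactly right. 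This contour approach makes the logarithm count automatic and transparent, so that is the route I would take, with the elementary divisor-sum argument above as a sanity check.

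Thus the steps, in order: (1) record $r_L(m) = O_\epsilon(m^\epsilon)$ and $(r_L * r_L)(m) = O_\epsilon(m^\epsilon)$; (2) prove the first identity $\sum_{n\mid N(\mf a), N(\mf a)<X} N(\mf a)^{-1} = \sum_{k<X/n} r_L(nk)/(nk)$ and estimate it either elementarily ($O(n^\epsilon X^\epsilon \log X / n)$, then shrink $\epsilon$) or via a Perron integral against $\zeta_L(s+1)$, locating the double pole at $s=0$ to get the single $\log X$; (3) for the second, expand $N(\mf{ab})$-sums into a double sum over $(j,\ell) = (N(\mf a), N(\mf b))$ weighted by $r_L$, apply the first estimate to the inner $\ell$-sum with modulus $n/(n,j)$, and sum over $j$, or equivalently run the Perron integral against $\zeta_L(s+1)^2$, whose interaction with $1/s$ produces exactly the second logarithm. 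The only real subtlety, and where I expect to spend the most care, is confirming that the congruence condition $n \mid N(\mf a)$ (resp. $n \mid N(\mf{ab})$) contributes only the advertised $n^{\epsilon}/n$ and no stray logarithmic factors — handled by noting the local Euler factors of $\zeta_L$ at primes dividing $n$ are $O_\epsilon(p^{\epsilon})$ uniformly and multiply to $O_\epsilon(n^\epsilon)$, while the global pole structure is unchanged.
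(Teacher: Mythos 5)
Your proposal is a genuinely different route from the paper's and, as you yourself concede, it leaves the second estimate unfinished. The paper's proof uses one short device for both estimates: it promotes the congruence $n\mid N(\mf a)$ to an ideal divisibility. Starting from $\mf a_0=(1)$ and greedily adjoining prime ideal factors of $\mf a$ lying over the prime divisors $p$ of $n$, one produces an ideal $\mf a_0\mid \mf a$ with $N(\mf a_0)=nm$ where $0\le v_p(m)<d$ for each $p\mid n$ (each adjoined prime has norm $\le p^d$, so the overshoot at $p$ is at most $d-1$). There are $O(n^\epsilon)$ possible such norms, and $O(n^\epsilon)$ ideals of each such norm, hence $O(n^\epsilon)$ candidate $\mf a_0$, each with $N(\mf a_0)\ge n$. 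Then $\sum_{N(\mf a_0 \mf b)<X}N(\mf a_0\mf b)^{-1}\le N(\mf a_0)^{-1}\sum_{N(\mf b)<X}N(\mf b)^{-1}=O(n^{-1}\log X)$, and summing over the $O(n^\epsilon)$ values of $\mf a_0$ yields $O(n^{\epsilon-1}\log X)$. The second estimate runs identically with the divisor weight $\tau(\mf a_0\mf b)\le\tau(\mf a_0)\tau(\mf b)$ absorbed into the $n^\epsilon$, giving $O(n^{\epsilon-1}\log^2 X)$. Note these bounds have \emph{no} factor of $X$ --- the $X$ on the right-hand side of the lemma as stated is evidently a transcription slip, and it is the sharper bound without it that is invoked in the proof of Proposition~\ref{FBoundRoughProp}.

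With that in mind there are two problems with your approach. First, your elementary computation for the first estimate produces $O(X^{\epsilon'}/n)$, which is dominated by $n^\epsilon X\log X/n$ only thanks to the spurious extra $X$; it does not recover the $\log X$ bound that the paper both proves and actually uses downstream. Second, you do not close the second estimate at all: you note that you have ``lost a factor'' and that ``the main obstacle is precisely this second-estimate bookkeeping,'' and then pivot to a Perron sketch against $\zeta_L(s+1)^2/s$ that you also leave unexecuted --- in particular the Euler-factor analysis at $p\mid n$, which you single out as ``the only real subtlety,'' is never carried out. The convolution bookkeeping is in fact salvageable: with the correct $j^{-2}$ (not $j^{-1}$) weight coming from the inner $\ell$-sum, one finds $\sum_{j<X}r_L(j)\,(n,j)\,(n/(n,j))^\epsilon\, j^{-2}=O(n^{\epsilon'})$ by splitting on $d=(n,j)$, so the feared third logarithm never arises --- but as written your argument stops short of this. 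The instructive contrast is that the paper's upgrade from norm-divisibility to ideal-divisibility eliminates the entire bookkeeping issue at the outset, reducing both estimates to $\sum_{N(\mf b)<Y}N(\mf b)^{-1}=O(\log Y)$ and $\sum_{N(\mf b)<Y}\tau(\mf b)N(\mf b)^{-1}=O(\log^2 Y)$ with no Dirichlet convolution or contour shift required.
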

\begin{proof}
We will prove the first of the two equations and note that the second follows from a similar argument.  Let $d=[L:\Q]$.  Let $p_1,\ldots,p_k$ be the distinct primes dividing $n$.  We claim that for such an ideal $\mf a$ must be a multiple of some ideal $\mf a_0$ with $N(\mf a_0)=nm$ with $m=\prod_{i=1}^k p_i^{a_i}$ for some $0\leq a_i < d$.  We obtain this by starting with the ideal $\mf a_0=(1)$ and repeatedly multiplying by primes of $\mf a/\mf a_0$ whose norm is a power of one of the $p_i$ that do not yet divide $N(\mf a_0)$ sufficiently many times.  Since this prime has norm no bigger than $p_i^d$ we cannot overshoot by more than $d-1$ factors of any $p_i$.  We note that the number of possible values of $m$ is $k^d$.  Since $k=O(\log(n))$ this is $O(n^{\epsilon/2})$.  For each value of $m$ there are $O(n^{\epsilon/2})$ ideals of norm exactly $nm$, and hence there are $O(n^\epsilon)$ possible ideals $\mf a_0$.

Thus we have
\begin{align*}
\sum_{\substack{n | N(\mf a)\\N(\mf a) < X}} \frac{1}{N(\mf a)} & = \sum_{\mf{a}_0} \sum_{N(\mf{b})\leq X/N(\mf{a}_0)} \frac{1}{N(\mf{a}_0\mf{b})}\\
& \leq \sum_{\mf{a}_0} \frac{1}{N(\mf{a}_0)} \sum_{N(\mf{b}) \leq X} \frac{1}{N(\mf{b})}\\
& \ll \sum_{\mf{a}_0} \frac{\log(X)}{n}\\
& \ll \frac{\log(X) n^{\epsilon}}{n}.
\end{align*}
\end{proof}

\subsubsection{Lemmas on Exponential Sums}\label{ExpSumsSec}

We will need a Lemma on the size of exponential sums of polynomials along the lines of Lemma 20.3 of \cite{IK}.  Unfortunately, the $X^\epsilon$ term that shows up there will be unacceptable for our application.  So instead we prove:

\begin{lem}\label{polySumBoundLem}
Pick a positive integer $X$.  Let $[X] = \{1,2,\ldots,X\}$.  Let $P$ be a polynomial with leading term $c x^k$ for some integer $c\neq 0$.  Let $\alpha$ be a real number with a rational approximation of denominator $q$.  Then
$$
\left| \sum_{x\in [X]} e(\alpha P(x))\right| \ll |c| X \left(\frac{1}{q} + \frac{1}{X} + \frac{q}{X^k} \right)^{10^{-k}},
$$
where the implied constant depends on $k$, but not on the coefficients of $P$.
\end{lem}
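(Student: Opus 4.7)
The proof is a variant of Weyl's inequality that keeps the leading coefficient $c$ explicit and eliminates the usual $X^{\epsilon}$ loss, trading these improvements for a weaker exponent $10^{-k}$ in place of the sharp Weyl exponent $2^{1-k}$. The argument has two main phases: first, apply Weyl's differencing $k-1$ times to reduce the exponential sum over a polynomial of degree $k$ to a multi-sum of linear exponential sums; second, bound this multi-sum using the rational-approximation structural results of Section \ref{RatApproxSec}.

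For the differencing step, write $\Delta_h P(x) = P(x+h) - P(x)$, which is a polynomial in $x$ of degree $k-1$ with leading coefficient $k c h$. The Cauchy--Schwarz identity $|S|^2 = \sum_{|h|<X} T_h$, where $T_h = \sum_{x\in I_h} e(\alpha\Delta_h P(x))$, when iterated $k-1$ times yields the standard inequality
\begin{equation*}
|S|^{2^{k-1}} \ll X^{2^{k-1}-1}\sum_{|h_1|,\dots,|h_{k-1}|<X}\min\!\Bigl(X,\,\bigl\|k!\,c\,h_1\cdots h_{k-1}\,\alpha\bigr\|^{-1}\Bigr),
\end{equation*}
the innermost geometric sum in $x$ being bounded by $\min(X,\|\beta\|^{-1})$ applied to $\beta = k!\,c\,h_1\cdots h_{k-1}\,\alpha$. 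Collecting terms by $m = h_1\cdots h_{k-1}$ and writing $\beta_0 = k!\,c\,\alpha$, which inherits from $\alpha$ a rational approximation with denominator $Q \geq q/(k!|c|)$, the multi-sum takes the form $\sum_m r(m)\min(X,\|m\beta_0\|^{-1})$, where $r(m)$ is the number of ordered factorizations of $m$ into $k-1$ factors in $[-X,X]$.

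For the estimate on the multi-sum, I would apply Lemma \ref{RatApproxStructureLem} to $\beta_0$: it produces a sparse set $\mathcal{S}$ of ``bad'' moduli with small reciprocal sum, such that every $m$ not divisible by any element of $\mathcal{S}$ has $m\beta_0$ well-approximated by a rational of moderate denominator. For such ``good'' $m$, the contribution to the min-sum is controlled by the standard dyadic argument on $\sum\min(X,\|m\beta_0\|^{-1})$; for the few ``bad'' $m$, the trivial bound $\min(X,\cdot)\leq X$ is combined with the reciprocal-sum estimate of Lemma \ref{RatApproxStructureLem}. The principal obstacle is bookkeeping: the divisor-function bound $r(m) \ll m^{\epsilon}$ ordinarily forces an $X^{\epsilon}$ loss, and absorbing this into the main term requires exploiting the strict inequality $10^{-k} < 2^{1-k}$, which leaves polynomial slack in $X$ to be traded away. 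Likewise, tracking the $|c|$ factor through the passage $\alpha \mapsto \beta_0$ and back across the $2^{k-1}$-th-root step demands care. When the claimed bound already exceeds the trivial bound $X$, the inequality holds vacuously; the nontrivial regime forces both $q$ and $X$ to be polynomially large in $|c|$, which is precisely the setting in which Lemma \ref{RatApproxStructureLem} is genuinely useful.
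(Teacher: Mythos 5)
Your instinct that the divisor function is ``the principal obstacle'' is correct, and that obstacle is in fact fatal to the route you sketch. After $k-1$ rounds of Weyl differencing and collecting by $m = h_1 \cdots h_{k-1}$, the factor $r(m)$ can only be bounded by $d_{k-1}(|m|) \ll |m|^\epsilon$, and the resulting $X^\epsilon$ loss cannot be traded away against the gap between $10^{-k}$ and $2^{1-k}$. The point is that the available ``slack'' is
$$
\Bigl(\tfrac{1}{q} + \tfrac{1}{X} + \tfrac{q}{X^k}\Bigr)^{-(2^{1-k}-10^{-k})},
$$
and in the regime where this lemma is actually invoked downstream (Lemma~\ref{IdealSumLem} feeding into Proposition~\ref{FBoundRoughProp} with $q$ as small as a fixed power of $\log X$), the parenthesized quantity is only polylogarithmically small, so the slack is polylogarithmic, not polynomial, in $X$. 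An $X^\epsilon$ loss swamps it completely. Indeed, eliminating exactly this $X^\epsilon$ term is the paper's stated reason (in Section~\ref{ExpSumsSec}) for not simply citing the standard result.

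The paper's proof sidesteps the divisor function entirely by \emph{not} iterating Weyl differencing all the way down and never forming a product of shift variables. It performs one squaring, writes $a = b+n$, observes that $\bigl(P(b+n)-P(b)\bigr)/n$ is a polynomial in $b$ of degree $k-1$ with integer coefficients and leading coefficient $nck$, and then applies the lemma \emph{inductively}, term by term in $n$, to the real number $n\alpha$. The sum over $n$ is then split using Lemma~\ref{RatAproxMultLem} (not Lemma~\ref{RatApproxStructureLem}): the at most $O\bigl(X(B^{-1/5}+\log(X)B^{3/5}X^{1-k})\bigr)$ exceptional $n$ for which $n\alpha$ lacks a rational approximation with denominator in $[B^{1/5}, X^{k-1}B^{-1/5}]$ (here $B=\min(q,X^k/q)$) are handled by the trivial bound $O(X)$, and every other $n$ is fed into the inductive hypothesis with the new denominator. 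Because the shifts $n$ are never multiplied together, no divisor function appears, and the induction also cleanly tracks the leading coefficient (which picks up a factor of $nk$ at each step, with the $n$-dependence already accounted for via the rational approximation of $n\alpha$ rather than appearing in the coefficient bound). To repair your argument you would essentially have to abandon the collection by $m$ and instead control, for each partial product $h_1\cdots h_{j}$, how often it yields a poor rational approximation of $\alpha$ --- at which point you have reinvented the paper's single-step induction.
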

Note that the $10^{-k}$ in the exponent is not optimal and was picked for convenience.
\begin{proof}
We proceed by induction on $k$.  We take as a base case $k=1$.  Then we have that $P$ is a linear function with linear term $c$.  $\alpha$ is within $q^{-2}$ of a rational number of denominator $q$.  Therefore $c\alpha$ is within $c q^{-2}$ of a number of denominator between $q c^{-1}$ and $q$.  If $c\geq q/2$, there is nothing to prove.  Otherwise, $c\alpha$ cannot be within $q^{-1} - cq^{-2} = O(q^{-1})$ of an integer.  Therefore the sum is at most $O(\min(X,q))$, which clearly satisfies the desired inequality.

We now perform the induction step.  We assume our inequality holds for polynomials of smaller degree.  Squaring the left hand side of our inequality, we find that
\begin{align*}
\left| \sum_{x\in [X]} e(\alpha P(x))\right| & = \left(\sum_{a,b\in [X]} e(\alpha(P(a)-P(b))) \right)^{1/2}.
\end{align*}
Breaking the inner sum up based on the value of $n=a-b$, we note that $P(n+b)-P(b)$ is a polynomial in $b$ of degree $k-1$ with leading term $nckx^{k-1}$.  Letting $[X_n]=\{1,2,\ldots,X\}\cap\{1-n,2-n,\ldots,X-n \}$, we are left with at most
\begin{align*}
&\left( \sum_{n\in [-X,X]} \left|  \sum_{b\in [X_n]} e(\alpha(P(b+n)-P(b)))\right|\right)^{1/2}
\\  = & \left( \sum_{n\in [-X,X]} \left|\sum_{b\in [X_n]} e\left((n\alpha)\left(\frac{P(b+n)-P(b)}{n}\right)\right)\right|\right)^{1/2}.
\end{align*}

Let $B=\min(q,X^k/q)$.
We consider separately the terms in the above sum where $n\alpha$ has no rational approximation with denominator between $B^{1/5}$ and $X^{k-1}B^{-1/5}$. By Lemma \ref{RatAproxMultLem} with parameters $A=B^{1/5},B=B,Y=X,X=X^{k-1},$ the number of such $n$ is at most $O(X(B^{-1/5} +\log(X)B^{3/5}X^{1-k}))$.  Each of those terms contributes $O(X)$ to the sum and hence together they contribute at most
$$
O(X(B^{-1/10} + \log(X)B^{3/10}X^{(1-k)/2})).
$$
Which is within the required bounds.

For the other terms, the inductive hypothesis tells us that the sum for fixed $n$ is at most
$$
O\left(|c|(X-n)\left(B^{-1/5} + \frac{1}{X-|n|} + \frac{X^{k-1}B^{-1/5}}{(X-|n|)^{k-1}}\right)^{-10^{k-1}}\right).
$$
Summing over $n$ and taking a square root gives an appropriate bound.
\end{proof}

We apply this Lemma to get a bound on exponential sums of norms of ideals of a number field.  In particular we show that:
\begin{lem}\label{IdealSumLem}
Fix $L$ a number field of degree $d$, and $\xi$ a Grossencharacter of modulus $\mf m$.  Then given a positive number $X$ and a real number $\alpha$ which has a rational approximation of denominator $q$, we have that
\begin{equation}\label{IdealSumEqn}
\left|\sum_{N(\mf a) \leq X} \xi(\mf a) e(\alpha N(\mf a)) \right| = O\left(X\left(\frac{1}{q} +\frac{1}{X^{1/d}} +\frac{q}{X} \right)^{10^{-d}/2} \right).
\end{equation}
Where the implied constant depends only on $L$ and $\xi$.
\end{lem}
\begin{proof}
We begin by dividing the sum in question into pieces based on the class of $\mf{a}$ modulo multiplication by elements of $\mathcal{O}_L$ congruent to $1$ modulo $\mf{m}$. It is well known that there are only finitely many such classes, and thus it suffices to show that for any such class, the sum over $\mf{a}$ in that class is bounded by the right hand side of Equation \eqref{IdealSumEqn}. We will henceforth proceed to bound the sum over $\mf{a}$ in one such class.

Pick a representative ideal $\mf{a}_0$ of the class in question. Let $L_{\mf m}^1$ denote the set of elements of $L$ congruent to $1$ modulo $\mf{m}$. Every ideal in our class can be written in the form $b\mf{a}_0$ for some $b\in L_{\mf m}^1\cap \mf{a}_0^{-1}$ (where $\mf{a}_0^{-1}$ is the appropriate fractional ideal). Furthermore, this representation is unique up to multiplying $b$ by an element of $\mathcal{O}_L^*\cap L_{\mf m}^1$. We have that $\xi(b\mf{a}_0) = \xi(b)\xi(\mf{a_0})$. Since $\xi$ has modulus $\mf m$ and since $b\in L_{\mf m}^1$, $\xi(b)=\psi(b)$ for $\psi$ some continuous character $\psi:(L\otimes\R)^* \rightarrow \C^*$, with $\psi(\mathcal{O}_L^*\cap L_{\mf m}^1)=1$. Additionally, we have that $N_{L/\Q}(b\mf{a}_0) = |N_{L/\Q}(b)|N_{L/\Q}(\mf{a}_0)$.

We simplify the sum in question by considering the geometry of the set of $b$'s in question. In particular, we note that $T:=L_{\mf m}^1\cap \mf{a}_0^{-1}$ is a translate of a lattice in $L\otimes \R$. Furthermore $N_{L/\Q}(b)$ is easily seen to be a degree $d$ polynomial with rational coefficients on this lattice. The sum that we wish to take is over all $b$ in this lattice with norm at most $X/N_{L/\Q}(\mf{a}_0)$ in some fundamental domain of the action of $\mathcal{O}_L^*\cap L_{\mf m}^1$. We can obtain such a region by letting $D$ be a fundamental domain of $\mathcal{O}_L^*\cap L_{\mf m}^1$ within the set of unit norm elements of $(L\otimes \R)^*$. We can then take our sum to be over all $b$ in $R:=D\cdot (0,(X/N_{L/\Q}(\mf{a}_0))^{1/d}]$. By Dirichlet's Unit Theorem, $D$ can be taken to be a bounded region with finite volume and surface area and finitely many connected components within the unit norm elements of $(L\otimes \R)^*$ (which by taking logarithms is isomorphic to a torus times some number of copies of $\R$, giving us notions of volume and surface area). For such $D$, it is easy to see that $R$ produces a region in $L\otimes \R$ with volume $\Theta(X)$ and surface area $O(X^{1-1/d})$ (where the implied constant depends on our choice of $D$) and finitely many connected components. Summarizing the above, we find that the expression that we need to bound is
$$
\xi(\mf{a}_0)\sum_{b\in T\cap R} \psi(b)e(\alpha |P(b)|),
$$
where $P$ is some polynomial of degree $d$ on $T$ with rational coefficients. It should be noted that $P(b)$ will have constant sign on connected components of $R$ (since $P$ extends to a non-zero, continuous function on $R$). Thus, by restricting our sum to a single connected component of $R$, we may ignore the absolute value of $P$ taken above.

In order to reduce the above sum to something that can be handled with Lemma \ref{polySumBoundLem}, we need to reduce to the case where we are summing $e(\alpha p(x))$ for $p$ some polynomial in one variable with integer leading term. In order to do this, we pick some non-zero vector $v$ under which $T$ is translation invariant. Then for any $b\in T$, $P(b+nv)$ is a degree-$d$ polynomial in $n$ whose rational, leading coefficient does not depend on $b$. Perhaps replacing $v$ by a positive multiple of itself, we may assume that this leading coefficient is a non-zero integer. Fix an integer $Y=\Theta(X^{1/(2d)})$. Define a \emph{line} in $T$ to be a subset of $T$ of the form $\{b+v,b+2v,\ldots,b+Yv\}$ for some $b\in T$. Each element of $T$ is contained in exactly $Y$ lines, and thus the sum in question can be written as
$$
\frac{\xi(\mf{a}_0)}{Y} \sum_{\textrm{lines }N}\sum_{b\in N\cap R} \psi(b)e(\alpha P(b)).
$$

We break the above sum into cases based upon whether or not $N$ is contained in $R$. If $N$ is neither contained in $R$ nor disjoint from $R$, then it must be defined by some $b\in T$ so that $b+xv\in \partial R$ for some $x\in [0,Y]$. This means that $b$ must lie within distance $O(Y)$ of the boundary of $R$. Extending a fundamental domain of $T$ around each such $b$, we find that their union is contained in a set of volume $O(X^{1-1/d}Y)$, and thus there are at most $O(X^{1-1/d}Y)$ such lines. Each such line contributes $O(1)$ to the above sum, thus the total contribution to the above sum coming from lines $N$ not contained in $R$ is $O(X^{1-1/d}Y)$, which is within the desired bounds.

Next consider the contribution coming from a particular line $N$ contained in $R$. We note that each of these points corresponds to a $b+v\in T\cap R$, and thus that there are at most $O(X)$ of these lines. Let $N=\{b+v,\ldots,b+Yv\}$. The sum in question over this line reduces to
$$
\sum_{n=1}^Y \psi(b+nv)e(\alpha P(b+nv)) = \psi(b+v) \sum_{n=1}^Y \psi(1+(n-1)v(b+v)^{-1}) e(\alpha p_b(n)),
$$
where $p_b(n)$ is a degree $d$ polynomial in $n$ with coefficients dependent on $b$, but whose leading term is integral and does not depend on $b$. Since $\psi$ is continuous (and thus smooth) we may write $\psi(1+nvb^{-1}) = 1+O(Y|(b+v)^{-1}|)$, where $|(b+v)^{-1}|$ is the maximum of the absolute value of $(b+v)^{-1}$ at any of the infinite places (where for complex places, we use the standard absolute value rather than its square). Thus the absolute value of the sum in question is
$$
O(\min(Y^2|(b+v)^{-1}|,Y))+\sum_{n=1}^Y e(\alpha p_b(n)).
$$
By Lemma \ref{polySumBoundLem}, the latter term above is
$$
O\left(Y\left(\frac{1}{q}+\frac{1}{Y}+\frac{q}{Y^d}\right)^{-10^{-d}}\right).
$$
Summing this latter term over all lines contained in $R$, gives a contribution to our final sum of size
$$
O\left(X\left(\frac{1}{q}+\frac{1}{Y}+\frac{q}{Y^d}\right)^{-10^{-d}}\right),
$$
which is of the appropriate size.

We have left to bound the sum over lines $N$ contained in $R$ of $O(\min(Y^2|(b+v)^{-1}|,Y))$. This is at most the sum over elements $a\in T\cap R$ of $O(\min(Y^2|a^{-1}|,Y))$. This in turn is
\begin{equation}\label{psiStuffEqn}
Y^2\int_{0}^{Y^{-1}} \#\{a\in T\cap R: |a^{-1}|>s \} ds.
\end{equation}
We note that an element $a$ has $|a^{-1}|>s$ if and only if $a$ has absolute value at most $s^{-1}$ at some infinite place. Furthermore, by construction, if $a$ is in $R$, it must have absolute value at most $O(X^{1/d})$ at each real place. Let $M^{\nu}_s$ be the set of $a\in R\cap T$ so that $|a|_\nu\leq s^{-1}$ for some particular infinite place $\nu$. Hence, $\#\{a\in T\cap R: |a^{-1}|>s \}\leq \sum |M^\nu_s|.$ Pick a fundamental domain for $T$. Let $\tilde M^{\nu}_s$ be the union of translates of this fundamental domain by elements of $M^\nu_s$. It is clear that $|M^\nu_s| = O(\textrm{Vol}(\tilde M^\nu_s))$. On the other hand, for $a\in \tilde M^\nu_s$, $|a|_\nu \leq s^{-1}+O(1)$ and $|a|_\mu =O(X^{1/d})$ for other infinite places $\mu$. Thus, $\textrm{Vol}(\tilde M^\nu_s)=O(X^{1-1/d}(1+s^{-1})).$ Hence for any $s$ we have that
$$
\#\{a\in T\cap R: |a^{-1}|>s \} = O(\min(X,X^{1-1/d}s^{-1})).
$$
Thus the quantity in Equation \eqref{psiStuffEqn} is at most
\begin{align*}
Y^2 \int_0^{Y^{-1}} O(\min(X,X^{1-1/d}s^{-1})) ds & = O(Y^2 X^{1-1/d} \log(X^{1/d}/Y))\\ & = O(Y^2 X^{1-1/d}\log(X)).
\end{align*}
Thus the total contribution from these terms to our final sum is
$$
O(YX^{1-1/d}\log(X)),
$$
which is within our desired bounds.
\end{proof}

Applying Abel summation and Lemma \ref{IdealSumLem} yields the following Corollary.

\begin{cor}\label{IdealLogSumCor}
Fix $L$ a number field of degree $d$, and $\xi$ a Grossencharacter of modulus $\mf m$.  Then given a positive number $X$ and a real number $\alpha$ which has a rational approximation of denominator $q$, we have that
$$
\left|\sum_{N(\mf a) \leq X} \log(N(\mf a))\xi(\mf a) e(\alpha N(\mf a)) \right| = O\left(X\log(X)\left(\frac{1}{q} +\frac{1}{X^{1/d}}+ \frac{q}{X} \right)^{10^{-d}/2} \right).
$$
\end{cor}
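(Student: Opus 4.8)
The plan is to obtain Corollary \ref{IdealLogSumCor} from Lemma \ref{IdealSumLem} by partial summation, with no new input required. Set
$$
S(t) = \sum_{N(\mf a)\leq t} \xi(\mf a)e(\alpha N(\mf a)), \qquad \epsilon = 10^{-d}/2,
$$
so that Lemma \ref{IdealSumLem}, applied with $t$ in place of $X$, gives
$$
|S(t)| \ll t\left(\frac{1}{q} + \frac{1}{t^{1/d}} + \frac{q}{t}\right)^{\epsilon}
$$
uniformly for $t\geq 1$, the implied constant depending only on $L$ and $\xi$. Since the unique ideal of norm $1$ contributes $\log 1 = 0$ and there are no ideals of smaller norm, Abel summation yields
$$
\sum_{N(\mf a)\leq X} \log(N(\mf a))\xi(\mf a)e(\alpha N(\mf a)) = S(X)\log X - \int_1^X \frac{S(t)}{t}\,dt.
$$
The first term is immediately $O\big(X\log X\,(1/q + X^{-1/d} + q/X)^{\epsilon}\big)$ by Lemma \ref{IdealSumLem}, which is exactly the bound claimed.

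For the integral I would bound $|S(t)|/t$ by the displayed estimate and apply the elementary inequality $(a+b+c)^{\epsilon} \leq a^{\epsilon} + b^{\epsilon} + c^{\epsilon}$, valid for $a,b,c\geq 0$ and $0<\epsilon\leq 1$, to get
$$
\int_1^X \frac{|S(t)|}{t}\,dt \ll q^{-\epsilon}\int_1^X dt + \int_1^X t^{-\epsilon/d}\,dt + q^{\epsilon}\int_1^X t^{-\epsilon}\,dt.
$$
The first integral is $\ll X q^{-\epsilon}$; the second is $\ll X^{1-\epsilon/d}$ since $\epsilon/d<1$; and the third is $\ll q^{\epsilon}X^{1-\epsilon}$ since $\epsilon<1$. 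Each of these three quantities is at most $X(1/q + X^{-1/d} + q/X)^{\epsilon}$, so $\int_1^X S(t)t^{-1}\,dt$ is absorbed into the claimed bound (even without the extra factor of $\log X$). Adding the boundary term completes the argument, and since the only new constants produced are $d/(d-\epsilon)$ and $1/(1-\epsilon)$, both depending solely on $d=[L:\Q]$, the final implied constant still depends only on $L$ and $\xi$.

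I expect no genuine obstacle here. The one point worth a second's thought is that the estimate of Lemma \ref{IdealSumLem} is nontrivial only when $t$ is reasonably large — for $t\leq q$ the quantity in parentheses exceeds $1$ and the bound is merely the trivial $O(t)$ — but this is harmless: after splitting off the $\epsilon$-th power, the pieces $\int_1^X t^{-\epsilon/d}\,dt$ and $q^{\epsilon}\int_1^X t^{-\epsilon}\,dt$ are of the right order of magnitude whether or not $X$ exceeds $q$, so no case distinction on the size of $q$ relative to $X$ is needed.
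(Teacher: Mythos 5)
Your argument is correct and is exactly what the paper has in mind: the paper derives Corollary \ref{IdealLogSumCor} from Lemma \ref{IdealSumLem} by Abel summation without spelling out the details, which is precisely what you have done. The splitting via subadditivity of $x\mapsto x^\epsilon$ and the three elementary integral estimates all go through, and the bound you obtain is even slightly stronger than needed since the integral term loses the factor of $\log X$.
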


\subsubsection{Bounds on $F$}\label{FRoughBoundSec}

We are finally ready to prove our bound on $F$.

\begin{prop}\label{FBoundRoughProp}
Fix a number field $L$ of degree $d$ and a Grossencharacter $\xi$.  Let $X\geq 0$ be a real number.  Let $\alpha$ be a real number with a rational approximation of denominator $q$ where $XB^{-1}>q>B$ for some $B>0.$  Then $F_{L,\xi,X}(\alpha)$ is
$$
O\left( X\left(\log^2(X)B^{-10^{-d}/12}+\log^2(X)X^{-10^{-d}/60}+\log^2(X)X^{-10^{-d}/10d}+\log^{2+d^2/2}(X)B^{-1/12} \right) \right).
$$
Where the asymptotic constant may depend on $L$ and $\xi$, but not on $X,q,B$ or $\alpha$.
\end{prop}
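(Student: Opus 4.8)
The plan is to mimic the classical bound on Vinogradov's exponential sum over primes (cf.\ \cite{IK} Theorem~13.9), carried out over the ideals of $L$ rather than over $\Z$. I would begin by applying Vaughan's identity (\cite{IK}, (13.39)) to the logarithmic derivative $-L'(\xi,s)/L(\xi,s)=\sum_{\mf a}\Lambda_L(\mf a)\xi(\mf a)N(\mf a)^{-s}$, using the truncated Dirichlet polynomials $\sum_{N(\mf a)\le U}\Lambda_L(\mf a)\xi(\mf a)N(\mf a)^{-s}$ and $\sum_{N(\mf a)\le V}\mu_L(\mf a)\xi(\mf a)N(\mf a)^{-s}$ in place of the usual ones. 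Reading off coefficients and splitting each ideal variable into dyadic ranges, this expresses $F_{L,\xi,X}(\alpha)$ as $O_d(\log^{O(1)}X)$ sums of two shapes: a trivially bounded piece of size $O(U)$; ``Type~I'' sums $\sum_{N(\mf d)\le D_0}\lambda_{\mf d}\sum_{N(\mf e)\le X/N(\mf d)}\xi(\mf d\mf e)\,e(\alpha N(\mf d)N(\mf e))$ with $\lambda_{\mf d}\ll_\epsilon N(\mf d)^\epsilon$; and ``Type~II'' bilinear sums $\sum_{N(\mf d)\sim M}\sum_{N(\mf e)\sim N}\beta_{\mf d}\gamma_{\mf e}\,\xi(\mf d\mf e)\,e(\alpha N(\mf d)N(\mf e))$ with $MN\le X$, both $M$ and $N$ at least the Vaughan cutoff, and $\beta_{\mf d},\gamma_{\mf e}\ll_\epsilon N(\cdot)^\epsilon$. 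The cutoffs $U,V$ (hence $D_0,M,N$) will be chosen at the very end.

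For a Type~I sum I would fix $\mf d$ and regard the inner sum over $\mf e$ as one to which Corollary~\ref{IdealLogSumCor} applies with frequency $\alpha N(\mf d)$ in place of $\alpha$. The issue is that we only know a rational approximation of $\alpha$ itself, not of $\alpha N(\mf d)$; this is exactly what Lemma~\ref{RatApproxStructureLem} is for. Applied with its parameter $C$ equal to $D_0$ and with our hypothesis $XB^{-1}>q>B$ as its hypothesis, it produces a set $S$ of large integers with small reciprocal sum such that, unless $N(\mf d)$ is a multiple of some element of $S$, the number $\alpha N(\mf d)$ has a rational approximation whose denominator lies in a range where Corollary~\ref{IdealLogSumCor} gives a true power saving. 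The $\mf d$ with $N(\mf d)$ a multiple of some $s\in S$ I would treat by the trivial bound $\ll X/N(\mf d)$ on the inner sum, so that their total contribution is $\ll X\sum_{s\in S}\sum_{s\mid N(\mf d),\,N(\mf d)\le D_0}N(\mf d)^{-1+\epsilon}$, which is controlled by the first estimate of Lemma~\ref{NormMultLem} and the reciprocal bound on $S$.

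For a Type~II sum I would Cauchy--Schwarz in the variable $\mf d$ to kill $\beta_{\mf d}$, expand the square, and group the remaining $\mf e,\mf e'$ terms by the value of $k=N(\mf e)-N(\mf e')$. The $k=0$ (diagonal) part is $\ll MN\log^{O(1)}X\ll X\log^{O(1)}X$ via a standard divisor estimate for $\sum_{N(\mf e)=N(\mf e')}|\gamma_{\mf e}\gamma_{\mf e'}|$. For $k\ne0$ the inner sum over $\mf d$ has frequency $\alpha k$, and the weight $\sum_{N(\mf e)-N(\mf e')=k}|\gamma_{\mf e}\gamma_{\mf e'}|$ is $\ll N\log^{O(1)}X$ uniformly in $k$; I would once more invoke Lemma~\ref{RatApproxStructureLem} (now with $C\asymp N$) so that, except when $k$ is a multiple of some $s\in S$, $\alpha k$ has a rational approximation of useful denominator and Lemma~\ref{IdealSumLem} bounds the $\mf d$-sum, the exceptional $k$ being absorbed by the reciprocal bound on $S$; the product form of Lemma~\ref{NormMultLem} is what one needs in the parallel treatment where $\gamma$ is removed instead and the surviving frequency is $\alpha$ times a product of norms. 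Summing over the dyadic blocks, taking square roots, and combining with the Type~I bound reduces everything to a single optimization of the Vaughan cutoff against $B$, balancing the $B^{-\Theta(1)}$ loss from the exceptional sets, the $X^{-\Theta(1)}$ losses coming from the error terms of Lemma~\ref{IdealSumLem}, and the accumulated $\log^{O(1)}X$ factors; this is what produces the several exponents appearing in the statement.

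I expect the Type~II step to be the main obstacle. In the classical setting the post-Cauchy--Schwarz inner sum is a geometric series, bounded cleanly by $\min(M,\|\alpha k\|^{-1})$ and summed by an elementary lemma; here it is instead an exponential sum over ideals, Lemma~\ref{IdealSumLem} only saves a fixed power $10^{-d}$, and — crucially — that bound degrades badly when the relevant denominator is merely polylogarithmic. Thus one cannot just sum $\min$-type bounds: one must show that for all but a sparse set of norm-differences $k$ the multiple $\alpha k$ genuinely acquires a rational approximation of moderate denominator (the role of Lemmas~\ref{RatAproxMultLem} and~\ref{RatApproxStructureLem}), control that sparse set on average over ideals (the role of Lemma~\ref{NormMultLem}), and then keep careful track of the resulting error terms so that clean positive powers of both $B$ and $X$ survive in the end.
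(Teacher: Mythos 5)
Your plan is correct and is essentially the paper's own proof: the identity (13.39) of \cite{IK} over ideals with cutoffs near $X^{2/5}$; Type~I terms handled by Corollary~\ref{IdealLogSumCor} with Lemmas~\ref{RatApproxStructureLem} and~\ref{NormMultLem} controlling the exceptional ideals; the Type~II (bilinear) term handled by dyadic Cauchy--Schwarz in $\mf b$, with the rational-approximation lemmas forcing a usable denominator for $\alpha(N(\mf d)-N(\mf d'))$, Lemma~\ref{IdealSumLem} for the resulting inner ideal sums, and a rearrangement/divisor bound ($W(n)^2\le\tau_{d^2}(n)$) for the weight attached to each norm-difference. The only cosmetic differences are that the paper fixes $y=z=X^{2/5}$ from the outset rather than optimizing cutoffs at the end, and at the Type~II stage invokes the counting form Lemma~\ref{RatAproxMultLem} directly rather than the structural Lemma~\ref{RatApproxStructureLem}; both routes give the same exponents.
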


Note that a bound for $F_{L,\xi,X}(\alpha)$ is already known for the case when $L/\Q$ is abelian.  In \cite{ExpSumsOverPrimesInArithmeticProgressions}, bounds are established for exponential sums over primes in an arithmetic progression.  By Theorem \ref{CFTTheorem}, this is equivalent to proving bounds on $F$ (or more precisely, $G$) when $L$ is abelian over $\Q$.  Proposition \ref{FBoundRoughProp} can be thought of as a generalization of this result.

\begin{proof}
Our proof is along the same lines as Theorem 13.6 of \cite{IK}.  We first note that the suitable generalization of Equation (13.39) of \cite{IK} still applies.  Letting $y=z=X^{2/5}$  ($y$ and $z$ are variables used in (13.39) of \cite{IK}), we find that $F_{L,\xi,X}(\alpha)$ equals
\begin{align*}
&\sum_{\substack{N(\mf{ab})\leq X\\ N(\mf a)<X^{2/5}}} \mu(\mf a)\xi(\mf a)\log(N(\mf b))\xi(\mf b)e(\alpha N(\mf a)N(\mf b)) \\
&- \sum_{\substack{N(\mf{abc})\leq X\\ N(\mf b),N(\mf c)\leq X^{2/5}}}\mu(\mf b)\Lambda_L(\mf c)\xi(\mf{bc})\xi(\mf a)e(\alpha N(\mf{bc})N(\mf a))\\
&+ \sum_{\substack{N(\mf{abc})\leq X\\ N(\mf b),N(\mf c) \geq X^{2/5}}}\mu(\mf b)\xi(\mf b) \Lambda_L(\mf c)\xi(\mf{ac})e(\alpha N(\mf b)N(\mf{ac})) + O(X^{2/5}).
\end{align*}
We bound the first term by applying Corollary \ref{IdealLogSumCor} to the sum over $\mf b$.  Let $A=B^{1/4}\leq X^{1/8}$. By Lemmas \ref{RatApproxStructureLem} and \ref{NormMultLem}, we can bound the sum over terms where $\alpha N(\mf a)$ has no rational approximation with denominator between $A$ and $\frac{X}{AN(\mf a)}$ by
\begin{align*}
O\left(X\left(\log^2(X)  \left( A^3B^{-1} + X^{-3/5}A^4\right)\left( \frac{B}{A}\right)^{\epsilon} \right)\right)
 = O\left(X\log^2(X)B^{-1/4+\epsilon} \right).
\end{align*}
For other values of $\mf b$, Corollary \ref{IdealLogSumCor} bounds the sum as
$$
O\left(X\log^2(X)\left(B^{-1/4} + X^{-3/5d} \right)^{10^{-d}/2} \right).
$$

The second term is bounded using similar considerations.  We let $A=\min(B^{1/4},X^{1/41})$, and use Lemmas \ref{RatApproxStructureLem} and \ref{NormMultLem} to bound the sum over terms with $\mf b$ and $\mf c$ such that $N(\mf{bc})\alpha$ has no rational approximation with norm between $A$ and $\frac{X}{AN(\mf{bc})}$ by
\begin{align*}
O\left(X\log^3(X)\left(\frac{B}{A}\right)^\epsilon \left( B^{-1/4}+ X^{-1} A^4 X^{4/5}\right) \right)\\ = O\left(X\log^3(X)\left(B^{-1/4+\epsilon}+ X^{-1/10} \right) \right).
\end{align*}
Using Lemma \ref{IdealSumLem}, we bound the sum over other values of $\mf b$ and $\mf c$ as
$$
O\left(X\log^2(X)\left(A^{-1}+X^{-1/5d} \right)^{10^{-d}/2} \right).
$$

To bound the last sum, we first change to a sum over $\mf b$ and $\mf d = \mf a \cdot \mf c$.  We have coefficients
$$
x(\mf b) = \mu(\mf b)\xi(\mf b),
$$
and
$$
y(\mf d) = \sum_{\substack{\mf a \cdot\mf c = \mf d\\ N(\mf c) \geq X^{2/5}}} \Lambda_L(\mf c)\xi(\mf{ac}).
$$
We note that $|y(\mf d)| \leq \log(N(\mf d)) \leq \log(X)$.  Our third term then becomes
$$
\sum_{\substack{N(\mf{bd})\leq X\\ N(\mf b),N(\mf d) \geq X^{2/5}}} x(\mf b)y(\mf d) e(\alpha N(\mf b) N(\mf d)).
$$
We apply the bilinear form method.  First, we split the sum over $\mf b$ into parts based on which dyadic interval (of the form $[K,2K]$), the norm of $\mf b$ lies in.  Next, for each of these summands, we apply Cauchy-Schwartz to bound it by
\begin{align*}
& \left(\Bigg(\sum_{N(\mf b)\in [K,2K]} |x(\mf b)|^2 \Bigg)\cdot\Bigg(\sum_{N(\mf b)\in [K,2K]} \Bigg(\sum_{\substack{N(\mf d) \leq X/N(\mf b)\\N(\mf d)\geq X^{2/5}}}y(\mf d) e(\alpha N(\mf{bd})) \Bigg)^2\Bigg) \right)^{1/2}\\
\ll& K^{1/2}\Bigg(\sum_{\substack{N(\mf b)\in [K,2K]\\N(\mf d),N(\mf d')\leq X/N(\mf b)\\N(\mf d),N(\mf d')\geq X^{2/5}}} y(\mf d) \overline{y(\mf d')} e(\alpha N(\mf b)(N(\mf d) - N(\mf d'))) \Bigg)^{1/2}\\
\ll & K^{1/2}\log(X)\Bigg(\sum_{\substack{X^{2/5} \leq N(\mf d),N(\mf d')\\ N(\mf d),N(\mf d')\leq X/(2K)}}\Bigg|\sum_{\substack{N(\mf b)\in [K,2K]\\N(\mf b)\leq X/N(\mf d)\\N(\mf b)\leq X/N(\mf d')}} e(\alpha (N(\mf d)-N(\mf d'))N(\mf b))\Bigg| \Bigg)^{1/2}.
\end{align*}
We let $A = \min(B^{1/6},X^{1/16})$.  We bound terms separately based on whether or not $\alpha(N(\mf d) - N(\mf d'))$ has a rational approximation with denominator between $A$ and $KA^{-1}$.  Applying Lemma \ref{RatAproxMultLem} with $X=K$, $Y=XK^{-1}$, $A=A$ and $B=B$, we get that the number of values of $N(\mf d) - N(\mf d')$ that cause this to happen is
$$
O\left(XK^{-1}\left(B^{-1/6}+X^{3/32}B^{-1/2}+\log(X)A^4K^{-1} \right) \right)=O(XK^{-1}B^{-1/6}).
$$
For each such difference, $m$, the number of pairs $\mf d,\mf d'$ with norms at most $X/2K$ and $N(\mf d)-N(\mf d')=m$ is
$$
\sum_{n=1}^{X/2K-m} \left(\textrm{Number of ideals with norm } n \right)\cdot \left(\textrm{Number of ideals with norm } n + m \right),
$$
which by Cauchy-Schwartz is at most
$$
\sum_{n=1}^{X/2K} \left(\textrm{Number of ideals with norm } n \right)^2.
$$
Letting $W(n)$ be the number of ideals of $L$ with norm $n$, we have by Lemma 1.1 of \cite{IdealNorm} that $W(n) \leq \tau_d(n)$, where $\tau_{d}(n)$ is the number of ways of writing $n$ as a product of $d$ integers. We therefore have that $W^2(n)\leq \tau_d(n)^2 \leq \tau_{d^2}(n)$ and hence the above sum is $O(XK^{-1}\log^{d^2}(X))$.  Hence the total contribution from terms with such $\mf d$ and $\mf d'$ is at most
\begin{align*}
O\left(\left(K^{1/2}\log(X)\right)\left(K\left(XK^{-1}B^{-1/6}\right)\left(XK^{-1}\log^{d^2}(X)\right) \right)^{1/2}\right)\\
 = O\left(X\log^{1+d^2/2}(X)B^{-1/12} \right).
\end{align*}
The sum over the $O(\log(X))$ possible values for $K$ of the above is
$$
O\left(X\log^{2+d^2/2}(X)B^{-1/12} \right).
$$
On the other hand, the sum over $\mf d$ and $\mf d'$ so that $\alpha(N(\mf d)-N(\mf d'))$ has a rational approximation with appropriate denominator is bounded by Lemma \ref{IdealSumLem} by
\begin{align*}
O\left(\left(K^{1/2}\log(X)\right)\left(\left(XK^{-1} \right)^2K\left(A^{-1}+K^{-1/d} \right)^{10^{-d}/2} \right)^{1/2} \right)\\
= O\left(X\log(X)\left(A^{-1}+K^{-1/d} \right)^{10^{-d}/4} \right).
\end{align*}
Summing over all of the intervals we get
$$
O\left(X\log^2(X)\left(A^{-1}+X^{-2/5d}\right)^{10^{-d}/4} \right).
$$
Putting this all together, we get the desired bound for $F$.
\end{proof}

\subsection{Putting it Together}

We are finally prepared to prove Theorem \ref{FApproxThm}.
\begin{proof}
We note that $\alpha$ can always be approximated by $\frac{a}{q}$ for some relatively prime integers $a,q$ with $q\leq X\log^{-B}(X)$ so that
$$
\left| \alpha - \frac{a}{q}\right| \leq \frac{1}{qX\log^{-B}(X)}.
$$
We split into cases based upon weather $q\leq z = \log^B(X)$.

If $q\leq z$ our result follows from Corollary \ref{FsmoothCor}.

If $q\geq z$, our result follows from Propositions \ref{FsharpRoughProp} and \ref{FBoundRoughProp}.
\end{proof}

\section{Approximation of $G$}\label{GSec}

In this Section, we prove Theorem \ref{GApproxThm}.

\begin{proof}
Recall Proposition \ref{GFRelationProp} which states that
$$
G_{K,C,X}(\alpha) = \frac{|C|}{|G|}\left(\sum_\chi \overline{\chi}(c)F_{L,\chi,X}(\alpha) \right) + O(\sqrt{X}).
$$
Where $c$ is some element of $C$, and $L$ is the fixed field of $\langle c \rangle \subset \Gal(K/\Q)$.
Applying Theorem \ref{FApproxThm}, this is within $O\left(X\log^{-A}(X)\right)$ of
\begin{align*}
\frac{|C|}{|G|}\left(\sum_\chi \overline{\chi}(c)F_{L,\chi,X,z}^\sharp(\alpha) \right)
&= \frac{|C|}{|G|}\left(\sum_\chi \sum_{n\leq X} \overline{\chi}(c)\Lambda_{L/\Q}(n)\Lambda_z(n)\chi(n)e(\alpha n) \right)\\
&= \frac{|C|}{|G|}\left(\sum_{n\leq X} \Lambda_z(n)e(\alpha n) \left(\Lambda_{L/\Q}(n)\sum_\chi\overline{\chi}(c)\chi(n)  \right)\right).
\end{align*}
Note that in the above, $\chi$ is summed over characters of $\Gal(K/L)$ and that $\chi(n)$ is taken to be 0 unless $\chi$ can be extended to a character of $\Gal(K/\Q)^{ab}$.  We wish the evaluate the inner sum over $\chi$ for some $n\in H_L$.

Let the kernel of the map $\langle c\rangle \rightarrow \Gal(K/\Q)^{ab}$ be generated by $c^k$ for some $k|\ord(c)$.  Then $\chi(n)$ is 0 unless $\chi(c^k)=1$.  Therefore we can consider the sum as being over characters $\chi$ of $\langle c\rangle/c^k$.  Taking $K^a$ to be the maximal abelian subextension of $K$ over $\Q$, this sum is then $k$ if $[K^a/\Q,n]=c$ and 0 otherwise.  Hence the sum over $\chi$ is non-zero if and only if $n\in H_C$.  The index of $H_C$ in $H_L$ is $[H_L:H_K]$, which is in turn the size of the image of $\langle c\rangle$ in $\Gal(K/\Q)^{ab}$, or $|\langle c\rangle/\langle c^k \rangle| = k$.  Hence $\Lambda_L(n)\sum_\chi\overline{\chi}(c)\chi(n) = \Lambda_{K,C}(n)$.  Therefore $G_{K,C,X}(\alpha)$ is within $O\left( X\log^{-A}(X)\right)$ of
$$
\frac{|C|}{|G|}\sum_{n\leq X}\Lambda_{K,C}(n)\Lambda_z(n)e(\alpha n) = G_{K,C,X,z}^\sharp(\alpha).
$$
\end{proof}

\section{Proof of Theorem \ref{mainThm}}\label{proofSec}

We now have all the tools necessary to prove Theorem \ref{mainThm}.  Our basic strategy will be as follows.  We first define a generating function $H$ for the number of ways to write $n$ as $\sum_i a_ip_i$ for $p_i$ primes satisfying the appropriate conditions.  It is easy to write $H$ in terms of the function $G$.  First, we will show that if $H$ is replaced by $H^\sharp$ by replacing these $G$'s by $G^\sharp$'s, this will introduce only a small change (in an appropriate norm).  Dealing with $H^\sharp$ will prove noticeably simpler than dealing with $H$ directly.  We will essentially be able to approximate the coefficients of $H^\sharp$ using sieving techniques.  Finally we combine these results to prove the Theorem.

\subsection{Generating Functions}\label{ErrorTermSec}

We begin with some basic definitions.
\begin{defn}
Let $K_i,C_i,a_i,X$ be as in the statement of Theorem \ref{mainThm}.  Then we define
$$
S_{K_i,C_i,a_i,X}(N) := \sum_{\substack{p_i \leq X \\ [K_i/\Q, p_i] = C_i \\ \sum_i a_i p_i = N}} \prod_{i=1}^k \log(p_i).
$$
(i.e. the left hand side of Equation \eqref{asymptEqu}).  We define the generating function
$$
H_{K_i,C_i,a_i,X}(\alpha) := \sum_N S_{K_i,C_i,a_i,X}(N)e(N\alpha ).
$$
Notice that this is everywhere convergent since there are only finitely many non-zero terms.
\end{defn}
We know from basic facts about generating functions that
\begin{equation}\label{genFunctProdEqn}
H_{K_i,C_i,a_i,X}(\alpha) = \prod_{i=1}^k G_{K_i,C_i,X}(a_i\alpha).
\end{equation}
We would like to approximate the $G$'s by corresponding $G^\sharp$'s.  Hence we define
\begin{defn}
$$
H_{K_i,C_i,a_i,z,X}^\sharp(\alpha) := \prod_{i=1}^k G_{K_i,C_i,z,X}^\sharp(a_i\alpha).
$$
$$
H_{K_i,C_i,a_i,z,X}^\flat(\alpha) := H_{K_i,C_i,a_i,X}(\alpha) -H_{K_i,C_i,a_i,z,X}^\sharp(\alpha).
$$
\end{defn}
We now prove that this is a reasonable approximation.

\begin{lem}\label{HapproxLem}
Let $A$ be a constant, and $z=\log^B(X)$ for $B$ a sufficiently large multiple of $A$.
If $k\geq 3$,
$$
\left| H_{K_i,C_i,a_i,z,X}^\flat \right|_1 = O(X^{k-1}\log^{-A}(X)).
$$
If $k=2$,
$$
\left| H_{K_i,C_i,a_i,z,X}^\flat \right|_2 = O(X^{3/2}\log^{-A}(X)).
$$
In the above we are taking the $L^1$ or $L^2$ norm respectively of $H_{K_1,C_i,a_i,X}^\flat$ as a function on $[0,1]$, and the asymptotic constants in the big-$O$ terms are allowed to depend on $K_i$, $a_i$, $A$ and $B$, but not on $X$ or $N$.
\end{lem}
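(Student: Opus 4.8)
The plan is to expand $H^\flat_{K_i,C_i,a_i,z,X}$ as a telescoping sum whose terms each contain exactly one factor $G^\flat(a_j\alpha)$, to control that factor in $L^\infty$ via Theorem \ref{GApproxThm}, and to handle the surviving factors with a priori $L^\infty$ and $L^2$ bounds combined with H\"older's inequality.

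First I would record the a priori estimates, suppressing subscripts as the paper allows. The triangle inequality gives $\|G_{K,C,X}\|_\infty \le \sum_{p\le X}\log p = O(X)$, and since $\Lambda_{K,C} = O(1)$, $\Lambda_z \le C(z) = O(\log\log X)$, while a routine computation with Lemma \ref{SmoothNumberLem} shows $\sum_{n\le X}\Lambda_z(n) = O(X)$, we also get $\|G^\sharp\|_\infty = O(X)$. By Parseval on $[0,1]$, $\|G_{K,C,X}\|_2^2 = \sum_{p\le X,\,[K/\Q,p]=C}\log^2 p = O(X\log X)$, and $\|G^\sharp\|_2^2 = (|C|/|G|)^2\sum_{n\le X}(\Lambda_{K,C}(n)\Lambda_z(n))^2 \ll C(z)^2 X = O(X\log^2 X)$; so in every case $\|G_{K,C,X}\|_2,\|G^\sharp\|_2 = O(X^{1/2}\log X)$. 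Because $G_{K,C,X}$ and $G^\sharp$ are finite exponential sums in $e(n\alpha)$ with $n\in\Z$, they are $1$-periodic, and for integers $a_i$ the substitution $\alpha\mapsto a_i\alpha$ preserves each $L^p[0,1]$ norm; hence all of the above bounds transfer to the dilated functions $G(a_i\alpha)$, $G^\sharp(a_i\alpha)$, $G^\flat(a_i\alpha)$.

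Now set $b_i(\alpha) = G_{K_i,C_i,X}(a_i\alpha)$ and $c_i(\alpha) = G^\sharp_{K_i,C_i,z,X}(a_i\alpha)$, so that $H = \prod_i b_i$, $H^\sharp = \prod_i c_i$, and $b_j - c_j = G^\flat(a_j\alpha)$. The standard identity for a difference of products gives
\[
H^\flat_{K_i,C_i,a_i,z,X} = \sum_{j=1}^k\Bigl(\prod_{i<j}b_i\Bigr)\,G^\flat_{K_j,C_j,z,X}(a_j\alpha)\,\Bigl(\prod_{i>j}c_i\Bigr).
\]
Applying Theorem \ref{GApproxThm} with the exponent $A'$ in place of $A$ (valid once $B$ is a sufficiently large multiple of $A$) yields $\|G^\flat_{K_j,C_j,z,X}(a_j\cdot)\|_\infty = O(X\log^{-A'}X)$. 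For $k\ge 3$, the $j$th summand has $k-1\ge 2$ surviving factors: bound $k-3$ of them in $L^\infty$, each $O(X)$, and the remaining two by Cauchy--Schwarz, each with $L^2$ norm $O(X^{1/2}\log X)$, so the product of those $k-1$ factors has $L^1$ norm $O(X^{k-2}\log^2 X)$; multiplying by the $L^\infty$ bound on the $G^\flat$ factor bounds the summand's $L^1$ norm by $O(X^{k-1}\log^{2-A'}X)$. Taking $A' = A+2$ and summing over $j$ gives the first claim. For $k=2$, the $j$th summand is $G^\flat(a_j\cdot)$ times a single surviving factor, whose $L^2$ norm is $O(X^{1/2}\log X)$; hence the summand's $L^2$ norm is $O(X^{3/2}\log^{1-A'}X)$, and taking $A' = A+1$ and summing gives the second claim.

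The only point of genuine substance is the split between $k\ge 3$ and $k=2$. When $k\ge 3$ there are at least two surviving factors after removing the $G^\flat$ factor, so a single use of Cauchy--Schwarz on two of them together with trivial $O(X)$ bounds on the rest suffices to gain a full factor of $X$; when $k=2$ only one factor survives, and its $L^1$ norm cannot be controlled by anything smaller than $O(X^{1/2}\log X)$, which would yield an $L^1$ bound of order only $X^{3/2}$ --- too weak. This is precisely why the $k=2$ case must be stated with the $L^2$ norm, and is what ultimately forces the ``all but $O(X\log^{-A}X)$ values of $N$'' formulation in Theorem \ref{mainThm}.
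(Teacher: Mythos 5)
Your proof is correct and follows essentially the same strategy as the paper: decompose $H^\flat$ into terms each containing a $G^\flat$ factor, control that factor in $L^\infty$ via Theorem \ref{GApproxThm}, and use Cauchy--Schwarz plus trivial $L^\infty$ bounds on the remaining factors, with the $k\ge 3$ versus $k=2$ split arising exactly because you need two surviving factors to spare for the $L^2$ step. The only cosmetic difference is that you use the telescoping identity (yielding exactly one $G^\flat$ per term, with a mix of $G$ and $G^\sharp$ surviving), while the paper fully expands $\prod(G^\sharp+G^\flat)$ (yielding terms with one or more $G^\flat$'s and only $G^\sharp$'s surviving, so it also records an $L^2$ bound for $G^\flat$); both routes need the same estimates and the same Hölder bookkeeping.
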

\begin{proof}
Our basic technique is to write each of the $G$'s in Equation \ref{genFunctProdEqn} as $G^\sharp+G^\flat$ and to expand out the resulting product.  We are left with a copy of $H^\sharp$ and a number of terms which are each a product of $k$ $G^\sharp$ or $G^\flat$'s, where each such term has at least one $G^\flat.$  We need several facts about various norms of the $G^\sharp$ and $G^\flat$'s.  We recall that the squared $L^2$ norm of a generating function is the sum of the squares of it's coefficients.

\begin{itemize}
\item By Theorem \ref{GApproxThm}, the $L^{\infty}$-norm of $G^\flat$ is $O\left( X \log^{-2A-k}(X)\right)$.
\item The $L^{\infty}$ norm of $G^\sharp$ is clearly $O\left( X\log\log(X)\right)$.
\item $|G^\sharp|_2^2 = O(X \log\log^2(X))$.
\item $|G|_2^2 = O(X\log(X))$.
\item Combining the last two statements, we find that $|G^\flat|_2^2 = O(X\log(X))$.
\end{itemize}

For $k\geq 3$, we note that by Cauchy-Schwartz, the $L^1$ norm of a product of $k$ functions is at most the products of the $L^2$ norms of two of them times the products of the $L^\infty$ norms of the rest.  Using this and ensuring that at least one of the terms we take the $L^\infty$ norm of is a $G^\flat$, we obtain our bound on $|H^\flat|_1$.

For $k=2$, we note that the $L^2$ norm of a product of two functions is at most the $L^2$ norm of one times the $L^\infty$ norm of the other.  Applying this to our product, ensuring that we take the $L^\infty$ norm of a $G^\flat$ we get the desired bound on $|H^\flat|_2$.
\end{proof}

\subsection{Dealing with $H^\sharp$}\label{mainTermSec}

Now that we have shown that $H^\sharp$ approximates $H$, it will be enough to compute the coefficients of $H^\sharp$.

\begin{prop}\label{HSharpProp}
Let$z=\log^B(X)$ for $B$ some positive constant. Pick $\epsilon>0$ some other constant
The $e(N\alpha)$ coefficient of $H_{K_i,C_i,a_i,z,X}^\sharp(\alpha)$ is given by the right hand side of Equation \eqref{asymptEqu}, or
$$
\left(\prod_{i=1}^k \frac{|C_i|}{|G_i|} \right)C_\infty C_{D} \left(\prod_{p \nmid D} C_p\right) + O\left(X^{k-1+\epsilon - 1/(3B)} \right),
$$
where the implied constant above depends potentially on $k$, $K_i$, $a_i$, $B$ and $\epsilon$, but not on $X$ or $N$.
\end{prop}
\begin{proof}
We note that the quantity of interest is equal to
\begin{equation}\label{HSharpCoefEqn}
\left( \prod_{i=1}^k \frac{|C_i|}{|G_i|}\right)\sum_{\substack{n_1,\ldots,n_k\leq X\\ \sum_{i=1}^k a_i n_i = N}} \left(\prod_{i=1}^k \Lambda_{K_i,C_i}(n_i) \right)\left(\prod_{i=1}^k \Lambda_z(n_i) \right).
\end{equation}
This is
\begin{align*}
& \left( \prod_{i=1}^k \frac{|C_i|}{|G_i|}\right)C(z)^k \left(\prod_{i=1}^k\frac{\phi(D)}{|H_i|}  \right) \\ & \cdot \left|\left\{(n_1,\ldots,n_k)\in \{1,2,\ldots,X\}^k:n_i \pmod D \in H_i, (n_i,P(z))=1, \sum_{i=1}^k a_i n_i = N \right\} \right|.
\end{align*}
Thus our problem reduces to computing the size of the set $S$ given by:
$$
\left\{(n_1,\ldots,n_k)\in \{1,2,\ldots,X\}^k:n_i \pmod D \in H_i, (n_i,P(z))=1, \sum_{i=1}^k a_i n_i = N \right\}.
$$

Our main technique for dealing with this term will be based of sieving. In particular, we sieve based on which primes less than $z$ divide any of the $n_i$. For $d|P(z,D)$ we define
$$
S_d = \left\{(n_1,\ldots,n_k)\in \{1,2,\ldots,X\}^k:n_i \pmod D \in H_i, d\bigg|\prod_{i=1}^k n_i, \sum_{i=1}^k a_i n_i = N \right\}.
$$
It follows easily that
$$
|S| = \sum_{d|P(z,D)} \mu(d)|S_d|.
$$
Thus, it suffices to estimate the sizes of the $S_d$.

We note that $S_d$ is the set of tuples $(n_1,\ldots,n_k)$ with $n_i\leq X$, and $\sum_{i=1}^k a_i n_i = N$ so that the vector $(n_1,\ldots,n_k)\pmod{dD}$ lies in some restricted set of congruence classes. In particular, let
$$
T_D:= \left\{(n_1,\ldots,n_k)\in (\Z/D\Z)^k: \sum_{i=1}^k a_i n_i \equiv N \pmod{D}, n_i\in H_i\right\},
$$
and
$$
T_p:= \left\{(n_1,\ldots,n_k)\in (\Z/D\Z)^k: \sum_{i=1}^k a_i n_i \equiv N \pmod{p}, p\bigg| \prod_{i=1}^k n_i\right\}.
$$
Then the elements of $S_d$ are the tuples with $n_i\leq X$, $\sum_{i=1}^k a_i n_i = N$, and $(n_1,\ldots,n_k)\in T_D$ and $(n_1,\ldots,n_k)\in T_p$ for all $p|d$. To count the number of such points, we first condition on their congruence classes modulo $dD$. In particular, by the Chinese Remainder Theorem, a $(n_1,\ldots,n_k)\in S_d$ can take on only $|T_D| \prod_{p|d} |T_p|$ different possible congruence classes modulo $dD$. Fixing such a class, $c\in (\Z/Dd\Z)^k$ with $c\pmod D \in T_D$ and $c\pmod p \in T_p$ for $p|d$, the set of elements of $S_d$ congruent to $c$ are simply those tuples with $n_i\leq X$, $\sum_{i=1}^k a_i n_i=N$ and $(n_1,\ldots,n_k)\equiv c \pmod{dD}.$ Therefore, we have that $|S_d|$ is the sum over such $c$ of
\begin{equation}\label{LcSetEqn}
\left| \left\{(n_1,\ldots,n_k) \in \{1,\ldots,X\}^k: \sum_{i=1}^k a_i n_i = N, (n_1,\ldots,n_k) \equiv c \pmod{dD} \right\}\right|.
\end{equation}

Notice that the set of $k$-tuples of integers $n_i$ with $\sum_{i=1}^k a_i n_i = N$ and $(n_1,\ldots,n_k)\equiv c \pmod{dD}$ is an affine lattice within the space $V$ of tuples of real numbers $x_i$ so that $\sum_{i=1}^k a_i x_i = N$. We induce a measure on $V$ from the standard measure on $\R^n$ by putting the measure ${\sum_{i=1}^k a_i dx_i}$ on the quotient. We note that under this measure, $C_\infty$ is the measure of $R:=[0,X]^k \cap V$. The lattice $\Z^k$ has covolume $1$ in $\R^k$. Since the $a_i$ are relatively prime, the image of $\Z^k$ under $(n_1,\ldots,n_k) \rightarrow \sum_{i=1}^k a_i n_i$ is $\Z$. Thus the projection of $\Z^k$ to $\R^k/V$ has covolume $1$. Therefore $\Z^k\cap V$ has covolume $1$ within $V$. Let $L$ be the affine lattice $\Z^k \cap V$. For $c\in T_d \times \prod_{p|d} T_p$, let $L_c$ be the sublattice of $L$ consisting of elements congruent to $c$ modulo $dD$. The covolume of $L_c$ is $(dD)^{k-1}$ times the covolume of $L$, and is thus $(dD)^{k-1}$. Notice that the set in Equation \eqref{LcSetEqn} is exactly $L_c\cap R$. We now try to estimate its size.

Consider a fundamental domain $M$ of $L_c$. We can construct $M$ so that it has diameter $O(d)$. Take the union of translates of $M$ centered at the elements of $L_c\cap R$. It is clear that this produces a set whose symmetric difference with $R$ is contained within the set of points within distance $O(d)$ of the boundary of $R$. It is thus, easy to see that this union has volume $\textrm{Vol}(R) + O(dX^{k-2}+d^{k-1})$. On the other hand the volume of this region equals the covolume of $L_c$ times the number of points in $L_c\cap R$. Thus,
$$
|L_c\cap R| = \frac{C_\infty + O(dX^{k-2}+d^{k-1})}{(dD)^{k-1}}.
$$
Therefore, $|S_d|$ is the sum over $T_D\times \prod_{p|d} T_p$ of this quantity, or
$$
|S_d| = |T_D|\cdot \prod_{p|d} |T_p| (C_\infty + O(d X^{k-2})) (dD)^{1-k}.
$$

In order to obtain proper control on the error term above, we will want to bound the size of $T_p$. For a tuple $(n_1,\ldots,n_k)\in T_p$ at least one of the $n_j$ must be zero modulo $p$. Fixing such an $j$, it must still be the case that $\sum_{i=1}^k a_i n_i \equiv N \pmod{p}$. Unless $a_i=0$ for all $i\neq j$ and $N\equiv 0\pmod{p}$, there are only $p^{k-2}$ such solutions. If $p|N$ and $p|a_i$ for all $i\neq j$, we claim that our proposition holds trivially. In particular, we have that $a_j$ is not divisible by $p$ (since the $a_i$ are relatively prime). Therefore if we have integers $n_i$ with $\sum_{i=1}^k a_i n_i \equiv N \pmod{p}$, then $n_j$ must be divisible by $p$. Therefore $S$ is empty, and $C_p$ is 0, and so both of sides of the equation in question are 0. Hence, we may assume that this is not the case and therefore assume that $|T_p|\leq kp^{k-2}$ for all $p$. Thus the error term above is at most
$$
O\left( \prod_{p|d} kp^{k-2} (d^{2-k}X^{k-2} + 1) \right) = O\left(d^\epsilon (X+d)^{k-2} \right).
$$

While the above bound will prove sufficient for $d\ll X$, we will need a different bound for larger values of $d$. We claim for any $d$ that $|S_d|=O(X^{k-1}d^{\epsilon-1})$. This is because for $(n_1,\ldots,n_k)\in S_d$, we must have some $d_i$ with $d_i|n_i$ and $d=\prod_{i=1}^k d.$ There are $\tau_k(d)=d^{\epsilon}$ ways to pick the $d_i$. For each way of picking the $d_i$, the set of points in $L$ with $d_i|n_i$ for each $i$ forms a lattice of covolume $d$. If any $d_i$ is bigger than $X$, there is nothing to prove. Otherwise, this lattice has a fundamental domain of diameter $O(X)$ and thus extending translates of this fundamental domain around each point in the intersection of this lattice with $R$ yields a figure of volume $O(X^{k-2})$. Thus, the number of such points is $O(X^{k-2}d^{-1})$. Thus, $|S_d|=O(X^{k-2}d^{\epsilon-1})$.

To summarize, we have that for $d\leq X$, we have that
$$
|S_d| = |T_D|\cdot \prod_{p|d} |T_p| C_\infty (dD)^{1-k} + O(X^{k-2+\epsilon}).
$$
And for $d\geq X$, we have that
$$
|S_d| = O(X^{k-1}d^{\epsilon-1}) = |T_D|\cdot \prod_{p|d} |T_p| C_\infty (dD)^{1-k} + O(X^{k-1}d^{\epsilon-1}) .
$$
Thus,
\begin{align*}
|S| = & |T_D| D^{1-k} C_\infty \sum_{d|P(z,D)} \prod_{p|d} \frac{-|T_p|}{p^{k-1}}\\ & + \sum_{d|P(z,D),d\leq X} O(X^{k-2+\epsilon}) + \sum_{d|P(z,D),d\geq X} O(X^{k-1}d^{\epsilon-1}).
\end{align*}
We begin by dealing with the error term above. By Corollary \ref{SmoothNumbersCor}, it is at most
\begin{align*}
& O(X^{k-2+\epsilon}S(z,X)) + \int_{X}^\infty O(X^{k-1})S(z,y)y^{\epsilon-2}dy\\
= & O(X^{k-1+\epsilon-1/(3B)}) + \int_X^\infty O(X^{k-1}) y^{\epsilon-1/(3B)-1}dy\\
= & O(X^{k-1+\epsilon-1/(3B)}).
\end{align*}

Therefore, this term can be safely ignored, and up to acceptable error we may approximate $|S|$ as
\begin{align*}
|T_D| D^{1-k} C_\infty \sum_{d|P(z,D)} \prod_{p|d} \frac{-|T_p|}{p^{k-1}} = \frac{|T_D|}{D^{k-1}} C_\infty \prod_{p|P(z,D)} \left(1-\frac{|T_p|}{p^{k-1}} \right).
\end{align*}
Thus, up to acceptable error, the coefficient in question is equal to
\begin{align*}
&\left(\prod_{i=1}^k \frac{|C_i|}{|G_i|} \right)C_\infty \left( \frac{|T_D|}{D^{k-1}} \right) \left(\prod_{p|D} (1-p^{-1})^{-k} \right)\left(\prod_{p|P(z,D)} \left(1-\frac{|T_p|}{p^{k-1}} \right) (1-p^{-1})^{-k} \right)\\
& = \left(\prod_{i=1}^k \frac{|C_i|}{|G_i|} \right)C_\infty \left( \frac{D|T_D|}{\phi(D)^{k}} \right)\left(\prod_{p|P(z,D)} \frac{p(p^{k-1}-|T_p|)}{(p-1)^k}\right) \\
& = \left(\prod_{i=1}^k \frac{|C_i|}{|G_i|} \right)C_\infty C_D \left(\prod_{p|P(z,D)} C_p\right).
\end{align*}
This completes our proof.
\end{proof}

\subsection{Putting it Together}

We are finally able to prove Theorem \ref{mainThm}
\begin{proof}
Let $B$ be a sufficiently large multiple of $A$, and $z=\log^B(X)$.

For $k\geq 3$ we have that
$$
S_{K_i,C_i,a_i,X}(N) = \int_0^1 H_{K_i,C_i,a_i,X}(\alpha)e(-N\alpha).
$$
By Lemma \ref{HapproxLem} this is
$$
\int_0^1 H_{K_i,C_i,a_i,z,X}^\sharp(\alpha)e(-N\alpha)
$$
up to acceptable errors.  This is the $e(N\alpha)$ coefficient of $H_{K_i,C_i,a_i,z,X}^\sharp(\alpha)$, which by Proposition \ref{HSharpProp} is as desired.

For $k=2$, we let $T_{K_i,C_i,a_i,X}(N)$ be the corresponding right hand side of Equation \ref{asymptEqu}.  It will suffice to show that
$$
\sum_{|n|\leq \sum_i |a_i|X} (S_{K_i,C_i,a_i,X}(N)-T_{K_i,C_i,a_i,X}(N))^2 = O(X^3\log^{-2A}(X)).
$$
If we define the generating function
$$
J_{K_i,C_i,a_i,X}(\alpha) = \sum_{|N|\leq \sum_i |a_i|X} T_{K_i,C_i,a_i,X}(N)e(N\alpha)
$$
we note that the above is equivalent to showing that
$$
|H_{K_i,C_i,a_i,X}-J_{K_i,C_i,a_i,X}|_2 = O(X^{3/2}\log^{-A}(X)).
$$
But by Lemma \ref{HapproxLem}, we have that
$$
|H_{K_i,C_i,a_i,X}-H_{K_i,C_i,a_i,z,X}^\sharp|_2 = O(X^{3/2}\log^{-A}(X)),
$$
and by Proposition \ref{HSharpProp}, we have
$$
|H_{K_i,C_i,a_i,z,X}^\sharp-J_{K_i,C_i,a_i,X}|_2 = O(X^{3/2}\log^{-A}(X)).
$$
This completes the proof.
\end{proof}

\section{Application}\label{appSec}

We present an application of Theorem \ref{mainThm} to the construction of elliptic curves whose discriminants are divisible only by primes with certain splitting properties.

\begin{thm}
Let $K$ be a number field.  Then there exists an elliptic curve defined over $\Q$ so that all primes dividing its discriminant split completely over $K$.
\end{thm}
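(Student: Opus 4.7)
The plan is to use Theorem \ref{mainThm} to produce a tuple of primes, all splitting completely in $K$ and satisfying prescribed congruences, which can then be plugged into an explicit Weierstrass family to yield the desired elliptic curve. First I pass to the Galois closure $\tilde K/\Q$, so that the condition ``$p$ splits completely in $K$'' becomes the Chebotarev condition $[\tilde K/\Q, p] = \{1\}$. To simultaneously control reduction at small primes, I work with the compositum $K^* = \tilde K \cdot \Q(\zeta_M)$ for an auxiliary modulus $M$ (a product of small powers of $2$ and $3$): prescribing a conjugacy class in $\Gal(K^*/\Q)$ that projects to $1 \in \Gal(\tilde K/\Q)$ and to a specified element of $\Gal(\Q(\zeta_M)/\Q) \cong (\Z/M\Z)^*$ is equivalent to imposing both ``splits completely in $K$'' and ``lies in a prescribed residue class modulo $M$.''

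Next I introduce an explicit family of elliptic curves $E_{\mathbf p}/\Q$ parameterized by primes $p_1,\ldots,p_k$ constrained by a linear relation $\sum_i a_i p_i = N$, designed so that the naive discriminant factors as $\pm\, 2^{e_2} 3^{e_3} \prod_i p_i^{f_i}$. A convenient candidate is a Frey-type curve such as $E: y^2 = x(x-p_1)(x+p_2)$ with $p_1 + p_2 = 2p_3$, for which the naive discriminant is $2^{6} (p_1 p_2 p_3)^2$ and the only primes that can appear in the minimal discriminant are $2$, $3$, $p_1$, $p_2$, $p_3$. The role of the cyclotomic conditions is then to eliminate the small primes: running Tate's algorithm against the prescribed congruences of $p_1, p_2 \bmod M$ shows that $E_{\mathbf p}$ acquires good reduction at $2$ and $3$, so that the minimal discriminant of $E_{\mathbf p}$ has prime divisors contained in $\{p_1, p_2, p_3\}$, all of which split completely in $K$.

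Finally, I apply Theorem \ref{mainThm} with $k = 3$, with each $K_i = K^*$, with $C_i$ equal to the conjugacy class encoding ``splits in $K$ and has the designated residue class mod $M$,'' and with $(a_1, a_2, a_3) = (1, 1, -2)$, $N = 0$. The task reduces to verifying that the constants on the right-hand side of Equation \ref{asymptEqu} are all positive: the Chebotarev factors $|C_i|/|G_i|$ are nonzero by construction, $C_\infty$ is positive since the constraint $p_1 + p_2 = 2p_3$ with $p_i \leq X$ has positive-measure solution set, and the local factors $C_D, C_p$ are positive provided the prescribed residue class mod $M$ admits a solution $\sum a_i x_i \equiv 0 \pmod D$ with $x_i \in H_i$ (a check that can be made by exhibiting a single admissible triple, e.g.\ a constant triple). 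The main term therefore dominates the error term, so infinitely many admissible triples $(p_1, p_2, p_3)$ exist, yielding an elliptic curve of the required type.

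The main obstacle is the explicit local analysis at $2$ and $3$: one must pick $M$ and the residue classes so that Tate's algorithm applied to $E_{\mathbf p}$ (after a suitable unimodular change of variables) actually produces good reduction at both small primes. This is a finite but delicate calculation and may require enlarging the Weierstrass family (e.g., to a non-short form with nontrivial $a_1$ or $a_3$, or to a suitable quadratic twist) in order for the required congruences on $p_1, p_2$ to be consistent; none of this, however, is obstructed, and the structure of Theorem \ref{mainThm} (in which arbitrary Chebotarev plus cyclotomic conditions can be combined) is precisely flexible enough to accommodate the needed congruences.
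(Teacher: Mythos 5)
Your outline follows the same high-level structure as the paper's argument---pass to a Galois extension, choose a Weierstrass family parameterized by a triple of primes satisfying a linear relation, and apply Theorem \ref{mainThm} with $k=3$---and the refinements you add (reducing explicitly to the Galois closure $\tilde K$, which the paper only gestures at, and packaging congruence conditions into the cyclotomic compositum $K^*=\tilde K\cdot\Q(\zeta_M)$) are both correct and in the spirit of the theorem. However, the central step, eliminating the small primes $2$ and $3$ from the discriminant, fails for the specific family you propose, and you have not supplied a replacement.

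Your model $E:\, y^2=x(x-p_1)(x+p_2)$ with $p_1+p_2=2p_3$ has discriminant $\Delta = 16\,p_1^2p_2^2(p_1+p_2)^2 = 64(p_1p_2p_3)^2$, so for odd $p_i$ one has $v_2(\Delta)=6$ exactly. But the discriminant of a Weierstrass model changes by a factor $u^{12}$ under an admissible change of coordinates, so $v_2$ of the minimal discriminant is congruent to $v_2(\Delta)$ modulo $12$ and is at most $6$; hence $v_2(\Delta_{\min})=6$ for \emph{every} such triple. In other words, this curve has bad reduction at $2$ unconditionally, and no choice of residue classes for $p_1,p_2\bmod M$ can repair this---the assertion that ``running Tate's algorithm against the prescribed congruences$\ldots$ shows that $E$ acquires good reduction at $2$'' is simply false here. (More generally, any model $y^2=(x-e_1)(x-e_2)(x-e_3)$ with integer $e_i$ has $v_2(\Delta)\ge 6$, since two of the $e_i$ must be congruent modulo $2$; a curve with full rational $2$-torsion \emph{can} have odd conductor, e.g.\ 15a1, but then its $2$-torsion $x$-coordinates in a minimal model are not all integral.) You flag the local analysis at $2$ and $3$ as ``delicate'' and suggest enlarging the family, but you do not exhibit a family and relation for which the discriminant's $2$- and $3$-parts actually vanish, so this is a genuine gap rather than a routine verification. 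The paper sidesteps the issue entirely by choosing the coefficients $A=pq/4$, $B=Dpq^2$ so that the computed discriminant $-p^2q^3(p+432D^2q)$ has no visible $2$- or $3$-part, and by taking the auxiliary constant equal to $D$ so that the condition $\sum a_i x_i\equiv 0$ collapses to $x_1\equiv x_3$, making the positivity of $C_D$ and each $C_p$ immediate. Relatedly, your positivity check via ``a constant triple'' will not be available as stated once the cyclotomic conditions force $H_1\neq H_2$, so that step also needs to be redone once a working family is chosen.
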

\begin{proof}
We begin by assuming that $K$ is a normal extension of $\Q$.  We will choose an elliptic curve of the form:
$$
y^2 = X^3 + AX + B.
$$
Here we will let $A=pq/4$, $B=npq^2$ where $n$ is a small integer and $p,q$ are primes that split over $K$.  The discriminant is then
\begin{align*}
-16(4A^3+27B^3) & = -64p^3q^3/64 - 432n^2p^2q^4\\
& = -p^2 q^3 (p + 432 n^2 q).
\end{align*}
Hence it suffices to find primes $p,q,r$ that split completely over $K$ with $ p + 432n^2 q - r = 0$.  We do this by applying Theorem \ref{mainThm} with $k=3$, $K_i=K$, $C_i = \{e\}$, and $X$ large.  As long as $C_D>0$ and $C_p>0$ for all $p$, the main term will dominate the error and we will be guaranteed solutions for sufficiently large $X$.   If $n=D$, this will hold.  This is because for $C_D$ to be non-zero we need to have solutions $n_1 + 0 n_2 - n_3 \equiv 0 \pmod D$ with $n_i$ all in some particular subgroup of $(\Z/D\Z)^*$.  This can clearly be satisfied by $n_1=n_3$.  For $p=2$, $C_p$ is non-zero since there is a solution to $n_1 + 0n_2 - n_3 \equiv 0\pmod 2$ with none of the $n_i$ divisible by 2 (take $(1,1,1)$).  For $p>2$, we need to show that there are solutions to $n_1 + 432D^2 n_2 - n_3 \equiv 0\pmod p$ with none of the $n_i$ 0 modulo $p$.  This can be done because after picking $n_2$, any number can be written as a difference of non-multiples of $p$.
\end{proof}

\section{Acknowledgements}

This work was done with the support of an NDSEG graduate fellowship.


\begin{thebibliography}{[9]}

\bibitem{ExpSumsOverPrimesInArithmeticProgressions} Antal Balog, Alberto Perelli, \emph{Exponential Sums Over Primes in an Arithmetic Progression} Proc. of the {AMS}, Vol. 93 no. 4 (1985) pp. 578-582.

\bibitem{PrimesInProgressionsSums} Karin Halupczok \emph{On the Ternary {G}oldbach Problem with Primes in Arithmetic Progressions Having a Common
Modulus} Journal de Th\'{e}orie des Nombres de Bordeaux, Vol. 21 no. 1, (2009) pp. 203-213.

\bibitem{IK} Henryk Iwaniec, Emmanuel Kowalski, \emph{Analytic Number Theory}, American Mathematical Society, 2004.

\bibitem{DensityGoldbach} Hongze Li and Hao Pan \emph{A Density Version of Vinogradov's Three Primes Theorem} Forum Math. Vol. 22 no. 4 (2010) pp. 699-714.

\bibitem{PrimesInProgressionsOneHalf} Jianya Liu and Tao Zhan \emph{The Ternary Goldbach Problem in Arithmetic Progressions} Acta Arith. Vol. 82 no. 3 (1997) pp. 197-227.

\bibitem{PrimesInProgressionsSums2} Xianmeng Meng \emph{A Mean Value Theorem on the Binary {G}oldbach Problem and its Application} Monatsh. Math. Vol. 151 no. 4 (2007) pp. 319-332.

\bibitem{milne} J.S. Milne \emph{Class Field Theory} manuscript available at http://www.jmilne.org/math/CourseNotes/CFT.pdf.

\bibitem{PrimesInFields1} Takumi Noda \emph{On the {G}oldbach Problem in Algebraic Number Fields and the Positivity of the Singular Integral} Vol. 20 no. 1 (1997) pp. 8-21.

\bibitem{IdealNorm} R. W. K. Odoni \emph{On the number of integral ideals of given norm and ray-class}, Mathematika Vol. 38 no. 1 (1991), pp 185-190.


\bibitem{PrimesInFields2} M. I. Tuljaganova \emph{The {E}uler-{G}oldbach Problem for an Imaginary Quadratic Field} Izv.
Akad. Nauk UzSSR Ser. Fiz.-Mat. Nauk, Vol. 7 no. 1 (1963) pp. 11-17.

\bibitem{SmallPrime} Tao Zhan, \emph{A Generalization of the Goldbach-Vinogradov Theorem} Acta Arith. Vol. 71 no. 2 (1995) pp. 95-106.

\bibitem{ClosePrimeSum} Tao Zhan, \emph{On the Representation of a Large Odd Integer as the Sum of Three Almost Equal Primes}, Acta Mathematica Sinica, New Series Vol. 7 no. 3 (1991) pp. 259--272.

\bibitem{PrimesInProgressionsSums0} Achim Zulauf \emph{Beweis einer Erweiterung des Satzes von Goldbach-Vinogradov (German)} J. reine angew Math., Vol. 190 (1952) pp. 169-198.

\end{thebibliography}
\end{document}